\theoremstyle{plain}
\newtheorem{theorem}{\protect\theoremname}[section]
\theoremstyle{definition}
\newtheorem{definition}[theorem]{\protect\definitionname}
\theoremstyle{plain}
\newtheorem{proposition}[theorem]{\protect\propositionname}
\theoremstyle{definition}
\newtheorem{remark}[theorem]{\protect\remarkname}
\theoremstyle{plain}
\newtheorem{corollary}[theorem]{\protect\corollaryname}
\theoremstyle{plain}
\newtheorem{lemma}[theorem]{\protect\lemmaname}
\theoremstyle{definition}
\newtheorem{example}[theorem]{\protect\examplename}
\theoremstyle{plain}
\newtheorem{question}[theorem]{\protect\questionname}
\numberwithin{equation}{section}
\DeclareMathOperator{\Trim}{Trim}
\DeclareMathOperator{\id}{id}
\DeclareMathOperator{\PI}{PI}
\DeclareMathOperator{\Inv}{Inv}
\DeclareMathOperator{\Co}{Co}
\setlist[enumerate,1]{label = (\alph*)}
\providecommand{\corollaryname}{Corollary}
\providecommand{\definitionname}{Definition}
\providecommand{\examplename}{Example}
\providecommand{\lemmaname}{Lemma}
\providecommand{\propositionname}{Proposition}
\providecommand{\questionname}{Question}
\providecommand{\remarkname}{Remark}
\providecommand{\theoremname}{Theorem}
\begin{document}
\title{Partitions of primitive Boolean spaces}
\author{Andrew B. Apps}
\date{February 2025}

\address{Independent researcher\\St Albans, UK}
\email{andrew.apps@apps27.co.uk}
\thanks{I gratefully acknowledge the use of Cambridge University's library facilities.}
\keywords{Primitive space, Boolean space, Stone space, PO system, trim partition, rank diagram, ideal completion}
\begin{abstract}
A Boolean ring and its Stone space (Boolean space) are \emph{primitive}
if the ring is disjointly generated by its \emph{pseudo-indecomposable}
(PI) elements. Hanf showed that a primitive PI Boolean algebra can
be uniquely defined by a \emph{structure diagram}. In a previous paper
we defined \emph{trim $P$-partitions} of a Stone space, where $P$
is a \emph{PO system} (poset with a distinguished subset), and showed
how they provide a physical representation within the Stone space
of these structure diagrams\emph{.}

In this paper we study the class of trim partitions of a fixed primitive
Boolean space, which may not be compact, and show how they can be
structured as a quasi-ordered set via an appropriate refinement relation.
This refinement relation corresponds to a surjective morphism of the
associated PO systems, and we establish a quasi-order isomorphism
between the class of well-behaved partitions of a primitive space
and a class of \emph{extended PO systems}. 

We also define \emph{rank partitions}, which generalise the \emph{rank
diagrams} introduced by Myers, and the \emph{ideal completion} of
a trim $P$-partition, whose underlying PO system is the ideal completion
of $P$, and show that rank partitions are just the ideal completions
of trim partitions. In the process, we extend a number of existing
results regarding primitive Boolean algebras or compact primitive
Boolean spaces to locally compact Boolean spaces. 
\end{abstract}
\maketitle
\section{Introduction}

Primitive Boolean algebras were introduced by Hanf~\cite{Hanf} as
a class of well-behaved Boolean algebras, and they and their Stone
spaces have been studied using a range of topological and algebraic
methods. In a previous paper~\cite{Apps-Stone} we defined \emph{trim
$P$-partitions} of a locally compact primitive Boolean space (i.e.
the Stone space of a Boolean ring, which may not have a $1$), where
$P$ is a PO system (poset with a distinguished subset), and showed
how they correspond to and provide a physical representation within
the Stone space of Hanf's ``structuring functions\emph{'', }with
``trim'' sets providing a topological equivalent of the algebraic
concept of a pseudo-indecomposable set\emph{.} Thus a second countable
locally compact Stone space (which we term an \emph{$\omega$-Stone
space}) admits a trim partition iff it is primitive. 

This paper continues the study of trim partitions of primitive Boolean
spaces, and the interplay with the order properties of associated
PO systems. We establish a quasi-order isomorphism between the class
of well-behaved trim partitions of a primitive space, ordered by a
``regular refinement'' relation, and a class of extended PO systems,
ordered by surjective morphisms. We also define \emph{rank partitions},
which generalise the \emph{rank diagrams} defined by Myers, and the
\emph{ideal completion} of a trim $P$-partition, whose underlying
PO system is the ideal completion of $P$, and show that rank partitions
are just the ideal completions of trim partitions. In the process,
we extend several existing results regarding primitive Boolean algebras
or compact primitive Boolean spaces to locally compact Boolean spaces.

An element $A$ of a Boolean ring $R$ (which may or may not have
a $1$ and whose Stone space therefore may or may not be compact)
is \emph{pseudo-indecomposable (PI)} if for all $B\in(A)$, either
$(B)\cong(A)$ or $(A-B)\cong(A)$, where $(A)=\{C\in R\mid C\subseteq A\}$;
and $R$ and its Stone space are \emph{primitive }if every element
of $R$ is the disjoint union of finitely many PI elements. 

If $W$ is a primitive $\omega$-Stone space, we can associate with
it a canonical PO system $\mathscr{S}(W)$, whose elements are the
homeomorphism classes of the PI compact open subsets of $W$, with
a suitable relation. Hanf showed that a countable primitive PI Boolean
algebra is determined up to isomorphism by this associated canonical
PO system, which ``structures'' the algebra.

We can further define a canonical trim $\mathscr{S}(W)$-partition
$\mathscr{X}(W)$ of $W$. For $p\in\mathscr{S}(W)$, the associated
partition element $X_{p}$ contains all points of $W$ that have a
neighbourhood base of compact opens homeomorphic to $p$, and the
order relation on $\mathscr{S}(W)$ satisfies 

\begin{equation}
X_{p}^{\prime}\cap X=\bigcup_{q<p}X_{q},\label{eq:P-partition}
\end{equation}

where $X_{p}^{\prime}$ denotes the limit points of $X_{p}$ and $X=\bigcup_{q\in\mathscr{S}(W)}X_{q}$.
We note that $X$ contains precisely the \emph{homogeneous points}
defined by Pierce~\cite[Section~3.2]{PierceMonk}, and is a dense
subset of $W$.

The results of Williams~\cite{Touraille primitive} and others, translated
into the language of Stone space partitions, mean that if $W$ is
a compact primitive PI $\omega$-Stone space then $\mathscr{S}(W)$
is a simple ``diagram'' (PO system with a minimal element such that
any morphism from it to another PO system is injective); and if $Q$
is any other countable diagram whose simple image is isomorphic to
$\mathscr{S}(W)$, then $W$ admits a trim $Q$-partition. So compact
primitive PI $\omega$-Stone spaces can be classified by countable
simple PO systems with a minimal element.

In order to extend this classification result to spaces that may be
neither compact nor PI, we need to consider \emph{extended PO systems}
$(P,L,f)$ and trim $(P,L,f)$-partitions, where $L\subseteq P$ defines
which partition elements are relatively compact, and the function
$f$ specifies the size of the finite partition elements. We showed
in~\cite{Apps-Stone} that if we restrict to \emph{bounded} $\omega$-Stone
spaces~$W$, where the union of the relatively compact elements of
$\mathscr{X}(W)$ is relatively compact, then $W$ is determined up
to homeomorphism by the extended PO system associated with $\mathscr{X}(W)$.
Hence we can classify bounded $\omega$-Stone spaces in terms of extended
PO systems $(P,L,f)$ for which $P$ is simple.

We develop this further in Section~\ref{sec:Trim partitions} by
considering the class of trim partitions of a general fixed primitive
Boolean space. We are interested in when one trim partition ``regularly''
refines or consolidates another in a way that respects the trim sets,
and it turns out that trim partitions that regularly consolidate a
trim $Q$-partition correspond to surjective morphisms $\alpha\colon Q\rightarrow P$
for some PO system $P$. For bounded spaces $W$, this yields an isomorphism
between the set of homeomorphism classes of ``bounded'' trim partitions
of $W$ and the set of isomorphism classes of extended PO systems
that arise for such partitions, both structured as quasi-ordered systems.
We also identify a necessary and sufficient condition on a PO system
$Q$ for there to be a trim $Q$-partition that regularly refines
a given bounded trim partition of a space.

In Section~\ref{sec:Complete-partitions} we turn to complete ``semi-trim''
$P$-partitions of Stone spaces, which extend a trim partition to
cover the entire space by adding its ``limit points'', so that~(\ref{eq:P-partition})
is still satisfied with $X=W$. Myers~\cite{Myers} defined a \emph{rank
diagram }of a Stone space and showed that a compact $\omega$-Stone
space has a rank diagram iff it is primitive and PI, and that every
such space is ranked by its \emph{orbit diagram} (the set of orbits
in $W$ under homeomorphisms of $W$, with an associated relation).
We extend Myers' definition and results to \emph{rank partitions}
of general primitive spaces, and make precise the link between the
rank diagrams of Myers and trim and semi-trim partitions (and hence
with the structure diagrams of Hanf). Specifically, we identify conditions
on a PO system $P$ for a complete semi-trim $P$-partition to be
a rank partition; we define the \emph{ideal completion} of a trim
$P$-partition, whose underlying PO system is the ideal completion
of $P$; and show that rank partitions correspond precisely to ideal
completions of trim partitions. Hansoul~\cite{Hansoul} had previously
shown this for the special case where $W$ is compact and PI and $P$
is the canonical PO system for~$W$: in this instance the ideal completion
of the canonical trim partition of $W$ is just the orbit diagram
of $W$.

Finally, we extend to locally compact spaces results firstly of Hansoul
regarding the smallest complete topological Boolean sub-algebra (\emph{prime
TBA}) of $2^{W}$ for primitive~$W$, and secondly of Touraille characterising
primitive spaces by means of an algebraic property of the associated
Heyting{*}-algebra.

We follow most subsequent authors after Hanf in dropping the requirement
that a primitive Boolean algebra be PI\@. We refer to~\cite{Hansoul3}
for a brief overview of existing work on primitive Boolean algebras
and to~\cite[Section~3]{PierceMonk} for a detailed exposition.

\section{\label{sec:definitions}Trim partitions: definitions and basic properties}
\begin{definition}[\emph{PO systems}]
We recall that a \emph{PO system }is a set $P$ with an anti-symmetric
transitive relation $<$; equivalently, it is a poset with a distinguished
subset $P_{1}=\{p\in P\mid p<p\}$. We write $p\leqslant q$ to mean
$p<q$ or $p=q$; and $P^{d}$ for $\{p\in P\mid p\nless p\}$. If
$P$ is a poset, we write $p\lneqq q$ if $p\leqslant q$ and $p\neq q$.

Let $P$ be a poset or PO system and let $Q\subseteq P$. We write
$Q_{\downarrow}=\{p\in P\mid p\leqslant q\text{ for some }q\in Q\}$,
$Q_{\uparrow}=\{p\in P\mid p\geqslant q\text{ for some }q\in Q\}$,
and write $P_{\min}$ and $P_{\max}$ for the sets of minimal and
maximal elements of $(P,\leqslant)$ respectively. We write $\widetilde{P}$
for the PO system with the same elements as $P$ but with the reversed
order relation. For $p\in P$, we write $p_{\downarrow}$ and $p_{\uparrow}$
for $\{p\}_{\downarrow}$ and $\{p\}_{\uparrow}$ respectively.

We recall that $Q$ is an \emph{upper} (respectively \emph{lower})
subset of $P$ if for all $q\in Q$, whenever $r\geqslant q$ (respectively
$r\leqslant q$) then $r\in Q$.

We will say that $F\subseteq P$ is a \emph{finite foundation} of
$Q$ if $F$ is a finite subset of $Q_{\downarrow}$ such that for
all $r\in Q_{\downarrow}$ we can find $p\in F$ such that $p\leqslant r$
(equivalently, $Q_{\downarrow}\subseteq F_{\uparrow}$).

We recall that $J\subseteq P$ is an \emph{ideal} of $P$ if $J$
is a lower subset such that for all $x,y\in J$, we can find $z\in J$
such that $x\leqslant z$ and $y\leqslant z$; and that $J$ is a
\emph{principal ideal of $P$} if $J=p_{\downarrow}$ for some $p\in P$.
If $P$ is countable, the non-principal ideals of $P$ have form $J=\{p\in P\mid p\leqslant q_{n}\text{ for some }n\}$,
where $\{q_{n}\mid n\geqslant1\}$ is a strictly increasing sequence
in $P$. 
\end{definition}

\emph{Notation}: If $\{X_{p}\mid p\in P\}$ is a partition of a subset
$X$ of $W$, so that each $X_{p}\neq\emptyset$, we will write $\{X_{p}\mid p\in P\}^{*}$
for the partition $\{\{X_{p}\mid p\in P\},W-X\}$ of $W$, with the
convention that $W-X$ may be the empty set. The asterisk will be
omitted where a complete partition of $W$ is intended (i.e.\ where
$X=W$). 

For $Q\subseteq P$, we write $X_{Q}$ for $\bigcup_{p\in Q}X_{p}$.

If $W$ is a topological space and $S\subseteq W$, we write $S^{\prime}$
for the \emph{derived set }of $S$, namely the set of all limit points
of $S$, and $\overline{S}$ for the closure of $S$, so that $\overline{S}=S\cup S^{\prime}$.

We recall the following definitions from~\cite{Apps-Stone}.
\begin{definition}[\emph{Partitions}]
Let $W$ be a Stone space, $(P,<)$ a PO system, and $\mathscr{X}=\{X_{p}\mid p\in P\}^{*}$
a partition of $W$; let $X=X_{P}$. 

We will say that $\mathscr{X}$ is a \emph{$P$-partition }of $W$
if $X_{p}^{\prime}\cap X=\bigcup_{q<p}X_{q}$ for all $p\in P$.

We define the \emph{label function} $\tau_{\mathscr{X}}\colon X\rightarrow P$
by $\tau_{\mathscr{X}}(w)=p$ for $w\in X_{p}$, and write $\tau$
for $\tau_{\mathscr{X}}$ when there is no ambiguity.

For a subset $Y$ of $W$, we define its \emph{type }$T_{\mathscr{X}}(Y)=\{p\in P\mid Y\cap X_{p}\neq\emptyset\}$,
and write $T$ for $T_{\mathscr{X}}$ when there is no ambiguity. 

A compact open subset $A$ of $W$ is \emph{$p$-trim,} and we write
$t(A)=p$, if $T(A)=p_{\uparrow}$, with additionally $|A\cap X_{p}|=1$
if $p\in P^{d}$. A set is \emph{trim }if it is $p$-trim for some
$p\in P$. We write $\Trim(\mathscr{X})$ for the set of trim subsets
of $W$, $\Trim_{p}(\mathscr{X})$ for $\{A\in\Trim(\mathscr{X})\mid t(A)=p\}$
and $\widehat{P}$ for $\{t(A)\mid A\in\Trim(\mathscr{X})\}$. 

For $w\in W$, let $V_{w}=\{A\subseteq W\mid w\in A\wedge A\in\Trim(\mathscr{X})\}$
and let $I_{w}=\{t(A)\mid A\in V_{w}\}$, being the trim neighbourhoods
of $w$ and their types.

We say that $w\in W$ is a \emph{clean point }if $w\in X_{p}$ for
some $p$ and has a $p$-trim neighbourhood, and is otherwise a \emph{limit
point}.

A $P$-partition of $W$ is \emph{complete} if $X=W$.

A $P$-partition $\mathscr{X}$ of $W$ is a \emph{trim }$P$-partition
if it also satisfies:
\begin{description}
\item [{T1\label{T1}}] Every element of $W$ has a neighbourhood base
of trim sets (this implies that $X$ is dense in $W$);
\item [{T2\label{T2}}] The partition is \emph{full}: that is, for each
$p\in\widehat{P}$, every element of $W$ with a neighbourhood base
of $p$-trim sets is an element of $X_{p}$;
\item [{T3\label{T3}}] All points in $X$ are clean. 
\end{description}
If $\{Y_{p}\mid p\in P\}^{*}$ is a $P$-partition of the Stone space
$Z$, we say that $\alpha\colon W\rightarrow Z$ is a \emph{$P$-homeomorphism}
if it is a homeomorphism such that $X_{p}\alpha=Y_{p}$ for all $p\in P$. 
\end{definition}

If $\mathscr{X}$ is a $P$-partition, it is easy to see that $T(A)$
is an upper subset of $P$ for $A\in R$. Moreover, if $A\in\Trim_{p}(\mathscr{X})$,
$w\in X_{p}$, $B\in R$ and $w\in B\subseteq A$, then $B\in\Trim_{p}(\mathscr{X})$,
and so $w$ has a neighbourhood base of $p$-trim sets. We will also
use the following facts (\cite[Propositions~2.13,~4.2]{Apps-Stone})
repeatedly:
\begin{proposition}[Trim partitions]
\label{basic properties-2 trim}Let $P$ be a PO system and $\mathscr{X}=\{X_{p}\mid p\in P\}^{*}$
a trim $P$-partition of the Stone space $W$ of the countable Boolean
ring $R$.
\begin{description}
\item [{TP1\label{TP1}}] If $w\in W$, then $I_{w}$ is an ideal of $P$;
\item [{TP2\label{TP2}}] $w\in W$ is a clean point iff $I_{w}$ is a
principal ideal of $P$;
\item [{TP3\label{TP3}}] If $J$ is an ideal of $P$ and $B\in\Trim(\mathscr{X})$
with $t(B)\in J$, then we can find $w\in B$ such that $I_{w}=J$;
\item [{TP4\label{TP4}}] $P$ is countable;
\item [{TP5\label{TP5}}] If $A\in R$, there is a finite partition of
$A$ containing precisely one $p$-trim set for each $p\in F-P^{d}$,
and containing $|A\cap X_{p}|$ $p$-trim sets for $p\in F\cap P^{d}$,
where $F=T(A)_{\min}$;
\item [{TP6\label{TP6}}] If $A,B\in R$ are both $p$-trim, then $(A)\cong(B)$
and $A$ and $B$ are PI\@.
\end{description}
\end{proposition}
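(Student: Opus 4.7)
The six properties are a package of foundational facts about trim $P$-partitions. I would establish them in dependency order, doing the combinatorial facts TP4, TP1, TP2 and TP5 first and leaving the realisation claim TP3 and the homeomorphism claim TP6 for last; TP6 is the main obstacle.

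For TP4, each $X_p$ is nonempty, and T3 gives a clean point of $X_p$ with a $p$-trim neighbourhood; thus $\widehat{P} = P$ and choosing one representative per $p$ embeds $P$ into the countable ring $R$. Directedness of $I_w$ in TP1 uses T1: given $A, B \in V_w$ of types $p, q$, any trim $C \in V_w$ refining $A \cap B$ has $t(C) \in T(A) \cap T(B) = p_\uparrow \cap q_\uparrow$, giving a common upper bound. The lower-set half is more delicate: given $q < p \in I_w$ with $p$-trim $A \ni w$, I would take a $q$-trim set somewhere in $W$ (available since $q \in \widehat{P} = P$), shrink it by T1 to a $q$-trim $D$ disjoint from $A$ (adjusting the single-point clause when $q \in P^d$), and form $A \cup D$, which is $q$-trim because $T(A \cup D) = p_\uparrow \cup q_\uparrow = q_\uparrow$. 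For TP2, a clean $w \in X_p$ has a $p$-trim $A \in V_w$, and any trim $C \in V_w$ refined into $A$ must satisfy $p \in T(C) \subseteq p_\uparrow$, hence $t(C) = p$; so $I_w \subseteq p_\downarrow$, with equality by the lower-set property. Conversely, $I_w = p_\downarrow$ produces a $p$-trim base at $w$ by the same argument, and T2 (fullness) then forces $w \in X_p$. TP5 is a straightforward induction on $|T(A)_{\min}|$, using T1 to extract a trim piece of each minimal type and splitting at $P^d$ into the required single-point pieces.

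The substantive claims are TP3 and TP6. For TP3 with $J$ principal, say $J = p_\downarrow$, pick any point of $B \cap X_p$ (nonempty since $p \in T(B)$) and apply T3 and TP2. For non-principal $J = \bigcup_n (q_n)_\downarrow$ I would recursively build nested trim $B \supseteq B_1 \supseteq B_2 \supseteq \cdots$ with $t(B_n) \geq q_n$, at each stage applying the lower-set construction from TP1 inside $B_{n-1}$ followed by T1 refinement, so that any $w$ in the (nonempty by compactness) intersection $\bigcap_n B_n$ has $I_w = J$: the containment $J \subseteq I_w$ comes from the $B_n$'s themselves, and $I_w \subseteq J$ from the fact that any trim $A \in V_w$ is eventually contained in some $B_n$, forcing $t(A) \geq t(B_n) \in J$. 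The hard part is TP6: showing $(A) \cong (B)$ for two $p$-trim sets via a Hanf-style back-and-forth, enumerating elements of $R$ and recursively matching trim decompositions given by TP5 on both sides while pairing sub-pieces by type. Countability (TP4) and a well-founded induction on the types appearing under $A$ and $B$ ensure coherent termination. The PI conclusion then follows from TP6 combined with TP5: for any nonempty $A_0 \in (A)$, the type union $T(A_0) \cup T(A - A_0) = p_\uparrow$ forces at least one of $A_0, A - A_0$ to contain a $p$-trim sub-piece, and a further back-and-forth matching of the two trim decompositions (of $A_0$ versus $A$, or of $A - A_0$ versus $A$) extends this to the required isomorphism $(A_0) \cong (A)$ or $(A - A_0) \cong (A)$.
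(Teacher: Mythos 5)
You should first be aware that the paper does not prove this proposition at all: it is imported from \cite[Propositions~2.13,~4.2]{Apps-Stone}, so there is no in-paper argument to compare against. Judged on its own terms, your outline is sound for TP4, TP1, TP2 and TP5, and your reduction of the PI claim to the isomorphism half of TP6 is correct (indeed simpler than you make it: if $p\in T(A_{0})$ then $T(A_{0})$ is an upper set squeezed between $p_{\uparrow}$ and itself, and the single-point clause is inherited from $A$, so $A_{0}$ is already $p$-trim --- no further back-and-forth is needed). TP6 itself remains only a statement of intent, since the whole difficulty is the transfer lemma that a trim decomposition of one $p$-trim set can be matched, type for type and with the correct multiplicities over $P^{d}$, inside another; ``well-founded induction on the types'' is not the mechanism. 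But the genuine gap is in the non-principal case of TP3.

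There you build nested trim sets $B\supseteq B_{1}\supseteq B_{2}\supseteq\cdots$ with $t(B_{n})\geqslant q_{n}$ and take $w\in\bigcap_{n}B_{n}$. The inclusion $J\subseteq I_{w}$ is fine, but the argument for $I_{w}\subseteq J$ --- ``any trim $A\in V_{w}$ is eventually contained in some $B_{n}$, forcing $t(A)\geqslant t(B_{n})\in J$'' --- fails twice. First, the containment is unjustified and generally false: $\bigcap_{n}B_{n}$ is a nonempty compact set that need not be a singleton, and the $B_{n}$ need not form a neighbourhood base at $w$, so an arbitrary trim neighbourhood $A$ of $w$ need not lie in any $B_{n}$. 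Second, even granted $A\subseteq B_{n}$, the inequality $t(A)\geqslant t(B_{n})$ points the wrong way: $J$ is a \emph{lower} set, so lying above an element of $J$ proves nothing. Concretely, if $r$ is an upper bound of $J$ with $r\notin J$, then every $B_{n}$ meets $X_{r}$, and nothing in your construction stops $\bigcap_{n}B_{n}$ from containing a clean point $w\in X_{r}$, for which $I_{w}=r_{\downarrow}\supsetneqq J$. The repair is the standard diagonalisation: fix an enumeration $\{A_{k}\}$ of $R$ and at stage $k$ require additionally that $B_{k}\subseteq A_{k}$ or $B_{k}\cap A_{k}=\emptyset$ (possible since $B_{k-1}\cap X_{r}\neq\emptyset$ for a suitable $r$, hence one of $B_{k-1}\cap A_{k}$, $B_{k-1}-A_{k}$ contains an $r$-trim set), and require $t(B_{k})\in J$ --- not merely $t(B_{k})\geqslant q_{k}$, which your invariant permits to fail; use directedness of $J$ to dominate both $t(B_{k-1})$ and $q_{k}$ within $J$. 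Then $\bigcap_{n}B_{n}$ is a single point $w$ and every trim $A\ni w$ \emph{contains} some $B_{n}$, giving $t(A)\leqslant t(B_{n})\in J$ and hence $t(A)\in J$ because $J$ is a lower set.
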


\section{\label{sec:Trim partitions}Primitive spaces and trim partitions}

If $R$ is a Boolean ring and $A\in R$, we write $(A)$ for $\{C\in R\mid C\subseteq A\}$,
the ideal of $R$ generated by $A$; $[A]$ for the isomorphism class
of the ideal $(A)$; $0$ for the zero element; and $1_{R}$ for the
multiplicative identity if $R$ is a Boolean algebra. If $W$ is a
Boolean space, we write $\Co(W)$ for the Boolean ring of compact
open subsets of $W$.
\begin{definition}[\emph{Boolean rings}]
If $R$ is a Boolean ring with Stone space $W$, we say that $A\in R$
is \emph{pseudo-indecomposable (PI)} if for all $B\in R$ such that
$B\subseteq A$, either $(B)\cong(A)$ or $(A-B)\cong(A)$; and that
$R$ and $W$ are \emph{primitive }if every element of $R$ is the
disjoint union of finitely many PI elements. We will say that $W$
is pseudo-indecomposable if it is compact and $1_{R}$ is PI\@.

An \emph{$\omega$-Stone space} is a Stone space arising from a countable
Boolean ring.
\end{definition}

\begin{remark}
We do not require the Boolean ring to have a $1$, and if it does
have a $1$ we do not require this to be PI\@.
\end{remark}

The importance of trim partitions stems from the following result;
we note that the ``if'' statement is a consequence of TP5 and TP6,
while ``only if'' is immediate from Theorem~\ref{Thm:canonical trim}
below.
\begin{theorem}[{Apps~\cite[Theorem~3.12]{Apps-Stone}}]
\label{Primitive=00003Dtrim}An $\omega$-Stone space is primitive
iff it admits a trim $P$-partition for some PO system $P$.
\end{theorem}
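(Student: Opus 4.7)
The plan is to prove the biconditional by handling the two implications separately, since they are of quite different character.

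For the \emph{if} direction, I would take a trim $P$-partition $\mathscr{X}$ of $W$ and directly apply the two cited facts to an arbitrary $A\in R=\Co(W)$: TP5 yields a finite partition of $A$ into $p$-trim pieces indexed by $F=T(A)_{\min}$ (with multiplicity $|A\cap X_p|$ at the points $p\in F\cap P^d$), and TP6 then asserts that each piece is PI. This exhibits $A$ as a finite disjoint union of PI elements, which is exactly the defining property of primitivity, so no further work is needed here.

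For the \emph{only if} direction, given a primitive $\omega$-Stone space $W$, the natural witness is the canonical trim partition $\mathscr{X}(W)$ indexed by $\mathscr{S}(W)$, defined via homeomorphism classes of PI compact opens as sketched in the introduction: for each such class $p$, $X_p$ consists of the points of $W$ admitting a neighbourhood base of compact opens whose class is $p$. My plan is to invoke Theorem~\ref{Thm:canonical trim}, which carries out this construction and verifies the trim axioms. The substantive work deferred to that theorem is: showing that each candidate class $X_p$ is non-empty and that $X=X_{\mathscr{S}(W)}$ is dense in $W$; defining the order relation on $\mathscr{S}(W)$ so that $X_p'\cap X=\bigcup_{q<p}X_q$ holds; and verifying T1--T3.

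I expect the main obstacle to be axiom T1 together with the denseness of $X$: given a compact open neighbourhood $A$ of an arbitrary $w\in W$, primitivity furnishes a finite decomposition of $A$ into PI pieces, and the heart of the argument will be an iterative refinement that uses this decomposition repeatedly to locate a PI compact open neighbourhood of $w$ whose homeomorphism class is minimal in the induced order, thereby producing a trim neighbourhood of $w$. The remaining axioms T2 and T3 and the identification of the order relation should follow routinely from the definition of $X_p$ and basic properties of PI elements. Once Theorem~\ref{Thm:canonical trim} is established, the existence of $\mathscr{X}(W)$ delivers the \emph{only if} direction at once.
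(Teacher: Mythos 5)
Your proposal is correct and matches the paper's own treatment exactly: the paper likewise derives the \emph{if} direction by combining TP5 (finite decomposition of any compact open into trim sets) with TP6 (trim sets are PI), and obtains \emph{only if} as an immediate consequence of Theorem~\ref{Thm:canonical trim}, whose proof carries out precisely the iterative-refinement construction you anticipate for locating trim neighbourhoods and verifying T1--T3. No gaps.
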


In this section we consider the class of trim partitions of a fixed
$\omega$-Stone space and the inter-relationships between them. As
we shall see, the inter-relationships are underpinned by morphisms
between the corresponding PO systems, which we now define.

\subsection{Morphisms and simple images of PO systems}

The following definitions arise in the work of Hanf~\cite{Hanf}
and Pierce~\cite{PierceMonk}. For reasons explained below, however,
we have adopted the opposite order convention to~\cite{PierceMonk}.
\begin{definition}[\emph{Morphisms and congruence relations}]
A \emph{morphism }$\alpha\colon Q\rightarrow P$ between PO systems
$Q$ and $P$ is a map $\alpha$ such that for each $q\in Q$, $\{r\in Q\mid r>q\}\alpha=\{p\in P\mid p>q\alpha\}$.

A \emph{congruence relation} $\sim$ on a PO system $Q$ is an equivalence
relation such that for all $q,r,s\in Q$, if $q<r$ and $q\sim s$
then there is $t\in Q$ such that $r\sim t$ and $s<t$. We write
$[q]$ for the equivalence class containing $q$.

The PO system $Q$ is \emph{simple} if it admits no non-trivial congruence
relations; equivalently, if any morphism $\alpha\colon Q\rightarrow P$
between PO systems is injective.
\end{definition}

\begin{remark}
\label{rem:morphisms}There is a natural duality between surjective
morphisms and congruence relations for a PO system. For if $\alpha\colon Q\rightarrow P$
is a morphism, then the relation $q\sim s$ iff $q\alpha=s\alpha$
is a congruence relation; conversely, if $\sim$ is a congruence relation,
then $\beta\colon Q\rightarrow Q\,/\sim:q\mapsto[q]$ is a morphism,
where $[q]<[r]$ iff there are $s,t\in Q$ such that $s\sim q$, $t\sim r$
and $s<t$.
\end{remark}

Every PO system has a unique simple image under some morphism:
\begin{theorem}[{Pierce~\cite{Pierce}, Hansoul~\cite[Proposition~1.8]{Hansoul2}}]
\label{Thm:simple image}Let $P$ be a PO system. 
\begin{enumerate}
\item There is a simple PO system $s(P)$ and a unique surjective morphism
$\alpha\colon P\rightarrow s(P)$;
\item $s(P)$ is unique up to isomorphism and has no non-trivial automorphisms;
\item for any surjective morphism $\beta\colon P\rightarrow Q$, there is
a unique surjective morphism $\gamma\colon Q\rightarrow s(P)$ such
that $\alpha=\beta\circ\gamma$. 
\end{enumerate}
\end{theorem}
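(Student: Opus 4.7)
The plan is to construct $s(P)$ as the quotient of $P$ by the largest congruence relation on $P$, exploiting the duality from Remark~\ref{rem:morphisms} between surjective morphisms and congruence relations.

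First I would verify that the collection of congruence relations on $P$, ordered by inclusion, has a maximum element. The key lemma is that if $\{\sim_{i}\}$ is any family of congruences on $P$, then the equivalence relation $\approx$ that it generates is itself a congruence: given $q\approx r$ with $q<s$, fix a chain $q=q_{0}\sim_{i_{1}}q_{1}\sim_{i_{2}}\cdots\sim_{i_{n}}q_{n}=r$ witnessing $q\approx r$, and apply the congruence property of each $\sim_{i_{j}}$ in succession to produce $s=s_{0},s_{1},\ldots,s_{n}$ with $q_{j}<s_{j}$ and $s_{j-1}\sim_{i_{j}}s_{j}$, so that $r<s_{n}$ and $s\approx s_{n}$. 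The union of all congruences therefore generates a maximum congruence $\sim$; I set $s(P)=P/\sim$ with the order induced from Remark~\ref{rem:morphisms} and let $\alpha\colon P\rightarrow s(P)$ be the quotient map, which is a surjective morphism by that remark.

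Simplicity of $s(P)$ is then immediate: any non-trivial congruence on $s(P)$ would pull back through $\alpha$ to a congruence on $P$ strictly larger than $\sim$, contradicting maximality. For part~(c), given a surjective morphism $\beta\colon P\rightarrow Q$, its kernel relation ($p\sim_{\beta}p'$ iff $p\beta=p'\beta$) is a congruence on $P$, and hence contained in $\sim$; so the assignment $\gamma(p\beta)=p\alpha$ is a well-defined map $Q\rightarrow s(P)$, and surjectivity of $\beta$ together with the morphism property of $\alpha$ force $\gamma$ to be a surjective morphism with $\alpha=\beta\circ\gamma$, uniquely determined by the surjectivity of $\beta$.

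Part~(b) is where I expect the main subtlety. Given an automorphism $\sigma$ of $s(P)$, I would define an equivalence relation $\approx_{\sigma}$ on $P$ by $p\approx_{\sigma}p'$ iff $\sigma^{n}(p\alpha)=p'\alpha$ for some $n\in\mathbb{Z}$, i.e.\ the pullback under $\alpha$ of the orbit equivalence for $\langle\sigma\rangle$ acting on $s(P)$. Checking that $\approx_{\sigma}$ is a congruence relies on the morphism property of $\alpha$ and the fact that $\sigma$ is an order automorphism: from $p<r$ and $\sigma^{n}(p\alpha)=p'\alpha$ we obtain $p'\alpha<\sigma^{n}(r\alpha)$, and the morphism property at $p'$ lifts $\sigma^{n}(r\alpha)$ to some $r'>p'$ with $r'\alpha=\sigma^{n}(r\alpha)$. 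Maximality of $\sim$ then forces $\approx_{\sigma}\,\subseteq\,\sim$, so $\sigma$ fixes every $\sim$-class, whence $\sigma=\mathrm{id}$ by surjectivity of $\alpha$. Uniqueness of $s(P)$ up to isomorphism now follows from~(c): any surjective $\phi\colon P\rightarrow S$ with $S$ simple yields a surjective morphism $\psi\colon S\rightarrow s(P)$, which is automatically injective (its kernel would be a non-trivial congruence on $S$) and reflects $<$ (by the morphism property), hence is an isomorphism of PO systems; the uniqueness clause in part~(a) then drops out of rigidity, since two surjective morphisms $P\rightarrow s(P)$ would differ by an automorphism of $s(P)$. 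The main obstacles are the join-of-congruences lemma and the orbit-congruence argument; the rest is universal-algebra bookkeeping, though the one-sided form of the congruence axiom and the fact that $<$ on a PO system need not be irreflexive mean each step has to be checked against the precise definitions.
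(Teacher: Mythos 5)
Your proposal is correct, and it follows exactly the construction the paper itself indicates immediately after the theorem statement, namely $s(P)=P/\sim$ for $\sim$ the maximal congruence on $P$ (the paper supplies no proof of its own, deferring to Pierce and Hansoul). The details you supply --- the join-of-congruences lemma, the pullback argument for simplicity, factoring a surjective morphism through its kernel congruence for part~(c), and the orbit-congruence argument for rigidity --- all check out against the paper's one-sided congruence axiom and its definition of morphism.
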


We will term $s(P)$ the \emph{simple image} of $P$; it is just $P\,/\sim$,
where $\sim$ is the maximal congruence on $P$ (being the union of
all congruences on $P$). It can readily be shown that a general PO
system $P$ is simple iff (a) the ``diagram'' $p_{\uparrow}$ is
simple for each $p\in P$ and (b) if $p_{\uparrow}\cong q_{\uparrow}$
($p,q\in P$), then $p=q$. 

\subsection{Regular refinements and consolidations of trim partitions}
\begin{definition}
Let $P$ and $Q$ be countable PO systems, $\mathscr{Z}=\{Z_{p}\mid p\in P\}^{*}$
a $P$-partition and $\mathscr{Y}=\{Y_{q}\mid q\in Q\}^{*}$ a $Q$-partition
of the $\omega$-Stone space $W$, and let $\alpha\colon Q\rightarrow P$
be a map. We say that $\mathscr{Y}$ \emph{refines }$\mathscr{Z}$
\emph{via }$\alpha$ if $Y_{q}\subseteq Z_{q\alpha}$ for each $q\in Q$.
If also $\mathscr{Z}$ and $\mathscr{Y}$ both satisfy~T1, we say
that $\mathscr{Y}$ \emph{regularly refines }$\mathscr{Z}$ \emph{via
}$\alpha$, or that $\mathscr{Z}$ \emph{regularly consolidates }$\mathscr{Y}$\emph{
via }$\alpha$, writing $\mathscr{Y}\prec_{\alpha}\mathscr{Z}$, if
for all $p\in\widehat{P}$ and $A\in\Trim_{p}(\mathscr{Z})$ we have
$p\in T_{\mathscr{Y}}(A)\alpha$.
\end{definition}

\begin{remark}
\label{rem:Refinemt}This generalises the idea of a \emph{regular
extension }in~\cite[Definition~4.4]{Apps-Stone}, which applies when
$\alpha$ is injective: see also Section~\ref{subsec:Regular-extensions}
below. The definition can be generalised to the case where $\mathscr{Z}$
and/or $\mathscr{Y}$ do not satisfy~T1 by requiring instead that
$T_{\mathscr{Z}}(A)\subseteq(T_{\mathscr{Y}}(A)\alpha)_{\uparrow}$
for all compact open $A$.

We note that if $\mathscr{Y}$ refines $\mathscr{Z}$ via $\alpha$,
then $\alpha$ preserves the order operation. For if $q,r\in Q$ and
$q<r$, then $Y_{q}\subseteq Y_{r}^{\prime}\subseteq Z_{r\alpha}^{\prime}$;
so $Z_{q\alpha}\subseteq Z_{r\alpha}^{\prime}$ and $q\alpha<r\alpha$. 
\end{remark}

Regularity of a refinement of a trim partition is highly desirable
in order that the trim sets in the two partitions are aligned, as
illustrated by the following Proposition.
\begin{proposition}
\label{Prop:regref}Let $P$ and $Q$ be countable PO systems, $\mathscr{Z}$
a $P$-partition and $\mathscr{Y}$ a $Q$-partition of the $\omega$-Stone
space $W$ both satisfying~T1, and $\alpha\colon Q\rightarrow P$
a map such that $\mathscr{Y}$ refines $\mathscr{Z}$ via $\alpha$.
Then the following are equivalent:
\begin{enumerate}
\item \label{enu:regref1}$\mathscr{Y}$ \emph{regularly refines }$\mathscr{Z}$
\emph{via }$\alpha$;
\item \label{enu:regref2}if $A\in\Trim_{r}(\mathscr{Z})$, then we can
find $s\in\widehat{Q}$ and $B\in\Trim_{s}(\mathscr{Y})$ such that
$B\subseteq A$ and $s\alpha=r$;
\item \label{enu:regref3}$\Trim_{q}(\mathscr{Y})\subseteq\Trim_{q\alpha}(\mathscr{Z})$
for all $q\in\widehat{Q}$.
\end{enumerate}
\end{proposition}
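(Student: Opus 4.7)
I will establish the equivalences via the cycle $(1) \Rightarrow (2) \Rightarrow (3) \Rightarrow (1)$. The main tools are T1 for both partitions, the fact recorded in the excerpt that $T(A)$ is always an upper set of $P$ or $Q$, the observation from Remark~\ref{rem:Refinemt} that $\alpha$ preserves the strict order, and the immediate consequence of $Y_q \subseteq Z_{q\alpha}$ that $T_{\mathscr{Y}}(A)\alpha \subseteq T_{\mathscr{Z}}(A)$ for every compact open $A$.

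For $(1) \Rightarrow (2)$, given $A \in \Trim_r(\mathscr{Z})$ I would use (1) to pick some $s_0 \in T_{\mathscr{Y}}(A)$ with $s_0\alpha = r$, select $w \in A \cap Y_{s_0}$, and use T1 to find a $\mathscr{Y}$-trim $B \subseteq A$ containing $w$, say of type $s$. Then $s_0 \in T_{\mathscr{Y}}(B) = s_\uparrow$ gives $s \leqslant s_0$ and so $s\alpha \leqslant r$ by order-preservation, while $s\alpha \in T_{\mathscr{Y}}(B)\alpha \subseteq T_{\mathscr{Z}}(B) \subseteq r_\uparrow$ (the last inclusion using that $B \subseteq A$ is itself $r$-trim in $\mathscr{Z}$) gives $s\alpha \geqslant r$. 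For $(3) \Rightarrow (1)$ the argument is parallel: pick $w \in A \cap Z_r$, find a $\mathscr{Y}$-trim $B \subseteq A$ containing $w$ of type $s$; (3) asserts $B$ is $(s\alpha)$-trim in $\mathscr{Z}$, and the same sandwich forces $s\alpha = r$, placing $s$ in $T_{\mathscr{Y}}(A)$.

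The main work is $(2) \Rightarrow (3)$. For $B \in \Trim_q(\mathscr{Y})$, the inclusion $(q\alpha)_\uparrow \subseteq T_{\mathscr{Z}}(B)$ follows from $q\alpha \in T_{\mathscr{Y}}(B)\alpha \subseteq T_{\mathscr{Z}}(B)$ combined with upper-closedness. For the reverse, given $p \in T_{\mathscr{Z}}(B)$ I would pick $w \in B \cap Z_p$, use T1 for $\mathscr{Z}$ to shrink to a $\mathscr{Z}$-trim $B_1 \subseteq B$ containing $w$ with $t_{\mathscr{Z}}(B_1) = p_1 \leqslant p$, and apply (2) to $B_1$ to obtain a $\mathscr{Y}$-trim $C \subseteq B_1$ of type $s$ with $s\alpha = p_1$; since $C \subseteq B$ forces $s \in T_{\mathscr{Y}}(B) = q_\uparrow$, we get $p_1 = s\alpha \geqslant q\alpha$, hence $p \geqslant q\alpha$.

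The most delicate step is verifying the cardinality condition when $q\alpha \in P^d$. First, $q \in Q^d$ (otherwise $q < q$ would give $q\alpha < q\alpha$ by strict order preservation), so $|B \cap Y_q| = 1$, say $\{u\}$, with $u \in Z_{q\alpha}$. If some $w \in B \cap Z_{q\alpha}$ were distinct from $u$, I would separate them by a compact open $B_w \subseteq B$ containing $w$ but not $u$, find a $\mathscr{Z}$-trim $B_1 \subseteq B_w$ around $w$, and combine $T_{\mathscr{Z}}(B_1) \subseteq T_{\mathscr{Z}}(B) = (q\alpha)_\uparrow$ with $q\alpha \in T_{\mathscr{Z}}(B_1)$ and antisymmetry of $\leqslant$ to force $t_{\mathscr{Z}}(B_1) = q\alpha$. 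A further application of (2) yields a $\mathscr{Y}$-trim $C \subseteq B_1$ of type $s$ with $s\alpha = q\alpha$; since $s \geqslant q$ and $s > q$ would contradict $q\alpha \in P^d$, we get $s = q$. Then $C \in \Trim_q(\mathscr{Y})$ forces $|C \cap Y_q| = 1$, but this unique point lies in $B \cap Y_q = \{u\}$ while $C \subseteq B_w \not\ni u$, the desired contradiction. This nested use of (2) inside the cardinality argument is the main obstacle; the remaining manipulations are routine sandwich arguments between the two partitions.
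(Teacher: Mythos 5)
Your proof is correct and follows essentially the same route as the paper: the same cycle $(1)\Rightarrow(2)\Rightarrow(3)\Rightarrow(1)$, with the same use of T1 to shrink to trim sets and the same sandwich arguments via order-preservation of $\alpha$. The only difference is in $(2)\Rightarrow(3)$, where the paper deletes the witness point $w\in B\cap Y_q$ at the outset and shows every type met by $B-\{w\}$ is strictly above $q\alpha$, obtaining the upper-set and cardinality conditions in one stroke, whereas you verify them separately; both versions are sound.
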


\begin{proof}
Let $\mathscr{Z}=\{Z_{p}\mid p\in P\}^{*}$ and $\mathscr{Y}=\{Y_{q}\mid q\in Q\}^{*}$.

\ref{enu:regref1}~$\Rightarrow$~\ref{enu:regref2}: If $A\in\Trim_{r}(\mathscr{Z})$,
then we can find $t\in T_{\mathscr{Y}}(A)$ such that $t\alpha=r$.
Choose $x\in A\cap Y_{t}$ and use~T1 to find $B\in\Trim_{s}(\mathscr{Y})$,
say, where $s\in\widehat{Q}$, such that $x\in B\subseteq A$, so
that $A\cap Z_{s\alpha}\neq\emptyset$. Then $s\leqslant t$, so $r\leqslant s\alpha\leqslant t\alpha=r$,
and $r=s\alpha$ as required.

\ref{enu:regref2}~$\Rightarrow$~\ref{enu:regref3}: Suppose $A\in\Trim_{q}(\mathscr{Y})$.
Then $Y_{q}\subseteq Z_{q\alpha}$, so $A\cap Z_{q\alpha}\neq\emptyset$
and $T_{\mathscr{Z}}(A)\supseteq(q\alpha)_{\uparrow}$ as $T_{\mathscr{Z}}(A)$
is an upper subset of $P$. Choose $w\in Y_{q}\cap A$ and suppose
$(A-\{w\})\cap Z_{p}\neq\emptyset$ for some $p\in P$: say $x\in(A-\{w\})\cap Z_{p}$. 

Choose $B\in\Trim_{r}(\mathscr{Z})$, say, such that $x\in B\subseteq A-\{w\}$;
now find $s\in\widehat{Q}$ and $C\in\Trim_{s}(\mathscr{Y})$ such
that $C\subseteq B$ and $s\alpha=r$. Then $s>q$, so by Remark~\ref{rem:Refinemt}
we have $p\geqslant r=s\alpha>q\alpha$. Hence $A$ is $q\alpha$-trim
in $\mathscr{Z}$.

\ref{enu:regref3}~$\Rightarrow$~\ref{enu:regref1}: Suppose $p\in\widehat{P}$
and $A\in\Trim_{p}(\mathscr{Z})$. Clearly $T_{\mathscr{Z}}(A)\supseteq T_{\mathscr{Y}}(A)\alpha$.
Choose $x\in A\cap Z_{p}$ and $B\in\Trim_{q}(\mathscr{Y})$, say,
such that $x\in B\subseteq A$. Then $B$ is both $p$-trim and $q\alpha$-trim
in $\mathscr{Z}$, and so $p=q\alpha\in T_{\mathscr{Y}}(A)\alpha$.
\end{proof}
\begin{remark}
In~\cite[Example~4.6]{Apps-Stone} we give an example of an extension
of a trim partition that is not regular, where (in the notation above)
there are sets in $\Trim(\mathscr{Y})$ that are $p$-trim in $\mathscr{Z}$
for $p\notin Q\alpha$; in particular, $\alpha$ is not surjective
in this instance. A simple modification of this example would yield
an extension where $\alpha$ was surjective on $W$ but not on some
compact open subset $A$ of $W$. So it is not sufficient that $\alpha$
be surjective on~$W$: we need $\alpha$ to be surjective ``on each
compact open'', so that that the consolidation does not introduce
any unexpected trim partition elements anywhere.
\end{remark}

We will need the following general properties of regular refinements.
\begin{proposition}[Regular refinements]
\label{Prop:regrefprops}

Let $P$ and $Q$ be countable PO systems, $\alpha\colon Q\rightarrow P$
a map, $\mathscr{Y}$ a trim $Q$-partition of the $\omega$-Stone
space $W$, and $\mathscr{Z}$, $\mathscr{Z}_{1}$ and $\mathscr{Z}_{2}$
trim $P$-partitions of $W$.
\begin{enumerate}
\item \label{enu:refine1-1}If $\mathscr{Y}\prec_{\alpha}\mathscr{Z}$,
then $\alpha$ is a surjective morphism; 
\item \label{enu:refine1-3}if $\mathscr{Y}\prec_{\alpha}\mathscr{Z}_{1}$
and $\mathscr{Y}\prec_{\alpha}\mathscr{Z}_{2}$, then $\mathscr{Z}_{1}=\mathscr{Z}_{2}$;
\item \label{enu:refine1-4}if $\mathscr{Y}\prec_{\alpha}\mathscr{Z}$ and
$\mathscr{Z}\prec_{\gamma}\mathscr{V}$, where $N$ is a PO system,
$\gamma\colon P\rightarrow N$ and $\mathscr{V}$ is a trim $N$-partition
of $W$, then $\mathscr{Y}\prec_{\alpha\gamma}\mathscr{V}$;
\item \label{enu:refine1-5}if $\mathscr{Y}\prec_{\alpha}\mathscr{Z}$ and
$\alpha$ is an isomorphism, then $\mathscr{Y}=\mathscr{Z}$ (after
relabelling);
\item \label{enu:refine1-6}if $\mathscr{Y}\prec_{\alpha}\mathscr{Z}$ and
$\mathscr{Z}\prec_{\gamma}\mathscr{Y}$, where $\gamma\colon P\rightarrow Q$,
then $\mathscr{Y}=\mathscr{Z}$.
\end{enumerate}
\end{proposition}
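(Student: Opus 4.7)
The plan is to prove parts (a), (b), (c) directly and then derive (d), (e) as easy consequences. Throughout, write $\mathscr{Z}=\{Z_{p}\mid p\in P\}^{*}$, $\mathscr{Y}=\{Y_{q}\mid q\in Q\}^{*}$, and similarly for $\mathscr{V}$, $\mathscr{Z}_{1}$, $\mathscr{Z}_{2}$.

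For (a), Remark~\ref{rem:Refinemt} already provides that $\alpha$ preserves order, so only surjectivity and the reverse inclusion $\{p\in P\mid p>q\alpha\}\subseteq\{r\in Q\mid r>q\}\alpha$ in the morphism condition require work. Surjectivity is immediate: T1--T3 force $\widehat{P}=P$ (any $w\in Z_{p}$ is clean, hence has a $p$-trim neighbourhood), so picking $A\in\Trim_{p}(\mathscr{Z})$ and invoking regularity yields $r\in Q$ with $r\alpha=p$. For the reverse inclusion, the idea is to nest trim sets across the two partitions: given $p>q\alpha$, take $x\in Y_{q}$ with a $q$-trim $\mathscr{Y}$-neighbourhood $C$, which by Proposition~\ref{Prop:regref}(c) is also $q\alpha$-trim in $\mathscr{Z}$ and hence meets $Z_{p}$; pick a $p$-trim $\mathscr{Z}$-neighbourhood $B\subseteq C$ of some $y\in C\cap Z_{p}$; then Proposition~\ref{Prop:regref}(b) produces $D\in\Trim_{s}(\mathscr{Y})$ with $D\subseteq B$ and $s\alpha=p$. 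Since $D\subseteq C$ forces $s\geqslant q$ and $s\alpha\neq q\alpha$, we get $s>q$ as required.

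For (b), the strategy is to show $\Trim_{p}(\mathscr{Z}_{1})=\Trim_{p}(\mathscr{Z}_{2})$ for every $p\in P$, which then yields $\mathscr{Z}_{1}=\mathscr{Z}_{2}$ via T2. Given $A\in\Trim_{p}(\mathscr{Z}_{1})$, use T1 for $\mathscr{Y}$ and compactness of $A$ to produce a finite partition $A=B_{1}\sqcup\cdots\sqcup B_{n}$ with $B_{j}\in\Trim_{s_{j}}(\mathscr{Y})$; Proposition~\ref{Prop:regref}(c) applied to both refinements places each $B_{j}$ in $\Trim_{s_{j}\alpha}(\mathscr{Z}_{1})\cap\Trim_{s_{j}\alpha}(\mathscr{Z}_{2})$, so the types of $A$ in the two partitions coincide (both equal $p_{\uparrow}$), and when $p\in P^{d}$, counting pieces with $s_{j}\alpha=p$ gives $|A\cap Z_{p}^{(2)}|=|A\cap Z_{p}^{(1)}|=1$. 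Hence $A\in\Trim_{p}(\mathscr{Z}_{2})$. Now, if $w\in Z_{p}^{(1)}$ has a $p$-trim $\mathscr{Z}_{1}$-neighbourhood $A$, every compact open $A'\subseteq A$ containing $w$ is $p$-trim in $\mathscr{Z}_{1}$ (its type is an upper set containing $p$ and lying inside $p_{\uparrow}$), hence in $\mathscr{Z}_{2}$, so $w$ has a $p$-trim $\mathscr{Z}_{2}$-neighbourhood base and T2 places $w$ in $Z_{p}^{(2)}$.

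Part (c) is a double application of Proposition~\ref{Prop:regref}(b): $Y_{q}\subseteq Z_{q\alpha}\subseteq V_{q\alpha\gamma}$ gives refinement via $\alpha\gamma$, and for regularity, given $A\in\Trim_{n}(\mathscr{V})$, pull back to $B\in\Trim_{p}(\mathscr{Z})$ with $B\subseteq A$ and $p\gamma=n$, then to $D\in\Trim_{q}(\mathscr{Y})$ with $D\subseteq B$ and $q\alpha=p$. Part (d) reduces to (b): after relabelling via $\alpha$ we have $\mathscr{Y}\prec_{\id}\mathscr{Z}$ and trivially $\mathscr{Y}\prec_{\id}\mathscr{Y}$, so $\mathscr{Z}=\mathscr{Y}$. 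For (e), the chain $Y_{q}\subseteq Z_{q\alpha}\subseteq Y_{q\alpha\gamma}$ together with disjointness of the $Y_{r}$ forces $\alpha\gamma=\id_{Q}$, and symmetrically $\gamma\alpha=\id_{P}$, so $\alpha$ is an isomorphism and (d) concludes. I expect the main obstacle to be the reverse morphism inclusion in (a), where one must alternate $\mathscr{Y}$- and $\mathscr{Z}$-trim neighbourhoods three times to extract a strict preimage above $q$; the rest of the proposition is essentially bookkeeping on top of Proposition~\ref{Prop:regref}.
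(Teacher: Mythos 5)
Your parts \ref{enu:refine1-3}--\ref{enu:refine1-6} are correct and essentially reproduce the paper's arguments: the detour through global equality of trim sets in \ref{enu:refine1-3}, the use of Proposition~\ref{Prop:regref}\ref{enu:regref2} twice rather than \ref{enu:regref3} twice in \ref{enu:refine1-4}, and routing \ref{enu:refine1-6} through \ref{enu:refine1-5} are cosmetic variations. The problem is in part \ref{enu:refine1-1}, in the reverse morphism inclusion. You deduce $s>q$ from ``$s\geqslant q$ and $s\alpha\neq q\alpha$'', but $s\alpha=p$ and the hypothesis $p>q\alpha$ does \emph{not} exclude $p=q\alpha$: that case occurs precisely when $q\alpha<q\alpha$, and it is part of what must be proved (the morphism condition demands some $r>q$ with $r\alpha=q\alpha$ whenever $q\alpha\in P_{1}$). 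When moreover $q\in Q^{d}$, your argument genuinely breaks down: you may pick $y=x$ (since $x\in Y_{q}\subseteq Z_{q\alpha}=Z_{p}$), and Proposition~\ref{Prop:regref}\ref{enu:regref2} gives no control over where $D$ lands, so $D$ may be a $q$-trim neighbourhood of $x$, i.e.\ $s=q$, which does not give $s>q$ since $q\nless q$. A concrete instance: $Q=\{q,r\}$ with $q<r<r$ and $q\nless q$, $\mathscr{Y}$ the trim $Q$-partition of the Cantor set $W$ with $Y_{q}$ a singleton and $Y_{r}$ its complement, $P=\{p\}$ with $p<p$, $\mathscr{Z}=\{W\}$, and $\alpha$ the constant map; here every step of your construction can be carried out so as to end with $s=q$.

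The repair is exactly what the paper does: since $p>q\alpha$, the $P$-partition axiom gives $Y_{q}\subseteq Z_{q\alpha}\subseteq Z_{p}^{\prime}$, so $(C-\{x\})\cap Z_{p}\neq\emptyset$ and the $p$-trim set $B$ can be chosen inside the open set $C-\{x\}$. Then $D\subseteq C-\{x\}$; if $q\in Q^{d}$ then $C\cap Y_{q}=\{x\}$, so $D\cap Y_{q}=\emptyset$ and $s\neq q$, whence $s>q$; if $q<q$ then $s\geqslant q$ already yields $s>q$. With this one-line change (which also subsumes your $p\neq q\alpha$ case), part \ref{enu:refine1-1} is correct and the remainder of your proof stands.
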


\begin{proof}
\ref{enu:refine1-1} Let $\mathscr{Z}=\{Z_{p}\mid p\in P\}^{*}$ and
$\mathscr{Y}=\{Y_{q}\mid q\in Q\}^{*}$, so that $Y_{q}\subseteq Z_{q\alpha}$
for $q\in Q$. 

It follows immediately from the definition of a regular refinement
that $\alpha$ is surjective. We must show that $\alpha$ is a morphism.
If $q<r$, then $q\alpha<r\alpha$ by Remark~\ref{rem:Refinemt}.
Conversely, if $p\in P$, $q\in Q$ and $q\alpha<p$, choose $y\in Y_{q}$
and find $A\in\Trim_{q}(\mathscr{Y})$ such that $y\in A$. Then $A\in\Trim_{q\alpha}(\mathscr{Z})$,
so we can find $B\subseteq A-\{y\}$ such that $B\in\Trim_{p}(\mathscr{Z})$,
as $p>q\alpha$. By regularity, we can find $s\in Q$ such that $s\alpha=p$
and $B\cap Y_{s}\neq\emptyset$, so that $s>q$. Hence $\alpha$ is
a surjective morphism, as required. 

\ref{enu:refine1-3} Let $\mathscr{Z}_{i}=\{Z_{ip}\mid p\in P\}^{*}$
($i=1,2$). Suppose $w\in Z_{1p}$ for $p\in P$ and let $\{A_{n}\mid n\geqslant1\}\subseteq\Trim_{p}(\mathscr{Z}_{1})$
be a neighbourhood base of $w$. 

Find $q_{n}\in Q$ and $B_{n}\in\Trim_{q_{n}}(\mathscr{Y})$ such
that $w\in B_{n}\subseteq A_{n}$; by regularity, $B_{n}$ is $q_{n}\alpha$-trim
in both $\mathscr{Z}_{1}$ and $\mathscr{Z}_{2}$. But $B_{n}$ is
$p$-trim in $\mathscr{Z}_{1}$. Hence $B_{n}\in\Trim_{p}(\mathscr{Z}_{2})$,
so by fullness $w\in Z_{2p}$.

\ref{enu:refine1-4} If $q\in Q$, then $\Trim_{q}(\mathscr{Y})\subseteq\Trim_{q\alpha}(\mathscr{Z})\subseteq\Trim_{q\alpha\gamma}(\mathscr{V})$,
using Proposition~\ref{Prop:regref}.

\ref{enu:refine1-5} Let $\mathscr{Y}\alpha$ denote the $P$-partition
$\{Y_{q\alpha}\mid q\in Q\}^{*}$, which is a trim partition, just
being $\mathscr{Y}$ relabelled. Then $\mathscr{Y}\prec_{\alpha}\mathscr{Y}\alpha$
and $\mathscr{Y}\prec_{\alpha}\mathscr{Z}$, so $\mathscr{Z}=\mathscr{Y}\alpha$
by~\ref{enu:refine1-3}. 

\ref{enu:refine1-6} We see immediately that $Y_{q}\subseteq Z_{q\alpha}\subseteq Y_{q\alpha\gamma}$.
Hence $\alpha\gamma$ and $\gamma\alpha$ are both the identity, and
$Y_{q}=Z_{q\alpha}$ for all $q\in Q$.
\end{proof}
The following result, which generalises Williams~\cite[Lemma~2]{Williams},
underpins what follows.
\begin{proposition}
\label{Prop:trim image under morphism}Let $P$ and $Q$ be PO systems,
$\alpha\colon Q\rightarrow P$ a surjective morphism, and $\mathscr{Y}$
a trim $Q$-partition of the (primitive) $\omega$-Stone space $W$.
Then there is a unique trim $P$-partition of $W$, which we denote
$\mathscr{Y}\alpha$, such that $\mathscr{Y}\prec_{\alpha}\mathscr{Y}\alpha$. 
\end{proposition}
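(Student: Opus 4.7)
Uniqueness follows immediately from Proposition~\ref{Prop:regrefprops}\ref{enu:refine1-3}: any two trim $P$-partitions regularly consolidating $\mathscr{Y}$ via $\alpha$ coincide.

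For existence I define $\mathscr{Y}\alpha = \{Z_{p} \mid p \in P\}^{*}$ by labelling each $w \in W$ with the maximum, when it exists, of the set $\{q\alpha \mid q \in I_{w}^{\mathscr{Y}}\}$, where $I_{w}^{\mathscr{Y}}$ is the ideal of $Q$ given by TP1; then $Z_{p}$ is the set of points labelled $p$. Since a morphism preserves $\leqslant$, and since a clean point $w \in Y_{q}$ has $I_{w}^{\mathscr{Y}} = q_{\downarrow}$ by TP2, the label of such $w$ is $q\alpha$, so $Y_{q} \subseteq Z_{q\alpha}$; surjectivity of $\alpha$ then makes each $Z_{p}$ nonempty, and a principal ideal of a PO system determines its generator, so $\{Z_{p}\}$ is a well-defined sub-partition.

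The crux is the lemma that $A \in \Trim_{q}(\mathscr{Y})$ forces $A \in \Trim_{q\alpha}(\mathscr{Y}\alpha)$. The inclusion $T_{\mathscr{Y}\alpha}(A) \supseteq (q\alpha)_{\uparrow}$ uses upward lifting along $\alpha$: every $p \in (q\alpha)_{\uparrow}$ lifts to some $s \in q_{\uparrow}$ with $\emptyset \neq A \cap Y_{s} \subseteq A \cap Z_{p}$. The reverse $T_{\mathscr{Y}\alpha}(A) \subseteq (q\alpha)_{\uparrow}$ holds because every $w \in A$ has $q \in I_{w}^{\mathscr{Y}}$ (witnessed by $A$ itself), so the label of $w$ dominates $q\alpha$. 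From this lemma, T1 for $\mathscr{Y}\alpha$ is inherited from T1 for $\mathscr{Y}$; T3 follows because $w \in Z_{p}$ has $p \in I_{w}^{\mathscr{Y}}\alpha$, so a $\mathscr{Y}$-trim neighbourhood of type $q$ with $q\alpha = p$ is $p$-trim in $\mathscr{Y}\alpha$; and the $\supseteq$ half of the $P$-partition axiom is handled for $w \in Z_{r}$ with $r < p$ by lifting $r < p$ to $s > q$, for $q \in I_{w}^{\mathscr{Y}}$ with $q\alpha = r$, and finding points of $Y_{s} \subseteq Z_{p}$ inside a $q$-trim neighbourhood of $w$.

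For T2, the remaining $\subseteq$ half of the $P$-partition axiom, and the refinement $\mathscr{Y} \prec_{\alpha} \mathscr{Y}\alpha$, my plan is to apply TP5 to a $p$-trim-in-$\mathscr{Y}\alpha$ set $A$ containing the relevant point $w$: decomposing $A$ into $\mathscr{Y}$-trim pieces $B_{i}$ of types $q_{i} \in T_{\mathscr{Y}}(A)_{\min}$, the lemma gives $\bigcup_{i}(q_{i}\alpha)_{\uparrow} = T_{\mathscr{Y}\alpha}(A) = p_{\uparrow}$, which forces $q_{i_{0}}\alpha = p$ for the piece $B_{i_{0}}$ containing $w$. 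This simultaneously realises $p$ as an attained maximum of $I_{w}^{\mathscr{Y}}\alpha$ (delivering T2 and the $\subseteq$ direction of the $P$-partition axiom via the argument that any $q_{0}$-trim neighbourhood of $w \in Z_{r} \cap Z_{p}^{\prime}$ attaining the maximum $q_{0}\alpha = r$ must meet $Z_{p}$, giving $r \leqslant p$ and hence $r < p$) and exhibits $q_{i_{0}} \in T_{\mathscr{Y}}(A)$ with $q_{i_{0}}\alpha = p$, so $p \in T_{\mathscr{Y}}(A)\alpha$. The main obstacle is T2, where the upward-only morphism property of $\alpha$ must be married to the TP5 decomposition to realise $p$ as an attained maximum rather than a mere upper bound of $I_{w}^{\mathscr{Y}}\alpha$.
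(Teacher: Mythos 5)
Your construction is essentially the paper's in different clothing. The paper first sets $Z_{p}=\bigcup\{Y_{q}\mid q\alpha=p\}$, checks the $P$-partition axioms together with T1 and T3, and then enlarges each $Z_{p}$ to the set $U_{p}$ of points with a neighbourhood base of $p$-trim sets in order to secure fullness; your ``attained maximum of $I_{w}^{\mathscr{Y}}\alpha$'' labelling produces exactly those sets $U_{p}$ in one step, and your crux lemma $\Trim_{q}(\mathscr{Y})\subseteq\Trim_{q\alpha}(\mathscr{Y}\alpha)$, the lifting of order relations along the morphism, and the appeal to Proposition~\ref{Prop:regrefprops}\ref{enu:refine1-3} for uniqueness all coincide with the paper's proof. (One detail you should not omit from the crux lemma: the singleton condition when $q\alpha\in P^{d}$. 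Since $\alpha$ preserves $<$, $q\alpha\in P^{d}$ forces $q\in Q^{d}$, so $|A\cap Y_{q}|=1$, and any second point of $A\cap Z_{q\alpha}$ would carry a label strictly above $q\alpha$; this is easy but it is part of what ``$q\alpha$-trim'' means.)

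The one inference that does not follow as written is in your T2 step. From $\bigcup_{i}(q_{i}\alpha)_{\uparrow}=T_{\mathscr{Y}\alpha}(A)=p_{\uparrow}$ you may conclude that every $q_{i}\alpha\geqslant p$ and that \emph{some} piece $B_{j}$ has $q_{j}\alpha=p$, but not that the piece $B_{i_{0}}$ containing $w$ does: a priori $q_{i_{0}}\alpha\gneqq p$, with $p$ realised only by a piece away from $w$. The repair is to use the hypothesis of T2 a second time: $w$ has a neighbourhood base of $p$-trim sets, so there is a $p$-trim $A^{\prime}$ with $w\in A^{\prime}\subseteq B_{i_{0}}$, whence $p_{\uparrow}=T_{\mathscr{Y}\alpha}(A^{\prime})\subseteq T_{\mathscr{Y}\alpha}(B_{i_{0}})=(q_{i_{0}}\alpha)_{\uparrow}$ gives $q_{i_{0}}\alpha\leqslant p$ and hence $q_{i_{0}}\alpha=p$. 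The same interleaving (inside any $q$-trim neighbourhood of $w$ with $q\in I_{w}^{\mathscr{Y}}$ there is a $p$-trim neighbourhood of $w$) shows $q\alpha\leqslant p$ for every $q\in I_{w}^{\mathscr{Y}}$, so $p$ is genuinely the attained maximum of $I_{w}^{\mathscr{Y}}\alpha$ and $w\in Z_{p}$. With that patch the argument goes through; note that for the regularity claim itself the weaker conclusion (some $q_{j}\in T_{\mathscr{Y}}(A)$ with $q_{j}\alpha=p$) already suffices, so only T2 needed the extra care.
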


\begin{proof}
For $p\in P$ let $Z_{p}=\bigcup_{q\in Q}\{Y_{q}\mid q\alpha=p\}$
and let $\mathscr{Z}=\{Z_{p}\mid p\in P\}^{*}$. We show first that
$\mathscr{Z}$ is a $P$-partition of $W$. Choose $p\in P$ and suppose
$y\in\bigcup_{r<p}Z_{r}$; say $y\in Y_{q}\subseteq Z_{r}$, where
$q\alpha=r<p$. As $\alpha$ is a morphism, we can find $s>q$ such
that $s\alpha=p$. Hence $y\in Y_{q}\subseteq Y_{s}^{\prime}\subseteq Z_{p}^{\prime}$.
Conversely, suppose $y\in Z_{p}^{\prime}\cap Z_{P}$; say $y\in Y_{q}$.
Choose a $q$-trim neighbourhood $A$ of $y$. Then $(A-\{y\})\cap Z_{p}\neq\emptyset$;
choose $z\in(A-\{y\})\cap Z_{p}$ and suppose $z\in Y_{s}$. Then
$q<s$, and $y\in Y_{q}\subseteq Z_{q\alpha}$ with $q\alpha<s\alpha=p$.
Hence $Z_{p}^{\prime}\cap Z_{P}=\bigcup_{r<p}Z_{r}$ and $\mathscr{Z}$
is a $P$-partition of $W$.

We claim next that $\Trim_{q}(\mathscr{Y})\subseteq\Trim_{q\alpha}(\mathscr{Z})$.
For if $A\in\Trim_{q}(\mathscr{Y})$ and $p>q\alpha$, then we can
find $r>q$ such that $p=r\alpha$, so $A\cap Z_{p}\supseteq A\cap Y_{r}\neq\emptyset$;
conversely, if $(A-\{y\})\cap Z_{p}\neq\emptyset$, where $y\in A\cap Y_{q}$,
then $p=r\alpha$ for some $r\in Q$ such that $r>q$, so $p>q\alpha$,
as required. 

Hence $\Trim(\mathscr{Z})$ forms a neighbourhood base for each element
of $W$ and $\mathscr{Z}$ satisfies T1. Moreover, if $z\in Z_{p}$
for $p\in P$ then we can find $q\in Q$ such that $z\in Y_{q}$ and
$p=q\alpha$; but $\mathscr{Y}$ is a trim $Q$-partition, so $z$
has a $q$-trim neighbourhood, which will also be $p$-trim; hence
all points in $Z_{p}$ are clean and $\mathscr{Z}$ satisfies T3. 

Now for $p\in P$, let $U_{p}$ consist of all elements of $W$ having
a neighbourhood base of $p$-trim sets, and let $\mathscr{Y}\alpha=\{U_{p}\mid p\in P\}^{*}$,
so that $Z_{p}\subseteq U_{p}$. It is easy to see that $\mathscr{\mathscr{Y}\alpha}$
satisfies T1 to T3 and has the same trim sets as $\mathscr{Z}$.
Finally, $\Trim_{q}(\mathscr{Y})\subseteq\Trim_{q\alpha}(\mathscr{Y}\alpha)$
and so the refinement is regular. Hence $\mathscr{Y}\alpha$ is a
trim $P$-partition which regularly consolidates $\mathscr{Y}$ via
$\alpha$, as required.

Uniqueness is now immediate from Proposition~\ref{Prop:regrefprops}\ref{enu:refine1-3}.
\end{proof}
\begin{remark}
The extension from $\mathscr{Z}$ to $\mathscr{Y}\alpha$ in the proof
is necessary. For let $Q$ be any countable PO system not satisfying
the ascending chain condition such that $Q^{d}=\emptyset$. Then there
is a countable atomless Boolean ring whose Stone space $W$ admits
a trim $Q$-partition $\mathscr{Y}$, and $\mathscr{Y}$ will not
be complete (\cite[Theorem~5.1]{Apps-Stone}). However there is a
morphism $\alpha$ from $Q$ to $P=\{p\}$, where $p<p$, and the
trim $P$-partition of $W$ is just $\mathscr{Y}\alpha=\{W\}$.
\end{remark}

In the statement of the following Theorem, if $Q,P_{1},P_{2}$ are
PO systems and $\alpha_{i}\colon Q\rightarrow P_{i}$ $(i=1,2)$ are
surjective morphisms, we will say that the pairs $(P_{i},\alpha_{i})$,
which we term \emph{$Q$-pairs}, are equivalent if there is a PO system
isomorphism $\beta\colon P_{1}\rightarrow P_{2}$ such that $q\alpha_{2}=q\alpha_{1}\beta$
for all $q\in Q$.
\begin{theorem}
\label{Prop:morphism equiv classes}Let $Q$ be a PO system and $\mathscr{Y}$
a trim $Q$-partition of the primitive $\omega$-Stone space $W$.
Then there is a 1--1 correspondence between (i) trim partitions of
$W$ that regularly consolidate $\mathscr{Y}$, and (ii) equivalence
classes of $Q$-pairs $(P,\alpha)$. 
\end{theorem}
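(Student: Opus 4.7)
The plan is to exhibit mutually inverse maps between the two sides, using Proposition~\ref{Prop:trim image under morphism} for the forward direction and Proposition~\ref{Prop:regrefprops}\ref{enu:refine1-1} for the reverse. Concretely, set $\Phi(P,\alpha)=\mathscr{Y}\alpha$ on $Q$-pairs; and for a trim $P$-partition $\mathscr{Z}$ with $\mathscr{Y}\prec_\alpha\mathscr{Z}$ send it back to $(P,\alpha)$, noting that $\alpha$ is a surjective morphism by Proposition~\ref{Prop:regrefprops}\ref{enu:refine1-1} and is uniquely determined by $\mathscr{Z}$, since for each $q\in Q$ the non-empty set $Y_q$ lies inside the unique block $Z_{q\alpha}$.

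Next I would check that $\Phi$ descends to equivalence classes. Suppose $(P_1,\alpha_1)\sim(P_2,\alpha_2)$ via an isomorphism $\beta\colon P_1\rightarrow P_2$, and write $\mathscr{Y}\alpha_1=\{Z_p\mid p\in P_1\}^*$. Define the $P_2$-indexed partition $\mathscr{Z}'=\{Z_{p'\beta^{-1}}\mid p'\in P_2\}^*$; since $\beta$ is a PO system isomorphism, $\mathscr{Z}'$ is a trim $P_2$-partition of $W$ with the same trim sets as $\mathscr{Y}\alpha_1$. The computation $Y_q\subseteq Z_{q\alpha_1}=Z'_{q\alpha_1\beta}=Z'_{q\alpha_2}$, combined with the trim-set description of regularity (Proposition~\ref{Prop:regref}\ref{enu:regref3}), gives $\mathscr{Y}\prec_{\alpha_2}\mathscr{Z}'$, so by the uniqueness clause of Proposition~\ref{Prop:trim image under morphism} we have $\mathscr{Z}'=\mathscr{Y}\alpha_2$. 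Hence $\Phi$ is well-defined on equivalence classes.

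For injectivity, suppose $\mathscr{Y}\alpha_1$ and $\mathscr{Y}\alpha_2$ agree as (unlabelled) trim partitions of $W$, and let $\beta\colon P_1\rightarrow P_2$ be the bijection identifying their blocks. Applying~(\ref{eq:P-partition}) to each side and using disjointness of the blocks shows $\{q\mid q<p\}=\{q\mid q\beta<p\beta\}$, so $\beta$ respects the order and is a PO system isomorphism; and each $Y_q$ sits inside the block labelled $q\alpha_1$ on the $P_1$-side and $q\alpha_2$ on the $P_2$-side, forcing $q\alpha_2=q\alpha_1\beta$. Surjectivity of $\Phi$ is then immediate from Proposition~\ref{Prop:regrefprops}\ref{enu:refine1-3}. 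I do not expect any genuine obstacle here: the substantive content is already concentrated in Propositions~\ref{Prop:trim image under morphism} and~\ref{Prop:regrefprops}, and the only care required is to keep track of the distinction between a trim partition as an underlying set-theoretic object and its labelling by a PO system, so that the quotient by relabellings on one side lines up with equivalence of $Q$-pairs on the other.
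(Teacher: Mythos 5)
Your proposal is correct and follows essentially the same route as the paper: both directions are delegated to Proposition~\ref{Prop:trim image under morphism} and Proposition~\ref{Prop:regrefprops}\ref{enu:refine1-1}, well-definedness on equivalence classes is checked by relabelling via the isomorphism $\beta$, and the uniqueness clause of Proposition~\ref{Prop:trim image under morphism} makes the two maps mutually inverse. Your extra care in spelling out why the block bijection between $\mathscr{Y}\alpha_{1}$ and $\mathscr{Y}\alpha_{2}$ is a PO system isomorphism intertwining the $\alpha_{i}$ is detail the paper leaves implicit, but it is not a different argument.
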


\begin{proof}
Proposition~\ref{Prop:regrefprops}\ref{enu:refine1-1} and Theorem~\ref{Prop:trim image under morphism}
establish each-way maps between trim partitions of $W$ that regularly
consolidate $\mathscr{Y}$ and equivalence classes of $Q$-pairs $(P,\alpha)$:
for if $Q$-pairs $(P_{1},\alpha_{1})$ and $(P_{2},\alpha_{2})$
are equivalent, then $\mathscr{Y}\alpha_{1}=\mathscr{Y}\alpha_{2}$
(with the partition relabelling given by the isomorphism between $P_{1}$
and $P_{2}$). By the uniqueness of $\mathscr{Y}\alpha$ these each-way
maps are inverse to each other. Hence the correspondence is 1--1,
as required. 
\end{proof}

\subsection{Structure diagrams and the canonical trim partition of a primitive
space}

We show that there is a canonical trim partition of a primitive space,
which regularly consolidates all other trim partitions of the space.
\begin{definition}[\emph{Structure diagram of a Boolean space}]
Let $W$ be a primitive $\omega$-Stone space, let $R=\Co(W)$, and
let $\PI(R)$ denote the set of all the PI\ elements of~$R$. Then:
\begin{enumerate}
\item the \emph{structure diagram of $W$ }is the PO system $\mathscr{S}(W)=\{[A]\mid A\in\PI(R)\}$,
with the relation $[A]<[B]$ iff $[A]\times[B]\cong[A]$;
\item the \emph{canonical partition }of $W$ is given by $\mathscr{X}(W)=\{X_{p}\mid p\in\mathscr{S}(W)\}^{*}$,
where $X_{p}$ is the set of points with a neighbourhood base of sets
isomorphic to $p$, for $p\in\mathscr{S}(W)$.
\end{enumerate}
\end{definition}

\begin{remark}
\label{rem: order reversal}$\mathscr{S}(W)$ is the same as the structure
diagram defined by Hanf (see~\cite[Construction~5.3]{Hanf}), but
with the order relation reversed. The usual order reversal issues
arise here between Boolean rings and their Stone spaces. Hanf along
with many other authors adopts the convention for PI Boolean algebras
whereby the unit element is of maximum type in $P$, which is a natural
choice when considering (for example) isomorphism classes of ideals
$(A)$ for $A\in R$. However, when working with partitions of the
Stone space, it seems more intuitive that finite partition elements
$X_{p}$ (for certain $p\in P^{d}$) should correspond to \emph{minimum
}elements $p\in P$. We have adopted the latter convention in our
definitions, consistent with the order direction in Ziegler's definition
of a good partition (\cite[Part~II,~1.C,D]{FlumZiegler}), with the
benefit for complete partitions that the open subsets of $W$ of the
form $X_{Q}$ correspond to upper subsets $Q$ of $P$, which are
the open subsets under the usual Alexandroff topology on $P$.
\end{remark}

The canonical partition $\mathscr{X}(W)$ of a primitive space $W$
is trim:
\begin{theorem}[Canonical trim partition]
\label{Thm:canonical trim}Let $W$ be a primitive $\omega$-Stone
space. Then $\mathscr{X}(W)$ is a trim $\mathscr{S}(W)$-partition
of $W$, $\Trim(\mathscr{X}(W))=\PI(R)$, and $A\in R$ is $p$-trim
iff $[A]=p$, where $R=\Co(W)$.
\end{theorem}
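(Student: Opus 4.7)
The plan is to verify directly that $\mathscr{X}(W)=\{X_{p}\mid p\in\mathscr{S}(W)\}^{*}$ satisfies every condition for a trim $\mathscr{S}(W)$-partition of $W$, deriving the characterisations $\Trim(\mathscr{X}(W))=\PI(R)$ and $t(A)=[A]$ as a by-product. I would begin with the pairwise disjointness of the $X_{p}$: if $w\in X_{p}\cap X_{p'}$ for distinct $p,p'$, the two neighbourhood bases at $w$ supply nested PI compact open neighbourhoods $V\supsetneq V'$ with $[V]=p$ and $[V']=p'$; since $[V']\neq[V]$, the PI property of $V$ applied to $V'\subsetneq V$ forces $(V-V')\cong(V)$, whence $(V)\cong(V')\times(V)$ and $p<p'$ in $\mathscr{S}(W)$. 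The symmetric construction inside $V'$ yields $p'<p$, so antisymmetry of $<$ forces $p=p'$, a contradiction.

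The heart of the argument is to show that every $A\in\PI(R)$ with $[A]=p$ is $p$-trim in $\mathscr{X}(W)$, simultaneously yielding the trim-set characterisations. This splits into four sub-steps: (i)~$A\cap X_{p}\neq\emptyset$, by iteratively refining a decreasing compact-open neighbourhood base at a suitable point of $A$ into a chain of PI sub-neighbourhoods all of iso class $p$, using that for every proper compact open $V\subsetneq A$ the PI property of $A$ delivers one of $V$ or $A-V$ as a PI set of class $p$; (ii)~$T(A)\subseteq p_{\uparrow}$, since any $q\in T(A)$ comes from a PI class-$q$ neighbourhood $B\subseteq A$, and $q\neq p$ forces $(A-B)\cong(A)$, so that $(A)\cong(A)\times(B)$ and $p<q$; (iii)~$T(A)\supseteq p_{\uparrow}$, because for each $q>p$ the relation $(A)\cong(A)\times(D)$ with $[D]=q$ supplies a summand $A''\subseteq A$ with $(A'')\cong(D)$, inside which sub-step~(i) produces a point of $X_{q}$; and (iv)~the singleton condition $|A\cap X_{p}|=1$ when $p\in\mathscr{S}(W)^{d}$, obtained by showing that two distinct class-$p$ points would give disjoint PI class-$p$ neighbourhoods $B_{1},B_{2}\subseteq A$, and that iterating the PI property of $A$ (feeding the output of each application back into sub-step~(i) to produce a growing family of disjoint class-$p$ sub-copies of $A$) ultimately forces $(A)\cong(A)^{2}$, contradicting $p\in\mathscr{S}(W)^{d}$.

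The defining conditions T1--T3 and the $P$-partition relation then follow from the characterisation of trim sets. T1 holds because primitivity gives every $w\in W$ a PI compact open neighbourhood base, each member of which is trim by sub-steps~(ii)--(iii); T2 is immediate from the definition of $X_{p}$ together with the characterisation; T3 follows from the definition of $X_{p}$. For $X_{p}^{\prime}\cap X=\bigcup_{q<p}X_{q}$, the inclusion $\subseteq$ applies sub-step~(ii) to PI class-$q$ neighbourhoods of a limit point of $X_{p}$ lying in $X_{q}$, and $\supseteq$ uses sub-step~(iii) to locate class-$p$ points inside class-$q$ PI neighbourhoods of $w\in X_{q}$. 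The main obstacle is sub-step~(iv): one application of the PI property of $A$ to $B_{1}\sqcup B_{2}$ yields only the weaker $(A)\cong(A)^{3}$ when it fails to give $(A)\cong(A)^{2}$ outright, and strengthening this requires a careful iteration and compactness-based extraction exploiting the self-similarity of $A$ under class-$p$ neighbourhood refinement.
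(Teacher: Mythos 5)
Your overall architecture matches the paper's: show every PI set $A$ is $[A]$-trim (non-emptiness of $A\cap X_{[A]}$ by a chain-and-compactness argument, $T(A)=[A]_{\uparrow}$ by applying the PI dichotomy to sub-neighbourhoods, the converse inclusion by splitting off a class-$q$ summand from $(A)\cong(A)\times(D)$), then read off T1--T3 and the $P$-partition relation. Sub-steps (i)--(iii) and your disjointness argument are sound (for (i), note that you cannot fix the point in advance, since the class-$p$ half of $\{V,A-V\}$ may be the half not containing your chosen point; the paper instead diagonalises against an enumeration of $\Co(W)$ and extracts the point as the intersection of the resulting chain --- a minor but necessary reorganisation of what you describe).

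However, there is a genuine gap at sub-step (iv), which you yourself flag as the main obstacle: from disjoint $B_{1},B_{2}\subseteq A$ with $[B_{1}]=[B_{2}]=[A]=a$, the PI dichotomy applied to $B_{1}\dotplus B_{2}$ gives either $a=a^{2}$ or $a=a^{3}$, and $a=a^{3}$ does \emph{not} imply $a=a^{2}$ in the monoid of isomorphism classes of countable Boolean rings (this is precisely Hanf's cube-but-not-square phenomenon), so the fallback branch proves nothing. Your proposed repair --- ``a careful iteration and compactness-based extraction exploiting the self-similarity of $A$'' --- is not carried out, and the obvious iteration only yields $a=a^{n}e_{n}$ for various residues $e_{n}$, which again does not close the loop. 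The paper disposes of exactly this point by citing~\cite[Lemma~3.14]{Apps-Stone}, which asserts that a PI set $A$ containing disjoint compact opens $B,C$ with $(B)\cong(C)\cong(A)$ satisfies $(A)\cong(B)\times(C)$; that lemma (or an equivalent back-and-forth argument) is the missing ingredient. Note also that the same unproved fact is needed not only for the singleton condition when $p\in\mathscr{S}(W)^{d}$ but for the reflexive case of the $P$-partition relation (showing $X_{p}\cap X_{p}^{\prime}\neq\emptyset$ forces $p<p$), so the gap propagates into your verification that $\mathscr{X}(W)$ is a $\mathscr{S}(W)$-partition as well. Until this square-absorption step is actually proved, the proposal is incomplete.
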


\begin{proof}
We claim first that if $A\in\PI(R)$ then $A\cap X_{[A]}\neq\emptyset$.
For let $\{B_{n}\mid n\geqslant1\}$ be an enumeration of $R$. Using
the fact that $A$ is PI, and writing $A_{n}=(A_{n}\cap B_{n})\dotplus(A_{n}-B_{n})$,
we can construct $A=A_{0}\supseteq A_{1}\supseteq A_{2}\supseteq\cdots$
such that $[A_{n}]=[A]$ and either $A_{n}\subseteq B_{n}$ or $A_{n}\cap B_{n}=\emptyset$.
By compactness and our choice of $A_{n}$, $\bigcap_{n\geqslant1}A_{n}$
is a singleton $\{w_{A}\}$, say, and now $w_{A}\in X_{[A]}$.

We show next that if $A\in\PI(R)$ then $A$ is $[A]$-trim. Choose
$w\in A\cap X_{q}$, where $q=[A]$. If $w\in X_{p}^{\prime}$, find
$x\in(A-\{w\})\cap X_{p}$, and find $B\in R$ such that $x\in B\subseteq A-\{w\}$
and $[B]=p$. If $q\neq p$, then $[A-B]=[A]$ and $q<p$. If however
$q=p$, then we can choose $C$ such that $w\in C\subseteq A-B$ and
$[C]=q$, and $A$ contains the compact open subset $B\dotplus C$
with $[B]=[C]=q$. It follows from~\cite[Lemma~3.14]{Apps-Stone}
that $(A)\cong(B)\times(C)$ and so $q<q$. The same argument shows
that if $q\nless q$ then $|A\cap X_{q}|=1$.

Conversely, if $q<p$, then we can write $A=B\dotplus C$, where $[B]=q$
and $[C]=p$, with $w\in B$ if $p=q$, and so $(A-\{w\})\cap X_{p}\neq\emptyset$.
Hence $A$ is $q$-trim.

If now $q\in\mathscr{S}(W)$ and $w\in X_{q}$, apply the above to
a neighbourhood base of $w$ of $q$-trim sets to obtain $w\in X_{p}^{\prime}$
iff $q<p$, so that $\mathscr{X}(W)$ is a $\mathscr{S}(W)$-partition
of~$W$.

Finally, if $A$ is $p$-trim then $A\in\PI(R)$ by~TP6 and so $[A]=p$.

The trim axioms now follow immediately: T1 as the PI sets generate
$R$, and T2 and T3 from the definition of $X_{p}$.
\end{proof}
\begin{remark}
This result is implicit in the proof of Apps~\cite[Theorem~3.12]{Apps-Stone},
which uses the duality between trim partitions and Hanf's ``structure
functions'', together with the fact (\cite[Theorem~5.4]{Hanf}) that
the map $u\colon\PI(R)\rightarrow\widetilde{\mathscr{S}(W)}:A\mapsto[A]$
``structures'' $R$ if $R$ is primitive.
\end{remark}

The structure diagram $\mathscr{S}(W)$ is known to have the following
properties in the case where $W$ is a compact PI primitive Boolean
space (interpreting these in the language of Boolean spaces rather
than Boolean rings):
\begin{theorem}
\label{Hanf}Let $W$ be a compact PI primitive $\omega$-Stone space.
\begin{enumerate}
\item (Hanf~\cite[Theorem~5.5]{Hanf}) $W$ is determined up to homeomorphism
by the isomorphism class of $\mathscr{S}(W)$;
\item \label{enu: simple POs}(e.g.\ see Pierce~\cite[section~3.8]{PierceMonk})
$\mathscr{S}(W)$ is a simple PO system. 
\end{enumerate}
\end{theorem}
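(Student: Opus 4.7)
The plan is to leverage the canonical trim partition $\mathscr{X}(W)$ together with Theorem~\ref{Thm:canonical trim} to reduce both parts to the machinery already developed; part~(b) is by far the cleaner, so I would tackle it first.

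For~(b), suppose $\alpha\colon\mathscr{S}(W)\to P$ is a surjective morphism with $p\alpha=q\alpha$ for some $p,q\in\mathscr{S}(W)$. By Theorem~\ref{Thm:canonical trim} I can pick $A,B\in\PI(R)$ with $[A]=p$ and $[B]=q$, so that $A\in\Trim_{p}(\mathscr{X}(W))$ and $B\in\Trim_{q}(\mathscr{X}(W))$. Proposition~\ref{Prop:trim image under morphism} produces a trim $P$-partition $\mathscr{Y}=\mathscr{X}(W)\alpha$ that regularly consolidates $\mathscr{X}(W)$, so by Proposition~\ref{Prop:regref}\ref{enu:regref3} both $A$ and $B$ are $p\alpha$-trim in $\mathscr{Y}$. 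Applying~TP6 in $\mathscr{Y}$ yields $(A)\cong(B)$, i.e.\ $p=[A]=[B]=q$, so $\alpha$ is injective and $\mathscr{S}(W)$ is simple.

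For~(a), let $W_{1},W_{2}$ be compact PI primitive $\omega$-Stone spaces with $\mathscr{S}(W_{1})\cong\mathscr{S}(W_{2})=:P$. Each $W_{i}$ carries its canonical trim $P$-partition $\mathscr{X}(W_{i})$, and compactness plus PI-ness forces $W_{i}$ itself to be $p_{0}$-trim, where $p_{0}=[1_{R_{i}}]$ is the unique minimum of $P$. The approach I would take is a back-and-forth construction: enumerate $R_{i}=\Co(W_{i})$ and build a type-preserving Boolean ring isomorphism $\varphi\colon R_{1}\to R_{2}$ stage by stage. At each stage, any compact open $A$ of known type $F=T(A)_{\min}$ can be split into trim pieces via~TP5, and~TP6 guarantees that any two $p$-trim sets in either space have isomorphic ideals, so a matching trim refinement on the other side can always be produced; Stone duality then yields the required homeomorphism. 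Alternatively, one can invoke the classification of bounded $\omega$-Stone spaces from~\cite{Apps-Stone}: a compact PI space is trivially bounded, and its associated extended PO system $(P,L,f)$ reduces to data determined by $\mathscr{S}(W)$ alone (namely $L=P$ and $f=1$ on $P^{d}$).

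The main obstacle is part~(a): organising the back-and-forth so that types are preserved at every stage is delicate, since when handling a newly enumerated compact open $A$ one must pick a trim refinement compatible both with the partial correspondence already built and with the finite foundation of $T(A)$ on the other side. This is essentially Hanf's original inductive argument, and is the genuinely non-trivial ingredient not already packaged into~TP5--TP6 or Proposition~\ref{Prop:trim image under morphism}.
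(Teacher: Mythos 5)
The paper offers no proof of this theorem: both parts are stated as citations of known results (Hanf for~(a), Pierce for~(b)), and the paper's real contribution is the subsequent generalisation of~(b) to arbitrary primitive spaces in Theorem~\ref{Thm:trim partitions}. Your argument for~(b) is correct and is, almost word for word, the paper's proof of that more general Theorem~\ref{Thm:trim partitions}: pass to $\mathscr{Y}=\mathscr{X}(W)\alpha$ via Proposition~\ref{Prop:trim image under morphism}, use Proposition~\ref{Prop:regref}\ref{enu:regref3} to see that $p$-trim and $q$-trim sets of $\mathscr{X}(W)$ both become $p\alpha$-trim in $\mathscr{Y}$, and conclude $[A]=[B]$ from~TP6; the reduction to surjective morphisms is justified by the duality with congruence relations in Remark~\ref{rem:morphisms}. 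So for~(b) you have in effect reproved the compact PI case by specialising an argument the paper gives later, with no circularity since Theorem~\ref{Thm:canonical trim} and Propositions~\ref{Prop:regref} and~\ref{Prop:trim image under morphism} do not depend on Theorem~\ref{Hanf}. For~(a), your back-and-forth sketch is not a proof as it stands --- you concede yourself that the type-preserving bookkeeping is the whole difficulty, and that is precisely the content of Hanf's original argument and of the uniqueness theorem~\cite[Theorem~6.1]{Apps-Stone} --- but your fallback route is sound and is exactly how the paper later obtains its classification theorem, statement~(a) there: a compact PI space carries a bounded trim $(P,P,f)$-partition in which $P=\mathscr{S}(W)$ has minimum $p_{0}=[1_{R}]$ (so $\{p_{0}\}$ is a finite foundation) and $f$ is forced to be $1$ on $P_{\min}^{d}\subseteq\{p_{0}\}$ because $W$ itself is $p_{0}$-trim; hence the extended PO system is determined by $P$ alone and~\cite[Theorem~6.1]{Apps-Stone} gives the homeomorphism. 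In short: the paper buys the theorem by citation, while you rederive it from the partition machinery; your version of~(b) is complete, and your version of~(a) is complete only if you lean on the external uniqueness theorem rather than on the unexecuted back-and-forth.
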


Theorem~\ref{Hanf}\ref{enu: simple POs} and the associated result
of Williams~\cite[Corollary~10]{Williams} can be extended to general
primitive spaces:
\begin{theorem}
\label{Thm:trim partitions} Let $W$ be a primitive $\omega$-Stone
space. Then $\mathscr{S}(W)$ is a simple PO system.
\end{theorem}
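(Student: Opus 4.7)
The plan is to exploit the canonical trim partition $\mathscr{X}(W)$ together with the machinery just developed in Proposition~\ref{Prop:trim image under morphism} and Proposition~\ref{Prop:regref}. Working via the congruence characterisation of simplicity, I would suppose for contradiction that $\sim$ is a non-trivial congruence on $P := \mathscr{S}(W)$, with $q_{1} \sim q_{2}$ but $q_{1} \neq q_{2}$. By Remark~\ref{rem:morphisms} this yields a surjective morphism $\alpha\colon P \to P/\sim$ with $q_{1}\alpha = q_{2}\alpha =: p$, which is precisely the setting needed to apply Proposition~\ref{Prop:trim image under morphism} to the trim $P$-partition $\mathscr{X}(W)$ of the primitive $\omega$-Stone space $W$. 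That yields a trim $(P/\sim)$-partition $\mathscr{X}(W)\alpha$ of $W$ with $\mathscr{X}(W) \prec_{\alpha} \mathscr{X}(W)\alpha$.

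Next, I would use Theorem~\ref{Thm:canonical trim} to pick, for each $i=1,2$, a compact open set $A_{i} \in \PI(R)$ (with $R = \Co(W)$) with $[A_{i}] = q_{i}$; by that theorem each $A_{i}$ lies in $\Trim_{q_{i}}(\mathscr{X}(W))$. The equivalence of~(a) and~(c) in Proposition~\ref{Prop:regref} then gives $A_{1}, A_{2} \in \Trim_{p}(\mathscr{X}(W)\alpha)$, i.e.\ both sets are $p$-trim in the consolidated partition. Now the punchline: applying TP6 of Proposition~\ref{basic properties-2 trim} to $\mathscr{X}(W)\alpha$ forces $(A_{1}) \cong (A_{2})$, and by definition of $\mathscr{S}(W)$ this means $q_{1} = [A_{1}] = [A_{2}] = q_{2}$, contradicting the assumption.

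To deduce simplicity in the (equivalent) morphism form, any morphism $\alpha\colon P \to Q$ factors through its image, and by the morphism condition the image is closed under upward comparisons in $Q$, so the induced surjective map onto the image is itself a morphism of PO systems; injectivity of $\alpha$ then follows from the congruence version already established.

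The main obstacle is conceptual rather than technical: one must recognise that the canonical partition is the \emph{right} partition to feed into Proposition~\ref{Prop:trim image under morphism}, because only in $\mathscr{X}(W)$ is the label of a trim set literally its isomorphism class, so that TP6 applied \emph{after} consolidation immediately collapses distinct labels. The rest is routine bookkeeping, and the reduction from arbitrary morphisms to surjective ones is straightforward once one notes that a morphism's image inherits the order relation compatibly.
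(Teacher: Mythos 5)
Your proposal is correct and follows essentially the same route as the paper: both form the consolidated partition $\mathscr{X}(W)\alpha$ via Proposition~\ref{Prop:trim image under morphism}, use Proposition~\ref{Prop:regref} to place the two trim sets in $\Trim_{p}(\mathscr{X}(W)\alpha)$, and invoke TP6 together with the fact that labels in $\mathscr{X}(W)$ are isomorphism classes to collapse $q_{1}=q_{2}$. The only cosmetic difference is that you phrase it as a contradiction via a non-trivial congruence, whereas the paper directly shows every surjective morphism out of $\mathscr{S}(W)$ is an isomorphism; these are interchangeable by Remark~\ref{rem:morphisms}.
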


\begin{proof}
Let $\mathscr{X}=\mathscr{X}(W)=\{X_{p}\mid p\in P\}^{*}$, where
$P=\mathscr{S}(W)$, and let $R=\Co(W)$. It is enough to show that
any surjective morphism $\alpha\colon P\rightarrow Q$ is an isomorphism. 

By Proposition~\ref{Prop:trim image under morphism}, there is a
trim $Q$-partition $\mathscr{Y}=\mathscr{X}\alpha=\{Y_{q}\mid q\in Q\}^{*}$
of~$W$ such that $\mathscr{X}$ regularly refines $\mathscr{Y}$
via $\alpha$. Suppose that $p,r\in P$ and $q\in Q$ are such that
$q=p\alpha=r\alpha$. Choose $A\in\Trim_{p}(\mathscr{X})$ and $B\in\Trim_{r}(\mathscr{X})$;
by Proposition~\ref{Prop:regref} $A,B\in\Trim_{q}(\mathscr{Y})$.
Hence by~TP6 $A$ and $B$ are homeomorphic, so $p=[A]=[B]=r$,
and $\alpha$ is an isomorphism.
\end{proof}
\begin{corollary}
\label{Cor:morphism to canonical}Let $W$ be a primitive $\omega$-Stone
space, $Q$ a PO system and $\mathscr{Y}$ a trim $Q$-partition of
$W$. Then $\mathscr{S}(W)\cong s(Q)$, and $\mathscr{Y}\prec_{\beta}\mathscr{X}(W)$,
where $\beta$ is the (unique) surjective morphism from $Q$ to $\mathscr{S}(W)$.
\end{corollary}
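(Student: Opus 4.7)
The plan is to construct the morphism $\beta$ explicitly from the trim data of $\mathscr{Y}$, verify it is surjective and order-preserving by showing $\mathscr{Y}\prec_\beta\mathscr{X}(W)$, and then deduce $\mathscr{S}(W)\cong s(Q)$ from the universal property of the simple image (Theorem~\ref{Thm:simple image}) together with Theorem~\ref{Thm:trim partitions}. The main obstacle I anticipate is the refinement step $Y_q\subseteq X_{\beta(q)}$: it will not be enough to note that a single $q$-trim neighbourhood of $y\in Y_q$ is $\beta(q)$-trim in $\mathscr{X}(W)$, because $X_{\beta(q)}$ is characterised by its points having an entire neighbourhood \emph{base} of $\beta(q)$-trim sets. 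The hereditary property of trim sets noted in the paragraph before Proposition~\ref{basic properties-2 trim} is exactly what supplies the missing base.

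First I would define $\beta\colon Q\rightarrow\mathscr{S}(W)$ by $\beta(q)=[A]$ for any $A\in\Trim_q(\mathscr{Y})$. This is well defined by TP6, and $\Trim_q(\mathscr{Y})$ is non-empty for every $q\in Q$ since each $Y_q$ is non-empty and its points have $q$-trim neighbourhoods by T1 and T3. To show $\mathscr{Y}\prec_\beta\mathscr{X}(W)$, take $y\in Y_q$ and a $q$-trim neighbourhood $A\in\Trim_q(\mathscr{Y})$: every compact open $B$ with $y\in B\subseteq A$ is again $q$-trim by the hereditary property, and hence $\beta(q)$-trim in $\mathscr{X}(W)$ by Theorem~\ref{Thm:canonical trim} (which identifies $[B]=[A]=\beta(q)$), so $y$ has a neighbourhood base of $\beta(q)$-trim sets; fullness T2 of $\mathscr{X}(W)$ then forces $y\in X_{\beta(q)}$. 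This gives the refinement, and regularity follows from Proposition~\ref{Prop:regref}(c), since by Theorem~\ref{Thm:canonical trim} any $q$-trim set of $\mathscr{Y}$ is $\beta(q)$-trim in $\mathscr{X}(W)$. Proposition~\ref{Prop:regrefprops}(a) then tells us $\beta$ is a surjective morphism.

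Finally, applying Theorem~\ref{Thm:simple image}(c) to $\beta$ yields a unique surjective morphism $\gamma\colon\mathscr{S}(W)\rightarrow s(Q)$ with $\beta\circ\gamma=\alpha$, where $\alpha\colon Q\to s(Q)$ is the canonical surjection. Because $\mathscr{S}(W)$ is simple by Theorem~\ref{Thm:trim partitions}, $\gamma$ is injective, and a bijective PO morphism is an isomorphism (its inverse preserves $<$ because $\gamma$ maps strict predecessors bijectively to strict predecessors). Hence $\mathscr{S}(W)\cong s(Q)$. Uniqueness of $\beta$ among surjective morphisms $Q\to\mathscr{S}(W)$ is then immediate: any other such $\beta'$ yields an isomorphism $\gamma'$ in the same way, and then $\gamma'\gamma^{-1}$ is an automorphism of $s(Q)$, hence trivial by Theorem~\ref{Thm:simple image}(b), forcing $\beta=\beta'$.
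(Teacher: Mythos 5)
Your proposal is correct and follows essentially the same route as the paper: define $q\beta$ as the isomorphism type of the $q$-trim sets (well-defined by TP6), deduce $\Trim_{q}(\mathscr{Y})\subseteq\Trim_{q\beta}(\mathscr{X}(W))$ from Theorem~\ref{Thm:canonical trim}, get $Y_{q}\subseteq X_{q\beta}$ from the neighbourhood-base/hereditary argument, obtain regularity via Proposition~\ref{Prop:regref} and surjectivity-plus-morphism via Proposition~\ref{Prop:regrefprops}\ref{enu:refine1-1}, and conclude $\mathscr{S}(W)\cong s(Q)$ from Theorems~\ref{Thm:trim partitions} and~\ref{Thm:simple image}. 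The only differences are that you spell out details the paper leaves implicit (the neighbourhood-base step, the bijective-morphism-is-isomorphism step, and the uniqueness of $\beta$).
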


\begin{proof}
Let $\mathscr{X}(W)=\{X_{p}\mid p\in P\}^{*}$, where $P=\mathscr{S}(W)$,
and let $\mathscr{Y}=\{Y_{q}\mid q\in Q\}^{*}$. 

For $q\in Q$, let $q\beta\in\mathscr{S}(W)$ be the isomorphism type
of the $q$-trim subsets of $W$ (using~TP6), so that $\Trim_{q}(\mathscr{Y})\subseteq\Trim_{q\beta}(\mathscr{X}(W))$,
as PI sets and trim sets are the same in $\mathscr{X}(W)$. Then $Y_{q}\subseteq X_{q\beta}$,
as elements of $Y_{q}$ have a neighbourhood base of $q$-trim sets,
and so $\mathscr{Y}$ regularly refines $\mathscr{X}(W)$ via $\beta$,
and $\beta$ is a surjective morphism (Proposition~\ref{Prop:regrefprops}\ref{enu:refine1-1}).
The first statement follows immediately from Theorem~\ref{Thm:trim partitions}
and Theorem~\ref{Thm:simple image}.
\end{proof}
\begin{remark}
So any trim partition of a primitive $\omega$-Stone space $W$ is
a regular refinement of the canonical trim partition (cf~Williams~\cite[Theorem~9]{Williams}),
and the PO system $\mathscr{S}(W)$ can be determined from the structure
of any trim partition of $W$. 
\end{remark}

We can further characterise the canonical partition of a primitive
space:
\begin{theorem}
\label{Thm:equiv simple}Let $P$ be a countable PO system and $\mathscr{Y}$
a trim $P$-partition of the (primitive) $\omega$-Stone space $W$.
Then the following are equivalent:
\begin{enumerate}
\item \label{enu: equiv1}$\mathscr{Y}=\mathscr{X}(W)$ (relabelling if
required);
\item \label{enu:equiv2}$P\cong\mathscr{S}(W)$;
\item \label{enu: equiv3}$P$ is simple.
\end{enumerate}
\end{theorem}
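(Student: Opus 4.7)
The plan is to close the cycle $(1)\Rightarrow(2)\Rightarrow(3)\Rightarrow(1)$, leveraging Corollary~\ref{Cor:morphism to canonical} and Theorem~\ref{Thm:trim partitions} to handle most of the work.

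For $(1)\Rightarrow(2)$, suppose $\mathscr{Y}=\mathscr{X}(W)$ after a relabelling bijection $\sigma\colon P\to\mathscr{S}(W)$, so that $Y_p=X_{\sigma(p)}$ for each $p\in P$. I would observe that both partitions satisfy the $P$-partition axiom with respect to their own PO systems, giving
\[
\bigcup_{q<p}Y_q=Y_p^{\prime}\cap Y_P=X_{\sigma(p)}^{\prime}\cap X_{\mathscr{S}(W)}=\bigcup_{r<\sigma(p)}X_r=\bigcup_{r<\sigma(p)}Y_{\sigma^{-1}(r)}.
\]
Since the $Y_q$ are disjoint and non-empty, this forces $\{q\in P\mid q<p\}=\sigma^{-1}(\{r\in\mathscr{S}(W)\mid r<\sigma(p)\})$ for every $p$, so $\sigma$ is an isomorphism of PO systems.

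For $(2)\Rightarrow(3)$, I simply invoke Theorem~\ref{Thm:trim partitions}, which shows that $\mathscr{S}(W)$ is a simple PO system; hence any PO system isomorphic to it is also simple.

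For $(3)\Rightarrow(1)$, which I expect to be the main (though still straightforward) step, I would apply Corollary~\ref{Cor:morphism to canonical} to obtain a surjective morphism $\beta\colon P\to\mathscr{S}(W)$ with $\mathscr{Y}\prec_{\beta}\mathscr{X}(W)$. Because $P$ is simple, the defining property of simplicity forces $\beta$ to be injective, hence a bijective morphism, hence an isomorphism of PO systems. Proposition~\ref{Prop:regrefprops}\ref{enu:refine1-5} then yields $\mathscr{Y}=\mathscr{X}(W)$ after relabelling via $\beta$, completing the cycle. No serious obstacle arises, since the heavy lifting (existence and properties of $\beta$, simplicity of $\mathscr{S}(W)$, and the rigidity of regular refinements via an isomorphism) has already been established earlier in the section.
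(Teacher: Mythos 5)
Your proposal is correct and follows essentially the same route as the paper: (1)$\Rightarrow$(2) from the definition of $\mathscr{X}(W)$ (your elaboration via the $P$-partition axiom is a valid way to make the "immediate" step explicit), (2)$\Rightarrow$(3) from Theorem~\ref{Thm:trim partitions}, and (3)$\Rightarrow$(1) via Corollary~\ref{Cor:morphism to canonical}, simplicity forcing the morphism to be an isomorphism, and Proposition~\ref{Prop:regrefprops}\ref{enu:refine1-5}.
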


\begin{proof}
\ref{enu: equiv1}~$\Rightarrow$~\ref{enu:equiv2}: immediate from
the definition of $\mathscr{X}(W)$.

\ref{enu:equiv2}~$\Rightarrow$~\ref{enu: equiv3}: immediate from
Theorem~\ref{Thm:trim partitions}.

\ref{enu: equiv3}~$\Rightarrow$~\ref{enu: equiv1}: by Corollary~\ref{Cor:morphism to canonical},
there is a surjective morphism $\alpha\colon P\rightarrow\mathscr{S}(W)$
such that $\mathscr{Y}\prec_{\alpha}\mathscr{X}(W)$. But $P$ is
simple, so $\alpha$ is an isomorphism. Hence by Proposition~\ref{Prop:regrefprops}\ref{enu:refine1-5}
$\mathscr{Y}=\mathscr{X}(W)$ after relabelling.
\end{proof}

\subsection{Extended PO systems and regular refinements of a partition}

We have seen in Theorem~\ref{Prop:morphism equiv classes} that regular
consolidations of a trim partition correspond to surjective morphisms
of the underlying PO system. We now address the question of when a
trim $P$-partition of a primitive space can be regularly refined
to a trim $Q$-partition. For this, we will need to consider the compactness
structure and size of the finite elements of the $P$-partition, which
cannot generally be deduced from $P$ itself, and the next definitions
provide a convenient way of describing these. We write $P_{\Delta}$
for $\{p\in P\mid\{p\}\text{ has a finite foundation}\}$.
\begin{definition}[\emph{Extended PO systems}]
\label{def:extended POS}

An \emph{extended PO system }is a triple $(P,L,f)$, where $P$ is
a PO system, $L$ is a lower subset of $P_{\Delta}$, and $f\colon L_{\min}^{d}\rightarrow\mathbb{N}_{+}$,
where $L_{\min}^{d}=L_{\min}\cap P^{d}$. 

Let $W$ be a primitive Stone space and $(P,L,f)$ an extended PO
system. We will say that a trim $P$-partition $\mathscr{Z}=\{Z_{p}\mid p\in P\}{}^{*}$
of $W$ is a trim \emph{$(P,L,f)$-partition of $W$ }if $\overline{Z_{p}}$
is compact precisely when $p\in L$, and $|Z_{p}|=f(p)$ if $p\in L_{\min}^{d}$;
and that $\mathscr{Z}$ is a \emph{bounded trim partition, }or \emph{a
bounded trim $(P,L,f)$-partition, }of $W$ if in addition $Z_{L}$
is relatively compact (i.e.\ $\overline{Z_{L}}$ is compact). 

We will say that $W$ is \emph{bounded }if $\mathscr{X}(W)=\{X_{p}\mid p\in\mathscr{S}(W)\}^{*}$
is a bounded $(\mathscr{S}(W),L_{W},f_{W})$-partition of $W$, where
$L_{W}=\{p\in\mathscr{S}(W)\mid\overline{X_{p}}\text{ is compact}\}$
and $f_{W}(p)=|X_{p}|$ for $p\in(L_{W}){}_{\min}^{d}$.

Two extended PO systems $(P,L,f)$ and $(Q,M,g)$ are isomorphic,
and we write $(P,L,f)\cong(Q,M,g)$, if there is an isomorphism $\alpha\colon P\rightarrow Q$
such that $L\alpha=M$ and $g(p\alpha)=f(p)$ for all $p\in L_{\min}^{d}$
(noting that if $L\alpha=M$ then $L_{\min}^{d}\alpha=M_{\min}^{d}$).
\end{definition}

\begin{remark}
The definitions are consistent, as if $\overline{Z_{p}}$ is compact
then $p\in P_{\Delta}$ by~\cite[Proposition~2.16(iii)]{Apps-Stone},
so that $L$ is indeed a lower subset of $P_{\Delta}$; and $f$ is
well-defined because $Z_{p}$ is finite iff $p\in L_{\min}^{d}$ (as
$Z_{p}$ is closed for $p\in P_{\min}$, using~T1 and~T2).

We will need to use the following Lemma twice:
\end{remark}

\begin{lemma}
\label{Lemma:bounded FF}If $P$ is a countable PO system, $\{Z_{p}\mid p\in P\}{}^{*}$
is a trim $P$-partition of the primitive $\omega$-Stone space $W$,
and $N$ is a subset of $P$ such that $\overline{Z_{N}}$ is compact,
then $N$ has a finite foundation.
\end{lemma}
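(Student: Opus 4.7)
The strategy is a direct compactness argument: cover $\overline{Z_N}$ by finitely many trim sets and read off a finite foundation from their types. By~T1 every point of $W$ has a neighbourhood base of trim compact opens, so the family $\{A\in\Trim(\mathscr{Z})\mid A\cap\overline{Z_N}\neq\emptyset\}$ is an open cover of $\overline{Z_N}$. Compactness yields a finite subcover $A_1,\ldots,A_k$ with $A_i\in\Trim_{q_i}(\mathscr{Z})$ for certain $q_i\in\widehat{P}$, and I will show that $F=\{q_1,\ldots,q_k\}$ is a finite foundation of~$N$.

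To see $F\subseteq N_{\downarrow}$, note that each $A_i$ is open and meets $\overline{Z_N}$, hence meets $Z_N$ itself, since a point of $\overline{Z_N}\setminus Z_N$ is a limit point of $Z_N$ and so every open neighbourhood of it meets $Z_N$. Thus $A_i\cap Z_n\neq\emptyset$ for some $n\in N$, giving $n\in T_{\mathscr{Z}}(A_i)=(q_i)_{\uparrow}$, whence $q_i\leqslant n$ and $q_i\in N_{\downarrow}$.

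To see $N_{\downarrow}\subseteq F_{\uparrow}$, take $r\in N_{\downarrow}$ and pick $n\in N$ with $r\leqslant n$. By the partition convention $Z_r\neq\emptyset$, and the $P$-partition property $Z_n'\cap Z_P=\bigcup_{q<n}Z_q$ forces $Z_r\subseteq Z_n\cup Z_n'=\overline{Z_n}\subseteq\overline{Z_N}$ (the case $r=n$ being trivial). Picking $z\in Z_r$ places $z\in A_i$ for some $i$, whence $r\in T_{\mathscr{Z}}(A_i)=(q_i)_{\uparrow}$, i.e.\ $q_i\leqslant r$, as required.

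The only point requiring any care is the first step, namely that each $A_i$ in the finite cover meets $Z_N$ (not merely $\overline{Z_N}$) so that its type lies in $N_{\downarrow}$; this is immediate from openness of the~$A_i$. Otherwise the argument is a routine finite-cover computation once one has the right candidate $F$.
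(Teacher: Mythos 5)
Your proof is correct and follows essentially the same route as the paper's: cover the compact set $\overline{Z_{N}}$ by finitely many trim sets and take the types of those sets as the finite foundation, checking both inclusions $F\subseteq N_{\downarrow}$ and $N_{\downarrow}\subseteq F_{\uparrow}$ via the identity $T_{\mathscr{Z}}(A)=t(A)_{\uparrow}$ for trim $A$. The only difference is cosmetic: the paper first encloses $Z_{N}$ in a single compact open $A$ and invokes~TP5 to split $A$ into trim pieces indexed by $T(A)_{\min}$, whereas you obtain the finite family directly from~T1 and a finite subcover, which is equally valid.
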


\begin{proof}
Find a compact open $A$ such that $Z_{N}\subseteq A$. By~TP5 we
can express $A$ as a union of $p$-trim sets for $p\in T(A)_{\min}$;
and it follows that $T(A)_{\min}\cap N_{\downarrow}$ is a finite
foundation for $N$.
\end{proof}
We now consider the types of partition that can regularly refine a
given partition: specifically, for which PO systems $Q$ can a given
bounded trim $(P,L,f)$-partition be regularly refined by a trim $(Q,M,g)$-partition
for some choice of $M$ and $g$? As above, there must be a surjective
morphism $\alpha\colon Q\rightarrow P$ for this possibility even
to arise.
\begin{proposition}
\label{Prop:morphism and PLf}If $\alpha\colon Q\rightarrow P$ is
a surjective morphism between the PO systems $P$ and $Q$, and $\mathscr{Y}$
is a bounded trim $(Q,M,g)$-partition of $W$, then $\mathscr{Y}\alpha$
is a bounded trim $(P,L,f)$-partition of $W$, where $L=\{p\in P\mid Q(p)\subseteq M\}$
and $f(p)=\sum_{q\in Q(p)}g(q)$ for $p\in L_{\min}^{d}$, writing
$Q(p)=\{p\}\alpha^{-1}$.
\end{proposition}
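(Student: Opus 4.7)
By Proposition~\ref{Prop:trim image under morphism}, $\mathscr{Y}\alpha$ is a trim $P$-partition $\{Z_p\mid p\in P\}^*$ with $Z_p=\bigcup_{q\in Q(p)}Y_q$, a disjoint union. The plan is to verify in turn: (i) $\overline{Z_p}$ is compact precisely when $p\in L$; (ii) $L$ is a lower subset of $P_\Delta$; (iii) $Z_L$ is relatively compact; and (iv) $f$ takes values in $\mathbb{N}_+$ with $|Z_p|=f(p)$ for $p\in L_{\min}^d$.

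For (i), if $Q(p)\subseteq M$ then $Z_p\subseteq Y_M$, which is relatively compact by boundedness of $\mathscr{Y}$, so $\overline{Z_p}$ is compact. Conversely, if $\overline{Z_p}$ is compact then for each $q\in Q(p)$, $\overline{Y_q}$ is a closed subset of $\overline{Z_p}$ and hence compact, forcing $q\in M$. Lemma~\ref{Lemma:bounded FF} applied to $\{p\}$ then gives $L\subseteq P_\Delta$. For (ii), given $p\in L$ and $p'<p$, for any $q'\in Q(p')$ the morphism property of $\alpha$ produces $q>q'$ with $q\alpha=p$, so $q\in Q(p)\subseteq M$; lowerness of $M$ in $Q_\Delta$ then gives $q'\in M$, whence $p'\in L$. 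For (iii), $Z_L\subseteq Y_M$ is relatively compact.

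The delicate part is (iv). Given $p\in L_{\min}^d$, I would first observe $p\in P_{\min}$: any $p'<p$ would lie in $L$ by the lowerness proved in (ii), contradicting the minimality of $p$ in $L$. For each $q\in Q(p)$, the morphism property rules out both $q<q$ (else $p<p$) and $r<q$ for any $r\in Q$ (else $r\alpha<p$ in $P$, contradicting $p\in P_{\min}$ since $p\in P^d$), so $q\in Q_{\min}^d$; combined with $Q(p)\subseteq M$, this gives $q\in M_{\min}^d$ and $|Y_q|=g(q)\in\mathbb{N}_+$. Finally, $Z_p$ is closed in $W$ (using T1 and T2, as $p\in P_{\min}$) and contained in $\overline{Y_M}$, hence compact; and every point $w\in Z_p$ has a $p$-trim neighbourhood $A$ with $|A\cap Z_p|=1$ (since $p\in P^d$), so $w$ is isolated in $Z_p$. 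Thus $Z_p$ is compact and discrete, hence finite, and $|Z_p|=\sum_{q\in Q(p)}|Y_q|=\sum_{q\in Q(p)}g(q)=f(p)$.

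The main obstacle is ensuring $f(p)$ is finite: a priori the fibre $Q(p)$ could be infinite, but the chain of implications $p\in L_{\min}^d\Rightarrow p\in P_{\min}\Rightarrow Q(p)\subseteq Q_{\min}$, together with compactness and discreteness of $Z_p$, forces the finiteness that makes $f$ a well-defined map into $\mathbb{N}_+$.
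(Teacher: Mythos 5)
Your overall route is the paper's: pass to $\mathscr{Y}\alpha$ via Proposition~\ref{Prop:trim image under morphism}, test compactness of the fibre unions against $M$, and count the finite parts by showing $Q(p)\subseteq M_{\min}^{d}$ for $p\in L_{\min}^{d}$. Your treatment of the counting step is in fact more detailed than the paper's, which simply asserts $Q(p)\subseteq M_{\min}^{d}$ without derivation; your deduction of this from the morphism property, and your compact-plus-discrete argument that $Z_p$ (hence the fibre $Q(p)$, hence $f(p)$) is finite, are correct and address the one point where well-definedness of $f$ could fail.

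There is, however, one genuine inaccuracy at the very first step. The trim $P$-partition $\mathscr{Y}\alpha$ produced by Proposition~\ref{Prop:trim image under morphism} is \emph{not} $\{Z_p\mid p\in P\}^*$ with $Z_p=\bigcup_{q\in Q(p)}Y_q$: the construction there first forms $\mathscr{Z}=\{Z_p\}^*$, which satisfies T1 and T3 but need not be full, and then enlarges each $Z_p$ to the set $U_p$ of all points having a neighbourhood base of $p$-trim sets, setting $\mathscr{Y}\alpha=\{U_p\}^*$. (The remark following that proposition shows the enlargement can be strict.) So every property you verify is verified for $\mathscr{Z}$ rather than for $\mathscr{Y}\alpha$. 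The repair is short and is exactly the point the paper makes: $Z_p\subseteq U_p\subseteq\overline{Z_p}$, so $\overline{U_p}=\overline{Z_p}$ (hence the compactness criterion and the boundedness claim $\overline{U_L}\subseteq\overline{Y_M}$ transfer), and when $Z_p$ is finite it is closed, so $U_p=Z_p$ and the cardinality count transfers too. Without some such remark the argument as written establishes the conclusion for the wrong partition.
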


\begin{proof}
Let $\mathscr{Y}\alpha=\{U_{p}\mid p\in P\}^{*}$, and let $Z_{p}=\bigcup_{q\in Q(p)}Y_{q}$.
Then $\overline{Z_{p}}$ is compact iff $Q(p)\subseteq M$, as $\overline{Y_{M}}$
is compact; and $|Z_{p}|=\sum_{q\in Q(p)}g(q)$ for $p\in L_{\min}^{d}$,
as if $p\in L_{\min}^{d}$ then $Q(p)\subseteq M_{\min}^{d}$. These
results then also hold for $\mathscr{Y}\alpha$, since $U_{p}\subseteq\overline{Z_{p}}$
(by considering the construction in the proof of Proposition~\ref{Prop:trim image under morphism}),
and $\mathscr{Y}\alpha$ is bounded as $\overline{Z_{L}}\subseteq\overline{Y_{M}}$.
\end{proof}
We will also need the following Lemma.
\begin{lemma}
\label{Prop: direct sum}
\begin{enumerate}
\item \label{enu:sum1}A countable Boolean ring is primitive iff it is the
direct sum of countable primitive Boolean algebras;
\item \label{enu:sum2}let $P$ be a PO system and $P_{n}$ an upper subset
of $P$ for each $n$ such that $P=\bigcup_{n\geqslant1}P_{n}$. Suppose
$W=W_{1}\dotplus W_{2}\dotplus\ldots$, where each $W_{n}$ is a clopen
subset of the $\omega$-Stone space $W$ and admits a trim $P_{n}$-partition
$\mathscr{Z}_{n}=\{Z_{np}\mid p\in P_{n}\}^{*}$. Then $\mathscr{Y}=\{Y_{p}\mid p\in P\}^{*}$
is a trim $P$-partition of $W$, where $Y_{p}=\bigcup_{n\geqslant1}\{Z_{np}\mid p\in P_{n}\}$.
\end{enumerate}
\end{lemma}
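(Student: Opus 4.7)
The plan treats the two parts separately, with part (a) reduced to a Stone-space decomposition and part (b) proved by a direct verification exploiting the clopenness of each $W_n$. For part (a), the ``if'' direction is immediate: if $R = \bigoplus_n R_n$ with each $R_n$ countable and primitive, then $R$ is countable, every $A \in R$ has finite support, and each component is a disjoint union of finitely many PI elements of $R_n$; these remain PI in $R$ because for $A \in R_n$ the principal ideal $(A)$ is the same whether computed in $R_n$ or in $R$. For ``only if'', I pass to the Stone space $W$ of $R$, which is locally compact and second countable; enumerating a countable basis of compact opens $\{A_n\}$ and setting $W_n = A_n \setminus (A_1 \cup \cdots \cup A_{n-1})$ gives $W = \bigsqcup_n W_n$ with each $W_n$ compact clopen. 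Any compact open subset of $W$ meets only finitely many $W_n$ by compactness, so $R \cong \bigoplus_n \Co(W_n)$; each $\Co(W_n)$ is then a countable Boolean algebra, and primitivity is inherited because a PI decomposition of $A \in \Co(W_n) \subseteq R$ automatically consists of subsets of $A$, hence elements of $\Co(W_n)$, and the PI property is intrinsic to the principal ideal generated.

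For part (b), the partition property is routine: the $W_n$ are disjoint clopens, each $Z_{np} \subseteq W_n$, and within each $W_n$ the $Z_{np}$ are pairwise disjoint. I then verify the $P$-partition identity $Y_p^{\prime} \cap Y_P = \bigcup_{q<p} Y_q$. The key observation is that each $W_n$ is \emph{open}, so any sequence converging to $y \in W_m$ is eventually in $W_m$, giving $Y_p^{\prime} \cap W_m = Z_{mp}^{\prime}$ when $p \in P_m$ and empty otherwise. Given $y \in Y_p^{\prime} \cap Y_P \cap W_m$, the $P_m$-partition property of $\mathscr{Z}_m$ places $y$ in $Z_{mq}$ for some $q < p$ with $q \in P_m$, whence $y \in Y_q$. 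Conversely, if $y \in Y_q \cap W_m$ with $q < p$, then $q \in P_m$, and since $P_m$ is an \emph{upper} subset of $P$ we get $p \in P_m$, so $\mathscr{Z}_m$ places $y$ in $Z_{mp}^{\prime} \subseteq Y_p^{\prime}$.

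The three trim axioms transfer by the same localisation argument. For $q \in P_n$ the up-set $q_{\uparrow}$ computed in $P$ equals $q_{\uparrow}$ computed in $P_n$ (again by upperness), and for any compact open $A \subseteq W_n$ one has $A \cap Y_p = A \cap Z_{np}$, so $T_{\mathscr{Y}}(A) = T_{\mathscr{Z}_n}(A)$. Hence every $q$-trim set of $\mathscr{Z}_n$ is also $q$-trim in $\mathscr{Y}$, which yields T1 (any $y \in W_m$ has a neighbourhood base of $P_m$-trim sets inside the open set $W_m$), T2 (a neighbourhood base of $p$-trim sets at $y \in W_m$ localises to one in $\mathscr{Z}_m$, so $y \in Z_{mp} \subseteq Y_p$), and T3 (cleanness transferred from $\mathscr{Z}_m$). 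The main subtlety throughout is the asymmetry in the upper-subset hypothesis: $q < p$ with $p \in P_m$ need not force $q \in P_m$, but the converse does, so each implication has to be used in the correct direction when establishing the two inclusions of the $P$-partition identity.
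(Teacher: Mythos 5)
Your proof is correct and takes essentially the same approach as the paper: part (a) by decomposing into disjoint clopen pieces (the paper phrases the ``only if'' direction algebraically, finding disjoint generators $\{A_n\}$ of $R$, which is exactly what your Stone-space construction produces), and part (b) by localising the $P$-partition identity and the trim axioms to each clopen $W_n$, using the upper-subset hypothesis in the same places. No gaps.
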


\begin{proof}
\ref{enu:sum1} If $R$ is a countable primitive Boolean ring without
a $1$, then by a routine argument we can find disjoint $\{A_{n}\in R\mid n\geqslant1\}$
that generate $R$. It follows easily that $(A_{n})$ is a primitive
Boolean algebra and that $R=\bigoplus{}_{n\geqslant1}(A_{n})$. Conversely,
if $R=\bigoplus_{n\geqslant1}R{}_{n}$ with each $R_{n}$ a primitive
Boolean algebra, then each $A\in R$ has the form $A=A_{1}\dotplus\cdots\dotplus A_{N}$
with $A_{n}\in R_{n}$, for some $N$, and so is the sum of PI elements.

\ref{enu:sum2} Let $Z_{n}=\bigcup_{p\in P_{n}}Z_{np}$ and $Y=\bigcup_{p\in P}Y_{p}=\bigcup_{n\geqslant1}Z_{n}$.
Then $Y_{p}^{\prime}\cap Y\cap W_{n}=Z_{np}^{\prime}\cap Z_{n}=\bigcup_{q\in P_{n}}\{Z_{nq}\mid q<p\}$.
So $Y_{p}^{\prime}\cap Y=\bigcup_{q\in P}\{Y_{q}\mid q<p\}$ and $\mathscr{Y}$
is a $P$-partition of $W$. Moreover, if $A\subseteq W_{n}$ and
$p\in P_{n}$, then $A\in\Trim_{p}(\mathscr{Y})$ iff $A\in\Trim_{p}(\mathscr{Z}_{n})$,
as $P_{n}$ is an upper subset of $P$. The trim conditions T1 to
T3 now follow easily for $W$ as they are true in each $W_{n}$.
\end{proof}
\begin{theorem}
\label{Thm:refinement condition}Let $(P,L,f)$ be a countable extended
PO system, $\mathscr{Z}$ a bounded trim $(P,L,f)$-partition of the
$\omega$-Stone space $W$, $Q$ a countable PO system, and $\alpha\colon Q\rightarrow P$
a surjective morphism. Then $\mathscr{Z}$ can be regularly refined
to a trim $Q$-partition of $W$ iff both of the following hold:
\begin{enumerate}
\item \label{enu:cond2}$L\alpha^{-1}$ has a finite foundation; 
\item \label{enu:cond3}$|Q(p)|\leqslant f(p)$ for $p\in L_{\min}^{d}$,
where $Q(p)=\{q\in Q\mid q\alpha=p\}$.
\end{enumerate}
\end{theorem}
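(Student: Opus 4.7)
My plan for the forward direction is direct: assume $\mathscr{Y}=\{Y_q \mid q\in Q\}^{*}$ is a trim $Q$-partition of $W$ with $\mathscr{Y}\prec_{\alpha}\mathscr{Z}$. From $Y_q \subseteq Z_{q\alpha}$ I would obtain $Y_{L\alpha^{-1}} \subseteq Z_L$, hence $\overline{Y_{L\alpha^{-1}}}$ is compact; Lemma~\ref{Lemma:bounded FF} applied to $\mathscr{Y}$ with $N=L\alpha^{-1}$ then yields~(a). For~(b), for each $p\in L_{\min}^{d}$ the finite set $Z_p$ of cardinality $f(p)$ contains the pairwise-disjoint non-empty pieces $\{Y_q \mid q\in Q(p)\}$ (since each $Y_q$ is non-empty by the partition convention and $Y_q \subseteq Z_p$ for $q\in Q(p)$), giving $|Q(p)|\leq f(p)$.

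For the converse, I would first set $M=L\alpha^{-1}$, noting that $M$ is a lower subset of $Q$ because $L$ is lower in $P$ and $\alpha$ preserves order (Remark~\ref{rem:Refinemt}); condition~(a) then shows $M\subseteq Q_\Delta$, since for each $q\in M$, intersecting the finite foundation of $M$ with $q_{\downarrow}$ gives a finite foundation of $\{q\}$. Using the morphism property, one checks directly that $M_{\min}=L_{\min}\alpha^{-1}$, so $Q(p)\subseteq M_{\min}^{d}$ for each $p\in L_{\min}^{d}$. Applying~(b), I would choose $g\colon M_{\min}^{d}\to\mathbb{N}_{+}$ with $\sum_{q\in Q(p)}g(q)=f(p)$ for each $p\in L_{\min}^{d}$, and $g\equiv 1$ elsewhere. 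Proposition~\ref{Prop:morphism and PLf} then guarantees that any trim $(Q,M,g)$-partition of $W$ consolidates via $\alpha$ to a bounded trim $(P,L,f)$-partition of $W$.

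The construction of such a trim $(Q,M,g)$-partition $\mathscr{Y}$ of $W$ is the main task. I would use boundedness to pick a compact open $A_0\supseteq Z_L$, and express $W\setminus A_0$ (via TP5 applied to successive compact opens exhausting it) as a countable disjoint union of compact open $p_n$-trim sets $A_n$ ($n\geq 1$) of $\mathscr{Z}$, each with $p_n\notin L$ since $L$ is a lower subset and $A_n\cap Z_L=\emptyset$. Setting $Q_0=T_{\mathscr{Z}}(A_0)\alpha^{-1}$ and $Q_n=Q(p_n)_{\uparrow}$ for $n\geq 1$ gives upper subsets of $Q$ with $\bigcup_n Q_n = Q$: if $q\alpha\notin T_{\mathscr{Z}}(A_0)$ then $Z_{q\alpha}\subseteq W\setminus A_0$, forcing $q\alpha\geq p_n$ for some $n$. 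On each compact primitive $A_n$ I would construct a trim $Q_n$-partition refining $\mathscr{Z}|_{A_n}$ via $\alpha|_{Q_n}$: for $n\geq 1$, the lowerness of $L$ forces $M\cap Q_n=\emptyset$, so no $(M,g)$-constraint applies and existence follows from the bounded-case trim-partition results of~\cite{Apps-Stone} together with Theorem~\ref{Hanf}; for $n=0$ the same tools apply, with condition~(b) precisely ensuring that the $f(p)$ points of $Z_p\subseteq A_0$ (for $p\in L_{\min}^{d}$) can be distributed among the types $Q(p)$ with multiplicities $g(q)$. Lemma~\ref{Prop: direct sum}(b) then assembles these into a trim $Q$-partition $\mathscr{Y}$ of $W$ with $Y_q\subseteq Z_{q\alpha}$ throughout, so $\mathscr{Y}\alpha=\mathscr{Z}$ by construction and $\mathscr{Y}\prec_{\alpha}\mathscr{Z}$ as required. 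The main obstacle will be this piece-by-piece construction on each $A_n$---especially on $A_0$, where the finite fibres $Z_p$ for $p\in L_{\min}^{d}$ must be split correctly and the trim-types of $\mathscr{Y}$ must consolidate consistently across the compact base to the types of $\mathscr{Z}|_{A_0}$---and it leans decisively on the existence and classification results for bounded trim partitions of compact primitive $\omega$-Stone spaces from the earlier paper.
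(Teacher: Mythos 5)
Your necessity argument and the overall skeleton of your sufficiency argument (a compact core containing $Z_{L}$, a countable disjoint family of trim pieces covering the rest, piecewise use of the compact case, reassembly via Lemma~\ref{Prop: direct sum}) match the paper. However, the way you distribute $Q$ over the pieces has a genuine gap which the paper's proof is specifically engineered to avoid. Assigning the whole upper set $Q_{n}=Q(p_{n})_{\uparrow}$ to the single compact piece $A_{n}$ fails for three reasons. First, $Q(p_{n})$ may be infinite (for instance an infinite antichain), in which case $Q(p_{n})_{\uparrow}$ has no finite foundation and the compact piece $A_{n}$ admits no trim $Q(p_{n})_{\uparrow}$-partition at all. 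Second, if $p_{n}\in P^{d}$ then $|A_{n}\cap Z_{p_{n}}|=1$, so this single point cannot be shared among two or more classes $Y_{q}$ with $q\in Q(p_{n})$. Third, your covering claim $\bigcup_{n}Q_{n}=Q$ is false: from $q\alpha\geqslant p_{n}$ you cannot conclude $q\in Q(p_{n})_{\uparrow}$, because a morphism only controls the fibres \emph{above} a point, not below it. Concretely, take $P=\{c,d\}$ with $c<d$, $c<c$, $d<d$ and $L=\emptyset$ (so conditions~(a) and~(b) are vacuous), and $Q=\{c_{1},d_{1},d_{2}\}$ with $c_{1}<d_{1}$, $c_{1}<c_{1}$, $d_{1}<d_{1}$, $d_{2}<d_{2}$ and $d_{2}$ otherwise incomparable; the map $c_{1}\mapsto c$, $d_{i}\mapsto d$ is a surjective morphism. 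If $W$ is a disjoint union of $c$-trim compact opens, your exhaustion makes every $A_{n}$ $c$-trim, so every $Q_{n}=\{c_{1}\}_{\uparrow}=\{c_{1},d_{1}\}$ and $d_{2}$ is never assigned to any piece, even though the theorem (and the paper's construction) does produce a trim $Q$-partition here.

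The paper repairs exactly these points. Its Step~2 first splits the pieces further (using $[B]\cong[B]\times[A]$ for $s$-trim $A$ with $s$ above the type of $B$) so that for \emph{every} $r\in P-L$ there are infinitely many $r$-trim pieces; Step~3 then assigns to each $p$-trim piece only the principal upper set $h_{p}(n)_{\uparrow}$ generated by a \emph{single} element of $Q(p)$, where $h_{p}\colon\mathbb{N}\rightarrow Q(p)$ is a surjection with infinite fibres. This keeps each local target PO system finitely founded with a single minimal (possibly discrete) root, and the covering of $Q$ holds because each $q$ with $q\alpha\in P-L$ is itself the chosen root $h_{q\alpha}(n)$ for infinitely many $n$. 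Analogous care is needed on the compact core: the paper works with $M_{\uparrow}$ for a finite foundation $M$ of $L\alpha^{-1}$ rather than with $T_{\mathscr{Z}}(A_{0})\alpha^{-1}$, which need not have a finite foundation (again because preimages of elements of $T_{\mathscr{Z}}(A_{0})\setminus L$ need not lie above $L\alpha^{-1}$). Your construction would need to be rebuilt along these lines.
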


\begin{proof}
Let $R=\Co(W)$ and $\mathscr{Z}=\{Z_{p}\mid p\in P\}^{*}$. Choose
$B\in R$ such that $Z_{L}\subseteq B$, so that $L\subseteq T(B)$,
where $T\colon R\rightarrow2^{P}$ is the type function with respect
to $\mathscr{Z}$. Use TP5 to write $B$ as a finite disjoint union
of $q$-trim sets as $q$ runs through $T(B)_{\min}$. We can now
discard any $q$-trim sets $A$ where $q\notin L$, as then $L\cap T(A)=\emptyset$,
to obtain $B$ as a union of $q$-trim sets for certain $q\in L$,
so that $T(B)=L_{\uparrow}$.

Suppose first that $\mathscr{Z}$ can be regularly refined to a trim
$Q$-partition $\{Y_{q}\mid q\in Q\}^{*}$ of $W$. Then $Y_{L\alpha^{-1}}\subseteq Z_{L}\subseteq B$,
so by Lemma~\ref{Lemma:bounded FF} $L\alpha^{-1}$ has a finite
foundation. Condition~\ref{enu:cond3} is immediate by counting. 

Suppose now that conditions~\ref{enu:cond2}~and~\ref{enu:cond3}
hold. We consider first the case when $W$ is compact. If $p\in P_{\min}^{d}$
then $Q(p)\subseteq Q_{\min}^{d}$, so we can define a function $g\colon Q(p)\rightarrow\mathbb{N}_{+}$
such that $\sum_{q\in Q(p)}g(q)=|Z_{p}|$, and extend this to a function
$g\colon Q_{\min}^{d}\rightarrow\mathbb{N}_{+}$, letting $g(q)=1$
if $q\alpha\notin P_{\min}^{d}$.

By~\cite[Theorem~5.5]{Apps-Stone}, noting that $L=P$ and so $Q$
has a finite foundation as $Q=L\alpha^{-1}$, there is a compact $\omega$-Stone
space $Y$ with a trim $Q$-partition $\mathscr{Y}=\{Y_{q}\mid q\in Q\}^{*}$
such that $|Y_{q}|=g(q)$ for $q\in Q_{\min}^{d}$. Applying the construction
of Proposition~\ref{Prop:trim image under morphism}, we obtain a
trim $P$-partition $\mathscr{U}=\{U_{r}\mid r\in P\}^{*}$ of $Y$,
with $|U_{p}|=|Z_{p}|$ if $p\in P^{d}$ (as the final step of the
proof of Proposition~\ref{Prop:trim image under morphism} will not
expand any finite sets), such that $\mathscr{Y}$ regularly refines
$\mathscr{U}$. By~\cite[Theorem~6.1]{Apps-Stone} there is a $P$-homeomorphism
$\beta$ from $Y$ to $W$, and so $\{Y_{q}\beta\mid q\in Q\}^{*}$
is a trim $Q$-partition of $W$ that regularly refines $\mathscr{Z}$,
as required.

We turn now to the case of a general $W$. 

\textbf{Step 1}: 

Let $M$ be a finite foundation for $L\alpha^{-1}$. Now $M_{\uparrow}$
has a finite foundation, $\alpha$ restricts to a surjective morphism
from $M_{\uparrow}$ to $L_{\uparrow}$, and if $Z_{p}$ is finite
then $p\in L$ and $|Q(p)|\leqslant|Z_{p}|$. Applying the compact
case (taking $(W,P,Q)=(B,L_{\uparrow},M_{\uparrow})$), we obtain
a trim $M_{\uparrow}$-partition of $B$ that regularly refines $\mathscr{Z}|_{B}$,
writing $\mathscr{Z}|_{D}$ for the restriction of the partition $\mathscr{Z}$
to a subset $D\subseteq W$.

\textbf{Step 2}: Using Lemma~\ref{Prop: direct sum}\ref{enu:sum1}
and TP5, split $W-B$ into a finite or countable disjoint union of
trim sets. Grouping the partition into trim sets of the same type,
we can write $R=(B)\oplus\left\{ \bigoplus_{p\in U}\bigoplus_{n\leqslant N_{p}}(C_{p,n})\right\} $,
where $U=P-L$, $C_{p,n}$ is $p$-trim and $0\leqslant N_{p}\leqslant\infty$
$(p\in U)$. 

We can assume that $N_{r}=\infty$ if $r\in U$. For let $k\colon P\rightarrow\mathbb{N}$
be an injective ``labelling'' of $P$. We recall that if $s>p$
($s,p\in P$), $A$ is $s$-trim and $B$ is $p$-trim then $[B]\cong[B]\times[A]$.
Hence we can express each $C_{p,n}$ as a finite union of $s$-trim
sets as $s$ ranges through the finite set $\{s\in P\mid s\geqslant p,k(s)\leqslant k(p)+n\}\subseteq U$,
and regroup into trim sets of the same type. As $\overline{Z_{r}}$
is not compact for $r\in U$, there are infinitely many of the original
$C_{p,n}$ such that $Z_{r}\cap C_{p,n}\neq\emptyset$, i.e.\ such
that $r\geqslant p$, so either there are infinitely many such $p$,
or for some $p$ there are infinitely many such $n$. Hence there
are infinitely many of the revised $C_{p,n}$ such that $p=r$, so
that $N_{r}=\infty$.

\textbf{Step 3}: for each $p\in U$, let $h_{p}\colon\mathbb{N}\rightarrow Q(p)$
be a surjection such that $\{n\in\mathbb{N}\mid h_{p}(n)=q\}$ is
infinite for all $q\in Q(p)$. Now for $q\in Q(p)$, $\alpha$ restricts
to a surjective morphism from $q_{\uparrow}$ to $p_{\uparrow}$,
and if $p\in P^{d}$ then $Q(p)\cap q_{\uparrow}=\{q\}$. We can therefore
apply the compact case (taking $(W,P,Q)=(C_{p,n},p_{\uparrow},h_{p}(n){}_{\uparrow})$),
to obtain a trim $h_{p}(n){}_{\uparrow}$-partition of $C_{p,n}$
that regularly refines $\mathscr{Z}|_{C_{p,n}}$, for each $n$.

\textbf{Step 4}: we claim that $Q=M_{\uparrow}\cup\left\{ \bigcup\{h_{p}(n){}_{\uparrow}\mid p\in U,n\geqslant1\}\right\} $.
Choose $q\in Q$; if $q\alpha\in L$, then $q\in L\alpha^{-1}\subseteq M_{\uparrow}$.
If instead $q\alpha\in U$, then we can find infinitely many $n\in\mathbb{N}$
such that $q=h_{q\alpha}(n)$.

\textbf{Step 5}: finally, we can apply Lemma~\ref{Prop: direct sum}\ref{enu:sum2}
to obtain a trim $Q$-partition of $W$ which refines $\mathscr{Z}$,
and the refinement is regular since its restriction to $B$ and each
$C_{p,n}$ is regular.
\end{proof}
\begin{remark}
Letting $\{V_{q}\mid q\in Q\}^{*}$ denote the trim $Q$-partition
constructed above, we see that $\overline{V_{q}}$ is compact iff
$q\alpha\in L$, and so the constructed $Q$-partition is bounded.
However, a different choice for the functions $h_{p}$ might yield
some relatively compact elements $V_{q}$ for $q\in U$, whose union
was not relatively compact.

If $\mathscr{Z}$ is not bounded, condition~\ref{enu:cond3} clearly
remains necessary. The following weaker condition than~\ref{enu:cond2}
is also necessary, but it is not obvious whether or not this condition
is sufficient:

(c) if $M$ is a lower subset of $P$ such that $\overline{Z_{M}}$
is compact, then $M\alpha^{-1}$ has a finite foundation. 
\end{remark}

\subsection{Some classification results}

The existing literature (e.g.\ \cite[Theorem~3.8.3]{PierceMonk})
establishes a bijection between isomorphism classes of primitive PI
Boolean algebras and those of countable simple diagrams (PO systems
with a maximum element). Theorem~\ref{Thm:equiv simple}, together
with the various uniqueness theorems in~\cite{Apps-Stone}, allow
us to generalise this as follows, using the notation of Definition~\ref{def:extended POS}.
We write $\boldsymbol{EPS}$ for the set of isomorphism classes of
countable extended PO systems $(P,L,f)$ such that $P$ is simple,
and $[E]$ for the isomorphism class of a structure $E$.
\begin{theorem}
The map $\theta\colon W\mapsto[(\mathscr{S}(W),L_{W},f_{W})]$ yields
a surjection from the class of primitive $\omega$-Stone spaces to
$\boldsymbol{EPS}$, and induces the following bijections:
\begin{enumerate}
\item \label{enu:bij1}between the homeomorphism classes of compact primitive
$\omega$-Stone spaces and isomorphism classes $[(P,f)]$, where $P$
is a countable simple PO system with a finite foundation and $f\colon P_{\min}^{d}\rightarrow\mathbb{N}_{+}$;
\item \label{enu:bij2}between the homeomorphism classes of finitary $\omega$-Stone
spaces (primitive spaces $W$ with $\mathscr{S}(W)$ finite) and classes
$[(P,L,f)]$, where $P$ is a finite simple PO system, $L$ is a lower
subset of $P$ and $f\colon L_{\min}^{d}\rightarrow\mathbb{N}_{+}$;
\item \label{enu:bij3}between the homeomorphism classes of bounded $\omega$-Stone
spaces and isomorphism classes $[(P,L,f)]$, where $P$ is a countable
simple PO system, $L$ is a lower subset of $P$ having a finite foundation,
and $f\colon L_{\min}^{d}\rightarrow\mathbb{N}_{+}$.
\end{enumerate}
\end{theorem}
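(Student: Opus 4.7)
The plan is to verify well-definedness of $\theta$ and image containment for each case, and then treat surjectivity and injectivity separately. Well-definedness is immediate: for primitive $W$, Theorem~\ref{Thm:trim partitions} gives that $\mathscr{S}(W)$ is simple, TP4 gives countability, and the remark after Definition~\ref{def:extended POS} identifies $L_W$ as a lower subset of $\mathscr{S}(W)_\Delta$. For~(a), compactness of $W$ forces $L_W = \mathscr{S}(W)$ and TP5 applied to $W$ itself exhibits the finite foundation $T(W)_{\min}$ for $\mathscr{S}(W)$; for~(c), Lemma~\ref{Lemma:bounded FF} gives $L_W$ a finite foundation; and~(b) is the special case of~(c) in which $\mathscr{S}(W)$ is finite.

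For surjectivity of~(a), given a simple countable $(P, f)$ with $P$ finite-foundationed, \cite[Theorem~5.5]{Apps-Stone} produces a compact primitive $\omega$-Stone space $W$ with a trim $P$-partition realising $f$, and Theorem~\ref{Thm:equiv simple} identifies this partition with $\mathscr{X}(W)$, giving $\theta(W) = [(P, f)]$. For surjectivity of~(c), given $(P, L, f)$ with $L$ finite-foundationed, note that $L_\uparrow$ is simple (any congruence on an upper subset of a simple PO system extends trivially to the whole system), shares a finite foundation with $L$, and satisfies $(L_\uparrow)_{\min} = L_{\min}$; apply~(a) to $(L_\uparrow, f)$ to build a compact piece $B$, and for each $p \in P - L$ apply~(a) to $(p_\uparrow, g_p)$ (with $g_p$ the constant function $1$ on $\{p\} \cap P^d$) to construct countably many compact $p$-trim pieces $C_{p, n}$, assembling these with $B$ via Lemma~\ref{Prop: direct sum} to form $W$. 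Lowerness of $L$ ensures no $C_{p, n}$ contributes to $X_q$ for $q \in L$, so $X_L \subseteq B$ is relatively compact and $L_W = L$; the countably many $C_{p, n}$ force $\overline{X_p}$ non-compact for $p \in P - L$; and Theorem~\ref{Thm:equiv simple} gives $\mathscr{S}(W) \cong P$. Case~(b) is the special case of~(c) for finite $P$. Injectivity of~(a) is \cite[Theorem~6.1]{Apps-Stone}, and injectivity of~(b),~(c) follows from the uniqueness of bounded $\omega$-Stone spaces up to homeomorphism alluded to in the introduction and proved in~\cite{Apps-Stone}.

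For the surjectivity of $\theta$ itself onto $\boldsymbol{EPS}$, the key observation is that $L \subseteq P_\Delta$ guarantees each $l \in L$ has only finitely many minimal elements below it, all lying in $L$ by lowerness; hence $L_{\min}$ is non-empty whenever $L$ is, and $L \subseteq \bigcup_{m \in L_{\min}} m_\uparrow$. The plan is to replace the single compact $B$ in the~(c) construction by a countable family of compact pieces $B_m$ for $m \in L_{\min}$, each built via~(a) applied to $(m_\uparrow, f|_{(m_\uparrow)_{\min}^d})$, together with the family $C_{p, n}$ for $p \in P - L$ as before. For $q \in L$, only the finitely many $B_m$ with $m \in L_{\min} \cap q_\downarrow$ contribute to $X_q$, keeping $\overline{X_q}$ compact; for $q \in P - L$ the countably many $C_{q, n}$ force non-compactness. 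The main obstacle is the bookkeeping in this general step: confirming that the $L_{\min}$-many compact pieces cover the $L$-portion of $P$ correctly, that $f_W(q) = f(q)$ for $q \in L_{\min}^d$ (only $B_q$ contributes a $q$-point since $m \leqslant q$ and $m, q \in L_{\min}$ forces $m = q$), and that Lemma~\ref{Prop: direct sum} assembles these into a trim $P$-partition whose structure is exactly $(P, L, f)$.
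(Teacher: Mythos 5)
Your proposal is correct, and its skeleton matches the paper's: well-definedness via Theorem~\ref{Thm:trim partitions}, identification of any trim $(P,L,f)$-partition with $\mathscr{X}(W)$ via Theorem~\ref{Thm:equiv simple} when $P$ is simple, and injectivity via the uniqueness theorem~\cite[Theorem~6.1]{Apps-Stone}. Where you genuinely diverge is in the surjectivity arguments. The paper invokes the existence theorem~\cite[Theorem~5.5]{Apps-Stone} once, with the option $L^{u}=L$, to realise an arbitrary $[(P,L,f)]\in\boldsymbol{EPS}$ in a single stroke; you instead use that theorem only for the compact case~(a) and then bootstrap cases~(b),~(c) and the general case by assembling countable direct sums of compact pieces $B_{m}$ and $C_{p,n}$ via Lemma~\ref{Prop: direct sum}, in the style of Steps~1--5 of the proof of Theorem~\ref{Thm:refinement condition}. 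Your bookkeeping does go through: since $L$ is a lower subset of $P_{\Delta}$ and $P_{\min}\subseteq P_{\Delta}$, one gets $L_{\min}\subseteq P_{\min}$, so for $q\in L$ the set $L_{\min}\cap q_{\downarrow}$ is finite and $\overline{Y_{q}}$ lands in a finite union of the compact $B_{m}$, while lowerness of $L$ prevents any $C_{p,n}$ from meeting $Y_{q}$; the countably many $C_{q,n}$ force non-compactness for $q\in P-L$; and the upper sets $m_{\uparrow}$, $p_{\uparrow}$ are simple with finite foundations, covering $P$. What your route buys is self-containedness within the present paper's toolkit (plus only the compact existence result), at the cost of redoing work the cited theorem already packages; what the paper's route buys is brevity. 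Your other small departures are harmless: deducing the finite foundation of $L_{W}$ in~(c) from Lemma~\ref{Lemma:bounded FF} rather than from the first step of Theorem~\ref{Thm:refinement condition} together with~\cite[Proposition~2.16(i)]{Apps-Stone} (the paper itself uses Lemma~\ref{Lemma:bounded FF} for exactly this purpose in the following theorem), and deriving~(b) as the finite-$P$ restriction of~(c) rather than citing~\cite[Corollary~6.2]{Apps-Stone} (valid, since finitary spaces are automatically bounded and finite $L$ automatically has a finite foundation).
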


\begin{proof}
By Theorem~\ref{Thm:trim partitions}, $\theta(W)\in\boldsymbol{EPS}$
for primitive $W$. Also, if $[(P,L,f)]\in\boldsymbol{EPS}$, then
by~\cite[Theorem~5.5]{Apps-Stone} (taking $L^{u}=L$) we can find
an $\omega$-Stone space $W$ that admits a trim $(P,L,f)$-partition
$\mathscr{Y}$; but $P$ is simple, so applying Theorem~\ref{Thm:equiv simple}
we see that $\mathscr{Y}$ is just a relabelling of $\mathscr{X}(W)$,
and so $[(P,L,f)]=\theta(W)$ and $\theta$ is surjective.

Statement~\ref{enu:bij1} is immediate from~\cite[Theorem~6.1]{Apps-Stone},
taking $L=P$, and statement~\ref{enu:bij2} from~\cite[Corollary~6.2]{Apps-Stone}.

For~\ref{enu:bij3}, we observe that if $W$ is a bounded $\omega$-Stone
space, we can find a compact open subset $B$ of $W$ such that $T_{\mathscr{X}(W)}(B)=(L_{W}){}_{\uparrow}$,
by using the first step in the proof of Theorem~\ref{Thm:refinement condition},
and so by~\cite[Proposition~2.16(i)]{Apps-Stone} $L_{W}$ has a
finite foundation. If now $W$ and $X$ are bounded $\omega$-Stone
spaces such that $\theta(W)=\theta(X)=[(P,L,f)]$, say, where $L$
has a finite foundation, then $W$ and $X$ each admit a bounded trim
$(P,L,f)$-partition, and so are themselves homeomorphic by~\cite[Theorem~6.1]{Apps-Stone},
as required.
\end{proof}
\begin{remark}
Statement~\ref{enu:bij1} is also implicit in Hansoul~\cite{Hansoul2},
and in the exposition in~\cite[section~3]{PierceMonk} which views
isomorphism classes of primitive Boolean algebras as a monoid.

Statement~\ref{enu:bij3} breaks down however once we allow unbounded
$\omega$-Stone spaces, as the following example shows.
\end{remark}

\begin{example}
\label{exa:Iso P, non homeo space}Let $P=\{a_{n},b_{n},c\mid n\geqslant0\}$
with $P^{d}=\{a_{n},c\mid n\geqslant0\}$, and with the order relation
generated by the following for $n\geqslant0$:

\[
\begin{aligned}a_{n}>a_{n+1} &  &  &  &  & a_{n}>b_{n}>c.\end{aligned}
\]

It can be shown that $P$ is simple, with $P=P_{\Delta}$. Let $L=\{b_{n},c\mid n\geqslant0\}$,
which has a finite foundation $\{c\}$, and let $f(c)=1$. Applying~\cite[Theorem~5.5]{Apps-Stone}
with $Q=L$, taking $L^{u}=\emptyset$ and $L^{u}=L$ in turn, we
obtain $\omega$-Stone spaces $X$ and $Y$ with $(P,L,f)$-partitions
$\{X_{p}\mid p\in P\}^{*}$ and $\{Y_{p}\mid p\in P\}^{*}$ respectively
such that $\overline{X_{L}}$ is compact but $\overline{Y_{L}}$ is
not. Relabelling if necessary so that $P=\mathscr{S}(X)=\mathscr{S}(Y)$,
we have $L=L_{W}=L_{X}$, and so $X$ and $Y$ are not homeomorphic.
\end{example}

\begin{question}
For a primitive $\omega$-Stone space $W$, we have seen that information
on which $\overline{X_{p}}$ are compact for $p\in P=\mathscr{S}(W)$
may not be sufficient to determine the homeomorphism class of $W$.
Suppose instead that we know which subsets $Q$ of $P$ give rise
to compact sets $\overline{X_{Q}}$. That is, we consider $[P,\mathscr{L},f]$-partitions,
where $\mathscr{L}$ is a lower subset of $2^{P}$ such that $\overline{X_{Q}}$
is compact precisely when $Q\in\mathscr{L}$, and $f\colon\mathscr{L}_{\min}^{d}\rightarrow\mathbb{N}_{+}$,
where $\mathscr{L}_{\min}^{d}=\{p\in P_{\min}^{d}\mid\{p\}\in\mathscr{L}\}$.
Are two $\omega$-Stone spaces admitting a trim $[P,\mathscr{L},f]$-partition
necessarily homeomorphic?
\end{question}

\subsection{The class of bounded trim partitions of a fixed Boolean space}

The preceding results lead to a natural quasi-ordering on equivalence
classes of bounded trim partitions of a fixed space, which we will
see is isomorphic to a set of isomorphism classes of extended PO systems.
\begin{definition}
\label{def:equiv classes}For an $\omega$-Stone space $W$, let $\mathscr{T}(W)$
denote the set of equivalence classes of bounded trim partitions of
$W$, with two trim partitions deemed equivalent if they are the same
up to homeomorphism and relabelling. For $[\mathscr{Y}],[\mathscr{Z}]\in\mathscr{T}(W)$,
write $[\mathscr{Y}]\prec[\mathscr{Z}]$ if some (any) element of
$[\mathscr{Y}]$ regularly refines some element of $[\mathscr{Z}]$.

Let $\mathscr{P}(W)$ denote the set of isomorphism classes of extended
PO systems $(P,L,f)$ for which there is a bounded trim $(P,L,f)$-partition
of $W$. 

For $[(P,L,f)],[(Q,M,g)]\in\mathscr{P}(W)$, write $[(Q,M,g)]\prec[(P,L,f)]$
if there is a surjective morphism $\alpha\colon Q\rightarrow P$ such
that $L=\{p\in P\mid Q(p)\subseteq M\}$ and $f(p)=\sum_{q\in Q(p)}g(q)$
for $p\in L_{\min}^{d}$, where $Q(p)=\{p\}\alpha^{-1}$, noting that
if this is true for some choice of $Q$ and $P$ within their isomorphism
classes then it is true for all such choices. 
\end{definition}

\begin{remark}
For fixed $W$, let $\mathscr{C}(W)$ denote the set of isomorphism
classes of countable PO systems $Q$ such that $s(Q)\cong\mathscr{S}(W)$.
By Corollary~\ref{Cor:morphism to canonical} and Theorem~\ref{Thm:refinement condition},
$[(Q,M,g)]\in\mathscr{P}(W)$ for some choice of $M$ and $g$ iff
$[Q]\in\mathscr{C}(W)$, $L_{W}\alpha_{Q}^{-1}$ has a finite foundation
and $|\{p\}\alpha_{Q}^{-1}|\leqslant f_{W}(p)$ for all $p\in(L_{W})_{\min}^{d}$,
where $\alpha_{Q}$ denotes the unique surjective morphism from $Q$
to $\mathscr{S}(W)$.
\end{remark}

\begin{theorem}
Let $W$ be a bounded $\omega$-Stone space. Then $(\mathscr{T}(W),\prec)$
and $(\mathscr{P}(W),\prec)$ are isomorphic to each other as quasi-ordered
systems, and have maximal elements $[\mathscr{X}(W)]$ and $[(\mathscr{S}(W),L_{W},f_{W})]$
respectively.
\end{theorem}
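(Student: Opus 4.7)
The plan is to define $\Phi\colon\mathscr{T}(W)\to\mathscr{P}(W)$ by sending the class of a bounded trim $Q$-partition $\mathscr{Y}=\{Y_q\mid q\in Q\}^{*}$ to $[(Q,M_{\mathscr{Y}},g_{\mathscr{Y}})]$, where $M_{\mathscr{Y}}=\{q\in Q\mid\overline{Y_q}\text{ is compact}\}$ and $g_{\mathscr{Y}}(q)=|Y_q|$ for $q\in(M_{\mathscr{Y}})_{\min}^{d}$, and to show $\Phi$ is an isomorphism of quasi-ordered systems. Well-definedness is immediate, since relabellings and partition-preserving homeomorphisms preserve the PO structure, the compactness of each $\overline{Y_q}$, and the cardinalities of the finite fibres. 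Surjectivity is built into the definition of $\mathscr{P}(W)$.

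For monotonicity, if $\mathscr{Y}\prec_\alpha\mathscr{Z}$ then Proposition~\ref{Prop:regrefprops}\ref{enu:refine1-1} shows $\alpha$ is a surjective morphism, and by the uniqueness in Proposition~\ref{Prop:trim image under morphism} we have $\mathscr{Z}=\mathscr{Y}\alpha$; Proposition~\ref{Prop:morphism and PLf} then rewrites the extended data of $\mathscr{Z}$ in exactly the form demanded by $\prec$ on $\mathscr{P}(W)$. Conversely, if $\Phi[\mathscr{Y}]=[(Q,M,g)]\prec[(P,L,f)]=\Phi[\mathscr{Z}]$ via a surjective morphism $\alpha\colon Q\to P$, Proposition~\ref{Prop:trim image under morphism} produces a trim $P$-partition $\mathscr{Y}\alpha$ of $W$ with $\mathscr{Y}\prec_\alpha\mathscr{Y}\alpha$, and Proposition~\ref{Prop:morphism and PLf} identifies its extended PO system data as precisely $(P,L,f)$; hence $\Phi[\mathscr{Y}\alpha]=\Phi[\mathscr{Z}]$, and injectivity of $\Phi$ then forces $[\mathscr{Y}\alpha]=[\mathscr{Z}]$, whence $[\mathscr{Y}]\prec[\mathscr{Z}]$.

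The main obstacle is injectivity of $\Phi$. Given bounded trim partitions $\mathscr{Y}$ and $\mathscr{Z}$ with $\Phi[\mathscr{Y}]=\Phi[\mathscr{Z}]$, relabelling $\mathscr{Y}$ via the isomorphism of extended PO systems turns it into a second bounded trim $(P,L,f)$-partition of $W$ for the same $(P,L,f)$ underlying $\mathscr{Z}$; the uniqueness half of \cite[Theorem~6.1]{Apps-Stone}---the same ingredient used in the classification theorem established just above---then supplies a $P$-homeomorphism of $W$ to itself identifying the two partitions, so $[\mathscr{Y}]=[\mathscr{Z}]$. For the maximality claim, Corollary~\ref{Cor:morphism to canonical} shows that every trim $Q$-partition of $W$ regularly refines $\mathscr{X}(W)$, which is itself a bounded trim $(\mathscr{S}(W),L_W,f_W)$-partition precisely because $W$ is assumed bounded; thus $[\mathscr{X}(W)]$ dominates all of $\mathscr{T}(W)$, and $\Phi$ carries it to the correspondingly maximal $[(\mathscr{S}(W),L_W,f_W)]$ in $\mathscr{P}(W)$.
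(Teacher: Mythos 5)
Your proposal is correct and follows essentially the same route as the paper's proof: the same map $[\mathscr{Y}]\mapsto[(Q,M,g)]$, the same use of Propositions~\ref{Prop:trim image under morphism} and~\ref{Prop:morphism and PLf} for monotonicity in both directions, the same appeal to the uniqueness statement of \cite[Theorem~6.1]{Apps-Stone} for injectivity, and the same use of Corollary~\ref{Cor:morphism to canonical} for maximality. The only detail the paper makes explicit that you elide is that boundedness of the partition guarantees, via Lemma~\ref{Lemma:bounded FF}, that $M$ has a finite foundation --- the hypothesis needed to invoke that uniqueness theorem --- and the paper also records up front that $\prec$ is transitive on both sides, so that the two structures really are quasi-ordered systems.
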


\begin{proof}
It follows from Proposition~\ref{Prop:regrefprops}\ref{enu:refine1-4}
that $\prec$ is transitive on $\mathscr{T}(W)$. Easy checks show
that the composition of two morphisms is a morphism, and that $\prec$
is transitive on $\mathscr{P}(W)$. 

Define a map $\theta\colon\mathscr{T}(W)\rightarrow\mathscr{P}(W)$
by mapping $[\mathscr{Y}]$ to $[(Q,M,g)]$ if $\mathscr{Y}$ is a
bounded trim $(Q,M,g)$-partition of $W$; this is well-defined as
homeomorphisms of partitions preserve $M$ and $g$, and relabelling
corresponds to a PO system isomorphism.

The map $\theta$ is clearly surjective. Conversely, if $\mathscr{Y}=\{Y_{q}\mid q\in Q\}^{*}$
is a bounded $(Q,M,g)$-partition of $W$, then $M$ has a finite
foundation by Lemma~\ref{Lemma:bounded FF}. Hence by~\cite[Theorem~6.1]{Apps-Stone},
all $(Q,M,g)$-partitions of $W$ are homeomorphic to each other.
Therefore $\theta$ is a bijection, as PO system isomorphisms correspond
to a relabelling of the partition. We must show that $\theta$ is
order-preserving.

If $\mathscr{Y}\prec_{\alpha}\mathscr{Z}$, then $\mathscr{Z}=\mathscr{Y}\alpha$
and $[\mathscr{Y}]\theta\prec[\mathscr{Z}]\theta$ by Proposition~\ref{Prop:morphism and PLf}. 

Conversely, suppose that $\mathscr{Y}$ is a bounded trim $(Q,M,g)$-partition
of $W$, and that $[(Q,M,g)]\prec[(P,L,f)]$ via the surjective morphism
$\alpha\colon Q\rightarrow P$. By Propositions~\ref{Prop:trim image under morphism}
and~\ref{Prop:morphism and PLf} $\mathscr{\mathscr{Y}\prec_{\alpha}Y}\alpha$
and $\mathscr{Y}\alpha$ is a bounded trim $(P,L,f)$-partition of
$W$. So if $[\mathscr{Y}]\theta\prec[\mathscr{Z}]\theta=[(P,L,f)]$
then $[\mathscr{Z}]\theta=[\mathscr{Y}\alpha]\theta$ and $[\mathscr{Y}]\prec[\mathscr{Y}\alpha]=[\mathscr{Z}]$,
as $\theta$ is injective. Therefore $\theta$ is an isomorphism of
quasi-ordered systems, as required.

Finally, the maximality of $[\mathscr{X}(W)]$ and $[(\mathscr{S}(W),L_{W},f_{W})]$
follows from Proposition~\ref{Cor:morphism to canonical}.
\end{proof}
\begin{remark}
If $W$ is a \emph{finitary }$\omega$-Stone space (i.e.\ where $\mathscr{S}(W)$
is finite) and we restrict $(\mathscr{T}(W),\prec)$ and $(\mathscr{P}(W),\prec)$
to partitions where the underlying PO systems are finite, then $\prec$
will be antisymmetric and $(\mathscr{T}(W),\prec)$ and $(\mathscr{P}(W),\prec)$
will be isomorphic as posets.

However, these quasi-orders are not in general antisymmetric:
\end{remark}

\begin{example}
Let $P_{j}$ ($j=1,2$) be the set of sequences $\{(m_{0},m_{1},m_{2},\ldots)\}$
such that $m_{n}\in\mathbb{N}$ for all $n$, $m_{0}\leqslant j$
and only finitely many $m_{n}>0$, with the pointwise order relation
and such that $p<p$ for all $p\in P_{j}$. Define $\alpha_{j}\colon P_{j}\rightarrow P_{3-j}$
by setting $(m_{0},m_{1},m_{2},\ldots)\alpha_{j}=(\min(m_{1},3-j),m_{2},m_{3},\ldots)$.
It can be readily verified that $\alpha_{j}$ ($j=1,2$) are surjective
morphisms. 

However, $P_{1}$ and $P_{2}$ are not isomorphic. To see this, let
$p=(1,0,0,\ldots)\in P_{1}$, which covers $\boldsymbol{0}$, the
minimum element of $P_{1}$. Let $Q=\{q\in P_{1}\mid p\nless q\}$;
then $Q=\{(0,m_{1},m_{2},\ldots)\}$, and so every element of $P_{1}-Q$
covers an element of $Q$. However, there is no element $p\in P_{2}$
covering $\boldsymbol{0}$ for which this is true. 

Finally, as $P_{1}$ and $P_{2}$ both have a finite foundation and
no discrete elements, we can find a (bounded) trim $P_{1}$-partition
$\mathscr{X}$ and a trim $P_{2}$-partition $\mathscr{Y}$ of the
Cantor set, with $[\mathscr{X}]\prec[\mathscr{Y}]$ and $[\mathscr{Y}]\prec[\mathscr{X}]$,
but $[\mathscr{X}]\neq[\mathscr{Y}]$; equivalently, $[(P_{1},P_{1},f)]\prec[(P_{2},P_{2},f)]$
and vice versa, where $f$ is the empty function, but $[(P_{1},P_{1},f)]\neq[(P_{2},P_{2},f)]$. 
\end{example}

\section{\label{sec:Complete-partitions}Complete partitions of primitive
spaces}

In this section we clarify the relationship between trim partitions,
which are not complete unless the underlying PO system satisfies the
ascending chain condition, and the (complete) \emph{rank diagrams
}introduced by Myers~\cite{Myers}. We define \emph{rank partitions},
as a generalisation of rank diagrams; define the \emph{ideal completion}
of a trim $P$-partition, whose underlying PO system is the ideal
completion of $P$; and show that rank partitions correspond precisely
to ideal completions of trim partitions.

\subsection{Rank diagrams and rank partitions}

First, we need some more definitions.
\begin{definition}[\emph{Separated and compact elements of a PO system}]

Let $P$ be a PO system. An element $p\in P$ is said to be \emph{separated
in $P$ }if it is not the supremum of a strictly increasing sequence
in $P$, and $p$ is \emph{compact in $P$} if for every ideal $J$
of $P$ such that $p\leqslant\sup_{P}J$, we have $p\in J$. For countable
$P$, $p$ is separated in $P$ if for every ideal $J$ of $P$ such
that $p=\sup_{P}J$, we have $p\in J$.

If $Q$ is another PO system such that $P\subseteq Q$, we say that
$P$ is \emph{weakly separated in~$Q$} if $p=\sup_{Q}J$ for some
$p\in P$ and ideal $J$ of $P$ implies that $p\in J$ (equivalently,
for countable $P$, if no $p\in P$ is the supremum in $Q$ of a strictly
increasing sequence in~$P$), and that $Q$ is \emph{$\omega$-complete
over $P$ }if every ascending sequence of elements of $P$ has a supremum
in $Q$.
\end{definition}

\begin{remark}
It is easy to see that:
\begin{enumerate}
\item If $p$ is compact in $P$ then $p$ is separated in $P$;
\item if $P$ is countable, $P\subseteq Q$ and all elements of $P$ are
separated in $Q$, then $P$ is weakly separated in~$Q$;
\item if $P$ is weakly separated in $P$ (e.g.\ if $P$ is countable and
all elements of $P$ are separated in $P$) and $P\subseteq Q$, then
$P$ is weakly separated in~$Q$, as if $p=\sup_{Q}J$ for some $p\in P$
and ideal $J$ of $P$ then $p=\sup_{P}J$.
\end{enumerate}
\end{remark}

\begin{example}
Let $P=\{r\}$, $Q=\mathbb{N}\cup\{r\}$, and $S=Q\cup\{\omega\}$
where $j<r$ in $Q$ and $j<\omega<r$ in $S$ for all $j\in\mathbb{N}$.
Then $r$ is separated in $P$ and $S$, but not in $Q$. We note
that $S=\id(Q)$, as defined below.
\end{example}

\begin{definition}[\emph{Semi-trim partitions and clean interior}]
Let $W$ be a Stone space, $P$ a PO system and $\mathscr{X}=\{X_{p}\mid p\in P\}^{*}$
a $P$-partition of $W$. We say that $\mathscr{X}$ is a \emph{semi-trim
}$P$-partition of $W$ if it satisfies trim axioms T1 and T2, with
T3 replaced by:
\begin{description}
\item [{ST3\label{ST3}}] Clean points are dense in $X_{p}$ for $p\in\widehat{P}$
\end{description}
and that $\mathscr{X}$ is \emph{a strongly semi-trim }$P$-partition
of $W$ if it satisfies T1 and T2, with T3 replaced by 
\begin{description}
\item [{SST3\label{SST3}}] All points in $X_{p}$ are clean for $p\in\widehat{P}$.
\end{description}
The \emph{clean interior }of a semi-trim partition $\mathscr{X}$,
denoted $\mathscr{X}^{o}$, is the partition $\{Z_{q}\mid q\in\widehat{P}\}^{*}$,
where $Z_{q}=\{w\in X_{q}\mid w\text{ is a clean point\ensuremath{\}}}$
for $q\in\widehat{P}$. By~\cite[Proposition~4.10]{Apps-Stone},
this is a trim $\widehat{P}$-partition of $W$ and gives rise to
the same trim sets as $\mathscr{X}$.
\end{definition}

\begin{remark}
\label{rem:limit points}If $\{X_{p}\mid p\in P\}^{*}$ is a semi-trim
$P$-partition of the $\omega$-Stone space $W$ then:
\begin{description}
\item [{TP7\label{TP7}}] If $w\in X_{p}$ and $V\subseteq V_{w}$ is a
neighbourhood base of $w$ then $p=\sup_{P}I_{w}=\sup_{P}\{t(A)\mid A\in V\}$
(\cite[Proposition~2.13]{Apps-Stone});
\item [{TP8\label{TP8}}] If $\widehat{P}$ is countable, elements of $P-\widehat{P}$
are never separated in $P$, as if $w\in X_{p}$ is a limit point
then $p=\sup_{P}I_{w}$ and $I_{w}$ is not a principal ideal;
\item [{TP9\label{TP9}}] $W$ is primitive (from Theorem~\ref{Primitive=00003Dtrim},
as the clean interior of $\{X_{p}\}^{*}$ is a trim $\widehat{P}$-partition
of~$W$).
\end{description}
\end{remark}

The following example illustrates the differences between these three
types of partitions.
\begin{example}
\label{exa:Partition types}Let $P=\{r,p_{1},p_{2},\ldots,q_{1},q_{2},\ldots\}$,
where $\{p_{n}\}$ and $\{q_{n}\}$ are increasing sequences with
$r=\sup\{p_{n}\}$. Let $Q_{1}=P\cup\{s,t\}$, where $p_{n}<s<r$
and $q_{n}<t$ for all $n$; let $Q_{2}=P\cup\{s\}$, where $p_{n}<s<r$
and $q_{n}<s$ for all $n$; and let $Q_{3}=P\cup\{t\}$, where $q_{n}<t$
for all $n$. Each $Q_{j}$ inherits the order relations of $P$,
and we set $P^{d}=Q_{j}^{d}=\emptyset$.

By~\cite[Theorem~5.8]{Apps-Stone} there is an $\omega$-Stone space
$W$ with a trim $P$-partition $\mathscr{X}=\{X_{p}\mid p\in P\}^{*}$;
let $U$ and $V$ denote the limit points corresponding to the ideals
$\{p_{n}\}$ and $\{q_{n}\}$ of $P$ respectively.

Define complete $Q_{j}$-partitions $\mathscr{Y}_{j}=\{Y_{ju}\mid u\in Q_{j}\}$
of $W$ for $j\leqslant3$ as follows. Let $Y_{ju}=X_{u}$ for $u\in P$,
except that $Y_{3r}=X_{r}\cup U$; let $Y_{1s}=U$, $Y_{1t}=V$, $Y_{2s}=U\cup V$
and $Y_{3t}=V$. The clean interior of each $\mathscr{Y}_{j}$ is
just~$\mathscr{X}$, with $\widehat{Q_{j}}=P$.

$\mathscr{Y}_{1}$ and $\mathscr{Y}_{2}$ are strongly semi-trim as
all points in $Y_{ju}$ are clean for $u\in P$ and $j=1,2$. Of these,
$\mathscr{Y}_{1}$ is the more natural completion of $\mathscr{X}$;
indeed, $Q_{1}$ is the ``ideal completion'' of $P$ and $\mathscr{Y}_{1}$
is the ``ideal completion'' of $\mathscr{X}$ (see Section~\ref{subsec:The-ideal-completion}
below), whereas $\mathscr{Y}_{2}$ artificially combines $U$ and
$V$ into a single partition element. $\mathscr{Y}_{3}$ is semi-trim
but not strongly semi-trim, as $Y_{3r}$ contains both clean and limit
points; $Q_{3}$ is in fact the ``chain completion'' of $P$ and
$\mathscr{Y}_{3}$ is the ``chain completion'' of $\mathscr{X}$,
as defined in~\cite[Section~4.2]{Apps-Stone}.
\end{example}

\begin{definition}[\emph{Rank diagrams}]
Let $W$ be a compact $\omega$-Stone space, and $(P,\prec)$ a PO
system with a largest element. We recall from~Myers~\cite[Definition~9.1]{Myers}
that $P$ is a \emph{rank diagram} for $W$, and say that \emph{$P$
ranks $W$ via $f$}, if there is a surjective function $f\colon W\rightarrow P$
such that for all $p,q\in P$ and $x\in W$,
\begin{description}
\item [{(R1)\label{R1}}] if $p\succ q$ and $f(x)=p$, then we can find
$\{x_{k}\mid k\geqslant1\}$ such that $x_{k}\neq x$ (all $k$),
$f(x_{k})=q$ and $x=\lim x_{k}$;
\item [{(R2)\label{R2}}] if $x=\lim x_{k}$, $x_{k}\neq x$ (all $k$),
and $f(x_{k})=q$ then $f(x)\succ q$;
\item [{(R3)\label{R3}}] if $q$ is separated, $x=\lim x_{k}$, $f(x)\preccurlyeq q$
and $\{f(x_{k})\mid k\geqslant1\}$ are either pairwise strictly incomparable
or strictly decreasing, then $f(x_{k})\preccurlyeq q$ for almost
all $k$;
\item [{(R4)\label{R4}}] if $f(x)$ is not separated, then we can write
$x=\lim x_{k}$, with $f(x_{k})\succneqq f(x)$, all $k$;
\item [{(R5)\label{R5}}] if $P_{\max}=\{q\}$ and $q\nprec q$, then there
is a unique $w\in W$ such that $f(w)=q$.
\end{description}
\begin{definition}[Orbit diagram]
Let $W$ be an $\omega$-Stone space (not necessarily compact). For
$w\in W$, let $[w]$ denote the orbit of $w$ under homeomorphisms
of $W$, and let $\mathscr{O}(W)=\{[w]\mid w\in W\}$, \emph{the orbit
diagram }of $W$ (see~\cite[section 15]{Myers}), which becomes a
PO system with relation $[w]<[x]$ iff some point in $[w]$ is a limit
of distinct points in $[x]$. 
\end{definition}

\end{definition}

\begin{theorem}[Rank diagrams]
 (Myers~\cite[Theorems~15.3~and~9.7]{Myers})
\begin{enumerate}
\item A compact $\omega$-Stone space has a rank diagram iff it is primitive
and PI\@;
\item a compact PI primitive space $W$ is ranked by $\widetilde{\mathscr{O}(W)}$
(via the map sending an element to its orbit);
\item compact $\omega$-Stone spaces with a common rank diagram are homeomorphic.
\end{enumerate}
\end{theorem}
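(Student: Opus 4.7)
The plan is to reformulate a rank diagram as a certain complete strongly semi-trim partition of $W$, and then derive (a)--(c) from the trim partition machinery of Sections~\ref{sec:definitions}--\ref{sec:Trim partitions}. Given a rank diagram $(P,\prec,f)$ for $W$, I would set $X_{p}=f^{-1}(p)$ and view $\mathscr{X}=\{X_{p}\mid p\in P\}$ as a complete partition of $W$ indexed by $\widetilde{P}$ (reversing Myers' order to align with our convention; cf.~Remark~\ref{rem: order reversal}). A careful reading of R1--R5 should show that R1 and R2 together are equivalent to the $\widetilde{P}$-partition condition $X_{p}^{\prime}=\bigcup_{q<p}X_{q}$; that R3 and R4 together encode axiom~T1 (existence of a neighbourhood base of trim sets at every point) and pin down which partition elements consist of clean points versus limit points, via Myers' separatedness condition; and that R5 enforces the singleton cardinality $|X_{q^{\ast}}|=1$ at the unique minimal discrete element $q^{\ast}\in\widetilde{P}$ corresponding to Myers' maximum element of $P$. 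Axiom~T2 is then forced by the surjectivity of $f$ together with R3.

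For~(a), the ``only if'' direction now follows at once: $W$ admits a strongly semi-trim $\widetilde{P}$-partition, hence is primitive by~TP9; and the minimum $q^{\ast}$ of $\widetilde{P}$, together with R5, makes $W$ itself $q^{\ast}$-trim in the clean interior, so $W$ is PI by~TP6. The ``if'' direction is subsumed in~(b): given $W$ compact PI primitive, I take $P=\widetilde{\mathscr{O}(W)}$ and $f\colon w\mapsto [w]$. Conditions R1 and R2 then hold directly from the definition of the order on $\mathscr{O}(W)$, and R5 is just the PI hypothesis applied to the unique singleton orbit at the top of~$\mathscr{O}(W)$. The substantive content of~(b) reduces to a \emph{homogeneity} claim: each orbit $[w]$ lies inside a single partition element $X_{s}$ of the canonical trim partition $\mathscr{X}(W)$, and any two clean points of $\mathscr{X}(W)$ sharing a type $s\in\mathscr{S}(W)$ lie in a common orbit. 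Granting this, R3 follows from~T1 for $\mathscr{X}(W)$ combined with the match between orbits and canonical types on the clean part, while R4 is just~TP2 read through the orbit/partition correspondence on the limit part.

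For~(c), two spaces $W_{1},W_{2}$ sharing a rank diagram $P$ each carry a strongly semi-trim $\widetilde{P}$-partition by the dictionary of the first paragraph, so their clean interiors are trim $\widehat{\widetilde{P}}$-partitions with the same underlying PO system and the same cardinalities at discrete elements. Corollary~\ref{Cor:morphism to canonical} then gives $\mathscr{S}(W_{1})\cong\mathscr{S}(W_{2})\cong s(\widehat{\widetilde{P}})$, and~\cite[Theorem~6.1]{Apps-Stone} (or equivalently Theorem~\ref{Hanf}(a)) yields a homeomorphism $W_{1}\cong W_{2}$.

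The main obstacle is the orbit-homogeneity claim underpinning~(b). Showing that any two clean points $w_{1},w_{2}\in X_{s}$ lie in a common orbit requires a global back-and-forth construction: enumerate a countable base of compact opens of $W$ and, at each stage, match up $s$-trim neighbourhoods via~TP6 to extend a partial homeomorphism one step further, ultimately piecing these partial maps together into a self-homeomorphism of $W$ sending $w_{1}$ to $w_{2}$. This step relies essentially on compactness of $W$ and draws on the full uniqueness content of~\cite[Theorem~6.1]{Apps-Stone}; everything else in the proof is bookkeeping around the dictionary.
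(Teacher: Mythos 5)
This theorem is quoted from Myers rather than proved in the paper, but your route --- converting a rank diagram into a complete strongly semi-trim $\widetilde{P}$-partition and then reading off (a)--(c) from the trim-partition machinery, with the orbit diagram supplying the ranking in (b) and the uniqueness theorem of~\cite{Apps-Stone} supplying (c) --- is essentially the route the paper itself takes when it generalizes the result in Section~\ref{sec:Complete-partitions} (Theorem~\ref{Propn rank diagram}, Theorem~\ref{Theorem: ideal completion properties} and Corollary~\ref{Thm: orbit diagram}), and the outline is sound. The one step you underestimate is the assertion that a ``careful reading'' shows R3 and R4 encode T1: manufacturing a trim neighbourhood base from the rank axioms is the substantive content of Myers'~\cite[Lemma~9.2(a)--(c)]{Myers}, which the paper imports wholesale in proving Theorem~\ref{Propn rank diagram}, whereas the orbit-homogeneity obstacle you do flag is exactly what the paper discharges by citing \cite[Corollary~3.6(ii)]{Apps-Stone}.
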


We will adapt Myers' definition by dropping requirement R5, and the
needs for $P$ to have a largest element and for the space to be compact,
and (for the reasons in Remark~\ref{rem: order reversal}) reversing
the order relation. Hence the following definition means that $\mathscr{X}$
is a rank partition of \emph{$W$} iff the label function $\tau$
satisfies properties R1 to R4 above, after applying order reversal,
as conditions R1 and R2 are equivalent to $\{X_{p}\mid p\in P\}$
being a $\widetilde{P}$-partition (i.e.\ $p\succ q$ iff $X_{p}\subseteq X_{q}^{\prime}$),
where $X_{p}=\{x\in W\mid f(x)=p\}$.
\begin{definition}[\emph{Rank partitions}]

Let $W$ be an $\omega$-Stone space, $P$ a PO system and $\mathscr{X}$
a complete $P$-partition of $W$ with label function $\tau$. Then
$\mathscr{X}$ is a \emph{rank $P$-partition of $W$} if also:
\begin{description}
\item [{(RP1)\label{RP1}}] if $q$ is separated, $x=\lim x_{k}$, $\tau(x)\geqslant q$
and $\{\tau(x_{k})\mid k\geqslant1\}$ are either pairwise strictly
incomparable or strictly increasing, then $\tau(x_{k})\geqslant q$
for almost all $k$;
\item [{(RP2)\label{RP2}}] if $\tau(x)$ is not separated, then we can
write $x=\lim x_{k}$, with $\tau(x_{k})\lneqq\tau(x)$ for all $k$.
\end{description}
\end{definition}

\begin{example}
For the partitions considered in Example~\ref{exa:Partition types},
$\mathscr{Y}_{2}$ fails condition~RP1, for we can take $q=p_{1}$,
$x\in V$ and $x_{k}\in X_{q_{k}}$, as $\tau(x)=s\geqslant p_{1}$,
but $\tau(x_{k})=q_{k}\ngeqslant p_{1}$. Partition $\mathscr{Y}_{3}$
fails condition~RP2, taking $x\in X_{r}$, as $r$ is not separated
in $Q_{3}$, but if $x=\lim x_{k}$ then $\tau(x_{k})=\tau(x)=r$
for sufficiently large $k$. Partition $\mathscr{Y}_{1}$ however
is a rank $Q_{1}$-partition of~$W$.
\end{example}

Our first result in this section shows how rank partitions relate
to strongly semi-trim partitions. 
\begin{theorem}
\label{Propn rank diagram}

Let $W$ be an $\omega$-Stone space, $P$ a PO system and $\mathscr{X}$
a complete $P$-partition of $W$. Then $\mathscr{X}$ is a rank $P$-partition
of $W$ iff $\mathscr{X}$ is strongly semi-trim and all elements
of $\widehat{P}$ are compact in $P$.
\end{theorem}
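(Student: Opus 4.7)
The plan is to prove the two directions separately, with most of the delicate work in $(\Rightarrow)$.

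For $(\Leftarrow)$, assume $\mathscr{X}$ is strongly semi-trim with every $p \in \hat{P}$ compact in $P$. For RP1, any separated $q$ lies in $\hat{P}$ by the contrapositive of TP8, and is therefore compact. Given $x = \lim x_k$ with $\tau(x) \geq q$, the set $I_x$ is an ideal of $\hat{P}$ with $\sup_P I_x = \tau(x) \geq q$ by TP7 and TP1, so compactness of $q$ forces $q \in I_x$, giving $x$ a $q$-trim neighbourhood $A$; eventually $x_k \in A$, so $\tau(x_k) \in T(A) = q_\uparrow$. The argument in fact does not use the antichain/strictly-increasing hypothesis of RP1. For RP2, if $\tau(x) = p$ is not separated then $p \notin \hat{P}$ (since $\hat{P}$ elements are compact hence separated), so $x$ is a limit point by SST3, and $I_x$ is a non-principal ideal of $\hat{P}$ with $\sup_P I_x = p$. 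Being countable it is generated by a strictly increasing $\{q_k\} \subseteq \hat{P}$ with $\sup q_k = p$ and $q_k < p$, and TP3 applied in a decreasing neighbourhood base of $q_k$-trim sets around $x$ produces the required $x_k$ with $\tau(x_k) = q_k$.

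For $(\Rightarrow)$, I would first show $\hat{P}$ equals the set of separated elements of $P$, together with SST3. The central step is: if $p$ is separated and $w \in X_p$, then $w$ has a $p$-trim neighbourhood. Suppose not; take a decreasing compact open neighbourhood base $\{A_n\}$ of $w$, giving points $x_n \in A_n \cap X_{r_n}$ with $r_n \not\geq p$, and pass to a Ramsey-type subsequence in the four standard cases. Constant $r_n = r$ gives $w \in X_r'$ so $p < r$, contradicting $r \not\geq p$. The strictly-increasing and pairwise-incomparable cases both contradict RP1 applied with $q = p$. The strictly-decreasing case uses that $x_n \in X_{r_n} \subseteq X_{r_1}'$ for $n \geq 2$, so each $A_n$, being an open neighbourhood of $x_n$, contains some $y_n \in X_{r_1}$; then $y_n \to w$ with constant type $r_1$ forces $w \in X_{r_1}'$ and the same contradiction. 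If $p \in P^d$, shrink the resulting neighbourhood using the fact that $X_p$ is closed and discrete to obtain $|V \cap X_p| = 1$. Conversely, $p \in \hat{P}$ must be separated: a clean $w \in A \cap X_p$ for a $p$-trim $A$ combined with RP2 in the non-separated case would produce $w = \lim w_k$ with $\tau(w_k) < p$, but eventual $w_k \in A$ forces $\tau(w_k) \in T(A) = p_\uparrow$, contradiction. T2 follows similarly: if $\tau(w) = s > p$ and $w$ has a $p$-trim neighbourhood base, then $w \notin X_p'$ gives a neighbourhood of $w$ disjoint from $X_p$, contradicting $p \in T(A)$ for a $p$-trim $A$ inside it.

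The main obstacles are T1 at limit points and compactness of $\hat{P}$ in $P$. For T1 at a limit point $w \in X_p$ with $p$ not separated and a compact open $U \ni w$, I would use RP2 plus the clean-point result to obtain an approximating sequence $w_k \to w$ of clean points with separated types $q_k < p$ and $\sup q_k = p$, and then show via RP1 at $w$ with each $q_k$ playing the role of $q$ that any extra type in $T(U) - p_\uparrow$ must lie above some $q_k$; this forces $T(U)_{\min}$ to consist of separated elements, so that after shrinking to satisfy the cardinality condition for minimal types in $P^d$, one obtains a trim neighbourhood of $w$ inside $U$. For compactness of $p \in \hat{P}$: suppose $p \leq \sup_P J$ for some ideal $J$ of $P$. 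The principal case yields $p \in J$ directly from the lower-set property. For non-principal $J$ with $s = \sup J$, $s$ is not separated, so $x \in X_s$ is a limit point; RP2 gives $x_k \to x$ with $\tau(x_k) = s_k$ strictly increasing and $s_k < s$, and RP1 at $x$ applied with $q = p$ yields $s_k \geq p$ eventually, while applying RP1 with $q$ ranging over a cofinal family of separated generators of $J$ forces $s_k$ into $J$ eventually, so that $p \leq s_k \leq q_m$ for some $m$ gives $p \in J$. The most delicate point is the step ``$s_k \in J$ eventually'', which I would handle by case analysis on the relative positions of $s_k$ and the generators $q_n$ of $J$ using iterated applications of RP1 and the structural constraints on $P$ imposed by the existence of the rank partition.
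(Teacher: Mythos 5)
Your $(\Leftarrow)$ direction is essentially the paper's argument, and your Ramsey-style four-case analysis showing that a point of separated type $p$ has a $p$-trim neighbourhood is a reasonable self-contained substitute for the paper's appeal to Myers' Lemma~9.2(c); the arguments for $\widehat{P}=\{\text{separated elements}\}$, T2 and SST3 are also sound. But the two steps you yourself flag as the main obstacles are genuinely incomplete. For T1 at a limit point $w\in X_{p}$ with $p$ not separated: RP2 only supplies approximating points of strictly \emph{smaller} type, not of \emph{separated} type, and iterating RP2 to descend to a separated type has no termination guarantee since $P$ is not assumed well-founded; moreover RP1 ``at $w$'' constrains only sequences converging to $w$, so it cannot force anything about $T(U)_{\min}$, which is determined by points of $U$ that may be far from $w$. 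What is actually needed is the existence of a separated $q\in T(U)_{\min}$ with $q\lneqq p$, a witness $x\in U\cap X_{q}$, and a compact open $B$ containing both $x$ and $w$ on which every point other than $x$ has type $>q$; this is exactly the content of Myers' Lemma~9.2(a)--(c), which the paper cites (noting those proofs do not use R5), and your sketch does not reproduce it.

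The compactness argument has a direction error at its crucial step. To conclude $p\in J$ you need the approximating types $s_{k}$ to lie \emph{in} $J$, i.e.\ to be bounded \emph{above} by elements of $J$; but RP1 only ever yields lower bounds ($\tau(x_{k})\geqslant q$), so ``applying RP1 with $q$ ranging over generators of $J$'' pushes the $s_{k}$ upwards and cannot place them in the lower set $J$ --- note that $\tau(x_{k})\lneqq\sup_{P}J$ does not imply $\tau(x_{k})\in J$. The paper avoids this by building the sequence inside $J$ from the outset: for an ideal $J$ of $\widehat{P}$ it uses TP3 (applied to the clean interior, available once T1--SST3 are established) to find $w$ with $I_{w}=J$, and takes $x_{k}\in A_{k}\cap X_{t(A_{k})}$ for a trim neighbourhood base $\{A_{k}\}$ of $w$ with $t(A_{k})$ strictly increasing, so that $\tau(x_{k})=t(A_{k})\in J$ automatically; RP1 then plays only its legitimate lower-bound role, giving $p\leqslant\tau(x_{k})\in J$. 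Finally, a general ideal $J$ of $P$ (which need not be generated by a strictly increasing sequence, since only $\widehat{P}$, not $P$, is known to be countable) must be reduced to this case; the paper does so via the order-theoretic construction $K=\bigcup\{L\mid L\text{ an ideal of }\widehat{P},\ \sup_{P}L\in J\}$, showing $K$ is an ideal of $\widehat{P}$ with $K\subseteq J$ and $\sup_{P}K=\sup_{P}J$. Your proposal lacks both of these ingredients, so the forward direction is not established.
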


\begin{proof}
(i) Suppose $\mathscr{X}=\{X_{p}\mid p\in P\}$ is a rank $P$-partition
of $W$, and let $R=\Co(W)$. We need to show that properties T1,
T2 and SST3 hold and that all elements of $\widehat{P}$ are compact
in $P$. 

Suppose $A\in R$ and $y\in X_{p}\cap A$. We claim that we can find
$q\in P$ and a $q$-trim $C\in R$ such that $y\in C\subseteq A$,
with $q=p$ if $p$ is separated, so that T1 holds.

If $p$ is not separated, apply~\cite[Lemma~9.2(a),(b)]{Myers} in
turn within the compact set~$A$ (noting that the proofs of~\cite[Lemma~9.2(a),(b)]{Myers}
do not use property~R5) to find $q\in T(A)_{\min}$ and $x\in A\cap X_{q}$
such that $p\gneqq q$, where $q$ is separated. If $p$ is separated,
let $q=p$ and $x=y$.

Use~\cite[Lemma~9.2(c)]{Myers} (taking $x^{\prime}=y$ if $y\neq x)$
to find $B\in R$ such that $x,y\in B$ with $\tau(z)>q$ for all
$z\in B$ with $z\neq x$. Hence $T(A\cap B)\subseteq\{r\in P\mid r\geqslant q\}$.
But $T(A\cap B)\supseteq\{r\in P\mid r\geqslant q\}$ as $x\in A\cap B$
and $\mathscr{X}$ is a $P$-partition. Also, if $q\nless q$ then
$B\cap X_{q}=\{x\}$. Hence $C=A\cap B$ is the required $q$-trim
neighbourhood of $y$. 

Moreover, if $p$ is separated then $p\in\widehat{P}$ and $y$ is
a clean point. Conversely if $A$ is $p$-trim, $x\in A\cap X_{p}$,
and $x=\lim x_{k}$ then $x_{k}\in A$ and $\tau(x_{k})\geqslant p$
for large enough $k$, and so $p$ is separated by RP2. Hence $p$
is separated iff $p\in\widehat{P}$, and SST3 follows as all points
in $X_{p}$ are clean. If $x\in X_{q}$ has a neighbourhood base of
$p$-trim sets for some $p\in\widehat{P}$, then $q\geqslant p$ and
$x\in\overline{X_{p}}$, so $q=p$; hence $\mathscr{X}$ is full,
T2 holds and $\mathscr{X}$ is strongly semi-trim. Moreover, $\widehat{P}$
is countable (TP4).

Suppose next that $p\in\widehat{P}$ and $J$ is an ideal of $\widehat{P}$
such that $p\leqslant\sup_{P}J$. If $J$ is principal, then clearly
$p\in J$. Otherwise find $w\in W$ such that $I_{w}=J$ and $\tau(w)=\sup_{P}J$
(TP3 and TP7), and let $A_{1}\supseteq A_{2}\supseteq\cdots$ be
a neighbourhood base of $w$ of trim sets such that $\{t(A_{k})\mid k\geqslant1\}$
is strictly increasing (choosing a sub-sequence of $\{A_{k}\}$ if
necessary). Choose $x_{k}\in A_{k}\cap X_{t(A_{k})}$ so that $w=\lim x_{k}$
and $\tau(x_{k})=t(A_{k})\in J$. Now $p$ is separated, so by RP1
$p\leqslant\tau(x_{k})$ for almost all $k$, and so $p\in J$.

Suppose finally that $p\in\widehat{P}$ and $J$ is an ideal of $P$
such that $p\leqslant\sup_{P}J$. Let $K=\bigcup\{L\mid L\text{ is an ideal of }\widehat{P}\text{ and }\sup_{P}L\in J\}$.
Clearly $K\subseteq J$ and $K$ is a lower subset of $\widehat{P}$.
If $L_{1}$ and $L_{2}$ are ideals of $\widehat{P}$ such that $\sup_{P}L_{i}\in J$
($i=1,2$), choose $q\in J$ such that $\sup_{P}L_{i}\leqslant q$
($i=1,2$) and find $w\in X_{q}$ so that $q=\sup_{P}I_{w}$ (TP7).
For each $r\in L_{i}$, $r\leqslant q=\sup_{P}I_{w}$, so that $r\in I_{w}$
by the previous paragraph. Hence $L_{i}\subseteq I_{w}\subseteq K$
($i=1,2$) and $K$ is an ideal of $\widehat{P}$. Moreover, for each
$q\in J$ we have $q=\sup_{P}I_{w}$ for $w\in X_{q}$, with $I_{w}\subseteq K$,
and so $\sup_{P}J=\sup_{P}K$. Hence $p\in K\subseteq J$, and $p$
is compact in $P$. 

(ii) Now suppose that $\mathscr{X}$ is a complete strongly semi-trim
$P$-partition of $W$ and that all elements of $\widehat{P}$ are
compact (and hence separated) in $P$. $\widehat{P}$ is countable
by~TP4. We must show that RP1 and RP2 hold. 

For RP1, suppose $q$ is separated, $x=\lim x_{k}$, and $\tau(x)=\sup_{P}I_{x}\geqslant q$.
By~TP8, $q\in\widehat{P}$, so $q$ is compact and $q\in I_{x}$.
Choose $A\in V_{x}$ such that $t(A)=q$; then $\tau(x_{k})\geqslant q$
for all sufficiently large $k$ (such that $x_{k}\in A$).

For RP2, let $A_{1}\supseteq A_{2}\supseteq\cdots$ be a neighbourhood
base of $x$ of trim sets, so that $\tau(x)=\sup\{t(A_{k})\mid k\geqslant1\}$,
with $\{t(A_{k})\}$ increasing. Choose $x_{k}\in A_{k}$ such that
$\tau(x_{k})=t(A_{k})\in\widehat{P}$; we have $x=\lim x_{k}$. If
$\tau(x)$ is not separated in $P$, then $\tau(x)\notin\widehat{P}$
(as elements of $\widehat{P}$ are compact and so separated in $P$),
and hence $\tau(x_{k})\lneqq\tau(x)$ for all $k$, as required.
\end{proof}
\begin{remark}
These ideas lead to an alternative definition of trim partitions.
Let $W$ be an $\omega$-Stone space, $P$ a PO system, and $\mathscr{X}=\{X_{p}\mid p\in P\}^{*}$
a $P$-partition of $W$ with label function $\tau\colon X_{P}\rightarrow P$.
Similar arguments to those in the previous proof show that $\mathscr{X}$
is a trim $P$-partition of $W$ if and only if it satisfies the following
two conditions:
\begin{enumerate}
\item if $x\in X_{P}$ and $x=\lim x_{k}$, where $x_{k}\in X_{P}-\{x\}$,
then $\tau(x_{k})>\tau(x)$ for almost all $k$;
\item if $x\notin X_{P}$, we can write $x=\lim x_{k}$, where $x_{k}\in X_{P}$
and $\{\tau(x_{k})\}$ are strictly increasing, and such that if $x=\lim y_{n}$
where $y_{n}\in X_{P}$, then for each $k$ we have $\tau(x_{k})\leqslant\tau(y_{n})$
for almost all $n$.
\end{enumerate}
\end{remark}

\begin{proposition}
\label{Prop: completeness condns}Let $P$ be a PO system and $\mathscr{X}$
a complete semi-trim $P$-partition of the $\omega$-Stone space $W$.
\begin{enumerate}
\item \label{enu:complete1}$\mathscr{X}$ is a strongly semi-trim partition
iff $\widehat{P}$ is weakly separated in $P$;
\item \label{enu:complete2}$\mathscr{X}$ is a rank partition of $W$ iff
every element of $\widehat{P}$ is compact in $P$.
\end{enumerate}
\end{proposition}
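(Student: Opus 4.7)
The plan is to prove (a) from scratch using the properties TP1, TP3 and TP7 together with the dictionary between cleanness of a point and principality of its trim-type ideal, and then to deduce (b) as a short corollary by combining (a) with Theorem~\ref{Propn rank diagram}.

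For (a), I would first note three preliminaries: a point $w \in X_P$ is clean iff $I_w$ is principal; by TP7, $\sup_P I_w = \tau(w)$; and, since the clean interior $\mathscr{X}^o$ is a trim $\widehat{P}$-partition with the same trim sets as $\mathscr{X}$, TP1 applied to $\mathscr{X}^o$ shows $I_w$ is always an ideal of $\widehat{P}$. The forward direction then goes: assuming $\mathscr{X}$ is strongly semi-trim, given an ideal $J$ of $\widehat{P}$ with $\sup_P J = p \in \widehat{P}$, pick any trim $B$ with $t(B) \in J$ and apply TP3 to $\mathscr{X}^o$ to obtain $w \in B$ with $I_w = J$; TP7 places $w$ in $X_p$, so SST3 forces $w$ to be clean, whence $J$ is principal and contains $p$. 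Conversely, if $\widehat{P}$ is weakly separated in $P$ and some $w \in X_p$ (with $p \in \widehat{P}$) were a limit point, then $I_w$ would be a non-principal ideal of $\widehat{P}$ with $\sup_P I_w = p$; weak separation would force $p \in I_w$, making $p$ a maximum of $I_w$ and $I_w$ therefore principal, a contradiction.

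For (b), Theorem~\ref{Propn rank diagram} already characterises rank partitions as those that are strongly semi-trim with every element of $\widehat{P}$ compact in $P$. So the only content to add is that compactness of every $p \in \widehat{P}$ in $P$ already implies strong semi-trimness of $\mathscr{X}$: compact implies separated in $P$, $\widehat{P}$ is countable by TP4, and then the remark following the definition of weakly separated gives that $\widehat{P}$ is weakly separated in $P$, at which point (a) supplies strong semi-trimness and Theorem~\ref{Propn rank diagram} closes the loop. The converse is immediate from Theorem~\ref{Propn rank diagram}.

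The main anticipated obstacle is the forward direction of (a): one must realise an arbitrary ideal $J$ of $\widehat{P}$ with $\sup_P J \in \widehat{P}$ as $I_w$ for a specific $w$, and the right tool is TP3 applied to the trim clean interior $\mathscr{X}^o$. Once that construction is in place, the remainder of (a) and all of (b) are routine bookkeeping between the notions of principal ideal, separated/compact element, and clean point.
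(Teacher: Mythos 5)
Your proposal is correct and follows essentially the same route as the paper: part (a) via TP3/TP7 and the equivalence between cleanness and principality of $I_{w}$, and part (b) by reducing to Theorem~\ref{Propn rank diagram} after showing that compactness of every element of $\widehat{P}$ forces $\widehat{P}$ to be weakly separated in $P$. The only (immaterial) difference is in that last step: the paper applies compactness directly to the ideal $K=J_{\downarrow}$ of $P$, whereas you go through ``compact $\Rightarrow$ separated'' plus the countability remark; both are valid.
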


\begin{proof}
\ref{enu:complete1} If $\mathscr{X}$ is strongly semi-trim, $p\in\widehat{P}$
and $p=\sup_{P}J$ for some ideal $J$ of $\widehat{P}$, then we
can find $w\in W$ such that $I_{w}=J$ (TP3), so that $w\in X_{p}$.
Hence $w$ is a clean point and $p\in J$ (TP2). Conversely, if $\widehat{P}$
is weakly separated in $P$ and $w\in X_{p}$ for $p\in\widehat{P}$,
then $p=\sup_{P}I_{w}$, so $p\in I_{w}$ and $w$ is a clean point. 

\ref{enu:complete2} \textquotedbl Only if\textquotedbl{} is immediate
from Theorem~\ref{Propn rank diagram}. \textquotedbl If\textquotedbl{}
follows from~\ref{enu:complete1} and Theorem~\ref{Propn rank diagram},
as the conditions mean that $\widehat{P}$ is weakly separated in~$P$:
for if $p\in\widehat{P}$ and $p=\sup_{P}J$ for an ideal $J$ of
$\widehat{P}$, then $p=\sup_{P}K$, where $K=\{q\in P\mid q\leqslant r\text{, some }r\in J\}$;
but $K$ is an ideal of $P$, so $p\in K$ by compactness and $p\in J$.
\end{proof}
\begin{remark}
The previous two results effectively establish a hierarchy for complete
partitions: whether a semi-trim $P$-partition is strongly semi-trim
or a rank partition depends only on how $\widehat{P}$ is placed within
$P$.
\end{remark}

\subsection{\label{subsec:Regular-extensions}Regular extensions of a semi-trim
partition}

In what follows, we will be considering complete partitions that extend
a trim partition, for which the following definition will be useful:
\begin{definition}
\label{def: regular extn}If $P$ and $Q$ are PO systems with $Q\subseteq P$,
and $\mathscr{X}=\{X_{p}\mid p\in P\}^{*}$ and $\mathscr{Y}=\{Y_{q}\mid q\in Q\}^{*}$
are a $P$-partition and semi-trim $Q$-partition respectively of
the $\omega$-Stone space~$W$, we say that $\mathscr{X}$ is a \emph{regular
extension }of $\mathscr{Y}$ (per~\cite[Definition~4.4]{Apps-Stone})
if:
\begin{enumerate}
\item $Y_{q}\subseteq X_{q}$ for each $q\in Q$; 
\item for all $A\in\Co(W)$ and $p\in T_{\mathscr{X}}(A)$, we can find
$q\in T_{\mathscr{Y}}(A)$ such that $q\leqslant p$.
\end{enumerate}
\end{definition}

The above regularity definition is the same as that already encountered:
\begin{lemma}[Regular extensions]
\label{Lemma:regext}

Let $P$ and $Q$ be PO systems with $Q\subseteq P$, and let $\mathscr{X}=\{X_{p}\mid p\in P\}^{*}$
and $\mathscr{Y}=\{Y_{q}\mid q\in Q\}^{*}$ be a $P$-partition and
semi-trim $Q$-partition respectively of the Stone space~$W$ such
that $\mathscr{Y}$ refines $\mathscr{X}$ via $\alpha$, where $\alpha\colon Q\rightarrow P$
is the inclusion map. Then the following are equivalent:
\begin{enumerate}
\item \label{enu:regext1}$\mathscr{X}$ is a regular extension of $\mathscr{Y}$;
\item \label{enu:regext4}$\mathscr{X}$ is a semi-trim $P$-partition,
$\widehat{P}=\widehat{Q}$, $P^{d}=Q^{d}$ and $\Trim_{p}(\mathscr{Y})=\Trim_{p}(\mathscr{X})$
for all $p\in\widehat{P}$;
\item \label{enu:regext5}$\mathscr{X}$ is a semi-trim $P$-partition and
$\mathscr{Y}^{o}=\mathscr{X}^{o}$;
\item \label{enu:regext2}$\mathscr{Y}\prec_{\alpha}\mathscr{X}$.
\end{enumerate}
\end{lemma}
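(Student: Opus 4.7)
The plan is to establish the cycle $(1) \Rightarrow (4) \Rightarrow (2) \Rightarrow (3) \Rightarrow (1)$, with Proposition~\ref{Prop:regref} as the principal tool. The crucial simplification throughout is that, since $\alpha$ is the inclusion $Q \hookrightarrow P$, the image $T_{\mathscr{Y}}(A)\alpha$ coincides with $T_{\mathscr{Y}}(A)$ viewed inside~$P$, and $s\alpha = s$ for $s \in Q$.

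For $(1) \Rightarrow (4)$, the real work is establishing T1 for $\mathscr{X}$, so that the relation $\prec_{\alpha}$ is even defined. Given $w \in W$ and a compact open neighbourhood $A$, T1 for $\mathscr{Y}$ produces $B \in \Trim_{q}(\mathscr{Y})$ with $w \in B \subseteq A$; the regularity condition then forces $T_{\mathscr{X}}(B) \subseteq q_{\uparrow}$ in $P$, while the $P$-partition identity $X_{r}^{\prime} \cap X = \bigcup_{s<r} X_{s}$ applied at a point of $B \cap Y_{q} \subseteq B \cap X_{q}$ supplies the reverse containment, so $B$ is $q$-trim in $\mathscr{X}$ (the discrete count $|B \cap X_{q}| = 1$ for $q \in P^{d}$ being inherited from $|B \cap Y_{q}| = 1$ together with the regularity condition ruling out extra $X_{q}$-points). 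Once T1 holds, the regularity clause of (4) is immediate: for $A \in \Trim_{p}(\mathscr{X})$ with $p \in \widehat{P}$, the element $q \leqslant p$ supplied by (1) lies in $T_{\mathscr{Y}}(A) \subseteq T_{\mathscr{X}}(A) = p_{\uparrow}$, forcing $q = p$. Conversely, $(4) \Rightarrow (1)$ is easy: $Y_{q} \subseteq X_{q}$ is part of the refinement hypothesis, and for compact open $A$ with $p \in T_{\mathscr{X}}(A)$, T1 for $\mathscr{X}$ yields a trim neighbourhood $C \subseteq A$ of a chosen $w \in A \cap X_{p}$ of type $p^{\prime} \leqslant p$, and Proposition~\ref{Prop:regref}\ref{enu:regref2} then delivers $s = p^{\prime}$ and a $\mathscr{Y}$-trim subset of $C$ labelled $s$, placing $s \in T_{\mathscr{Y}}(A)$ with $s \leqslant p$.

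For $(4) \Rightarrow (2)$, Proposition~\ref{Prop:regref}\ref{enu:regref3} gives $\widehat{Q} \subseteq \widehat{P}$ and Proposition~\ref{Prop:regref}\ref{enu:regref2} gives $\widehat{P} \subseteq \widehat{Q}$. The matching $\Trim_{p}(\mathscr{X}) = \Trim_{p}(\mathscr{Y})$ follows by computing $T_{\mathscr{Y}}(A) = p_{\uparrow}$ (in $Q$) from $T_{\mathscr{Y}}(A) \subseteq T_{\mathscr{X}}(A) \cap Q$ together with the $Q$-partition property applied at a point of $A \cap Y_{p}$, with the discrete count transferred via $Y_{p} \subseteq X_{p}$. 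Semi-trimness of $\mathscr{X}$ then follows: T1 is already in hand, T2 transfers from that of $\mathscr{Y}$ via coincidence of trim-set families, and ST3 comes by decomposing any compact open neighbourhood into trim pieces via TP5 applied to the trim partition $\mathscr{Y}^{o}$ (whose trim-set family coincides with that of $\mathscr{X}$). The identification $P^{d} = Q^{d}$ reduces to $P^{d} \subseteq Q$, which is the most delicate ingredient: it needs a direct argument that T1 and fullness of $\mathscr{Y}$, together with the $P$-partition structure of $\mathscr{X}$, rule out discrete elements of $P$ sitting outside~$Q$.

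The remaining implications are lighter. For $(2) \Rightarrow (3)$, the clean interiors $\mathscr{X}^{o}$ and $\mathscr{Y}^{o}$ are both trim $\widehat{P}$-partitions of $W$ sharing identical trim-set families, hence coincide by fullness. For $(3) \Rightarrow (1)$, $Y_{q} \subseteq X_{q}$ is part of the refinement hypothesis, and for compact open $A$ with $p \in T_{\mathscr{X}}(A)$, T1 for the semi-trim $\mathscr{X}$ produces a trim $\mathscr{X}$-neighbourhood $B \subseteq A$ of a chosen $w \in A \cap X_{p}$ of type $p^{\prime} \leqslant p$; by $\mathscr{X}^{o} = \mathscr{Y}^{o}$, $B$ is also $p^{\prime}$-trim in $\mathscr{Y}$, so $p^{\prime} \in T_{\mathscr{Y}}(A)$ with $p^{\prime} \leqslant p$. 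The main obstacle is the bookkeeping surrounding the identity $P^{d} = Q^{d}$, which sits at the junction of the raw $P$-partition structure of $\mathscr{X}$ and the trim/semi-trim structure of $\mathscr{Y}$.
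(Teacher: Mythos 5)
Your route is genuinely different from the paper's: the paper disposes of the implication from regular extension to statement (b) by citing \cite[Lemma~4.9]{Apps-Stone}, and of the implication from $\mathscr{Y}\prec_{\alpha}\mathscr{X}$ back to regular extension by the generic form of the regularity condition, whereas you reprove everything in situ. Most of what you write checks out: the derivation of T1 for $\mathscr{X}$ from the regular-extension condition (including the discrete count), the identification $\widehat{P}=\widehat{Q}$ and $\Trim_{p}(\mathscr{X})=\Trim_{p}(\mathscr{Y})$ via Proposition~\ref{Prop:regref}, and the two short implications through the clean interiors are all correct. But there is one genuine gap, and it is exactly where you flag it: you assert that $P^{d}\subseteq Q$ ``needs a direct argument'' and then never supply one. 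Since $P^{d}=Q^{d}$ is part of statement (b), your cycle is broken at that step, and this is not bookkeeping --- it is the one point in the whole equivalence where a topological fact must actually be proved (the paper buries it inside the citation to \cite[Lemma~4.9]{Apps-Stone}).

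Here is how to close it. Suppose $p\in P-Q$ and $w\in X_{p}$. Then $w\notin Y_{Q}$, so by fullness of $\mathscr{Y}$ and TP2 applied to the trim partition $\mathscr{Y}^{o}$, the ideal $J=I_{w}(\mathscr{Y})$ of $\widehat{Q}$ is non-principal, hence non-empty. Every $\mathscr{Y}$-trim neighbourhood $A$ of $w$ has $t(A)\in J$ and meets $Y_{t(A)}$, so $w\in\overline{Y_{J}}$. For each $r\in J$, an $r$-trim neighbourhood of $w$ in $\mathscr{Y}$ is $r$-trim in $\mathscr{X}$ by Proposition~\ref{Prop:regref}\ref{enu:regref3} and contains $w\in X_{p}$, so $p\geqslant r$; and $p\neq r$ because $r\in Q$ while $p\notin Q$, so $r<p$. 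Hence $Y_{J}\subseteq\bigcup_{s<p}X_{s}=X_{p}^{\prime}\cap X_{P}\subseteq X_{p}^{\prime}$, and since derived sets are closed, $w\in\overline{Y_{J}}\subseteq X_{p}^{\prime}$. Thus $w\in X_{p}\cap X_{p}^{\prime}$, and the $P$-partition identity $X_{p}^{\prime}\cap X_{P}=\bigcup_{s<p}X_{s}$ forces $p<p$. So no element of $P-Q$ is discrete, giving $P^{d}=Q^{d}$. With this inserted, your argument is complete.
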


\begin{proof}
\ref{enu:regext1}~$\Rightarrow$~\ref{enu:regext4} Immediate from~\cite[Lemma~4.9]{Apps-Stone}.

\ref{enu:regext4}~$\Rightarrow$~\ref{enu:regext5} $\mathscr{X}$
and $\mathscr{Y}$ have the same trim sets. Also, if $p\in\widehat{P}$
and $x\in X_{p}$ is clean in $\mathscr{X}$, then by fullness $x\in Y_{p}$
and so $x$ is clean in $\mathscr{Y}$. Therefore $\mathscr{X}$ and
$\mathscr{Y}$ have the same clean points and $\mathscr{Y}^{o}=\mathscr{X}^{o}$.

\ref{enu:regext5}~$\Rightarrow$~\ref{enu:regext2} Suppose $p\in\widehat{P}$
and $A\in\Trim_{p}(\mathscr{X})$. Choose $w\in A\cap X_{p}$. Then
$w$ is clean in $\mathscr{X}$ and $\mathscr{Y}$, and so $w\in A\cap Y_{p}$
and $p\in T_{\mathscr{Y}}(A)$.

\ref{enu:regext2}~$\Rightarrow$~\ref{enu:regext1} Immediate from
the generic regularity definition: namely that $\mathscr{Y}\prec_{\alpha}\mathscr{X}$
provided $T_{\mathscr{X}}(A)\subseteq(T_{\mathscr{Y}}(A)\alpha)_{\uparrow}$
for all compact open $A$.
\end{proof}

\subsection{\label{subsec:The-ideal-completion}The ideal completion of a trim
partition}

Our next definition will lead to an alternative characterisation of
rank partitions.
\begin{definition}[\emph{Ideal completions}]

If $P$ is a poset, the \emph{ideal completion of $P$ }is the poset
$\id(P)$ whose elements are the ideals of $P$, ordered by inclusion.
We identify $P$ with the subset of $\id(P)$ consisting of the principal
ideals of $P$. If $P$ is a PO system, then $\id(P)$ becomes a PO
system with the definition $\id(P)^{d}=\{\{p\}_{\downarrow}\mid p\in P^{d}\}$.

If $P$ is a countable PO system and $\mathscr{X}$ is a trim $P$-partition
of the $\omega$-Stone space~$W$, the \emph{ideal completion of
$\mathscr{X}$ }is the complete partition $\id(\mathscr{X})=\{Y(J)\mid J\in\id(P)\}$
of~$W$, where $Y(J)=\{w\in W\mid I_{w}=J\}$ for an ideal $J$ of
$P$, noting that each $Y(J)$ is non-empty (TP3).
\end{definition}

\begin{remark}
If $\mathscr{X}=\{X_{p}\mid p\in P\}^{*}$, then (with the above notation)
we have $Y(p_{\downarrow})=X_{p}$, as $\mathscr{X}$ is trim. We
use different indexing for the elements of the trim partition and
its ideal completion, so that notationally $X_{J}=\bigcup\{X_{p}\mid p\in J\}$,
whereas $Y(J)=\{w\in W\mid I_{w}=J\}$.

In~\cite[section~4.2]{Apps-Stone} we defined the \emph{chain completion}
of a semi-trim $P$-partition, whose underlying PO system is the chain
completion of $P$. Unlike the ideal completion, the chain completion
of $P$ identifies an ideal of $P$ with its supremum in~$P$, where
this exists. The chain completion can be defined for any semi-trim
partition, whereas the ideal completion is naturally defined only
for trim partitions. However, as we shall see shortly, the ideal completion
provides an explicit way of constructing rank partitions.
\end{remark}

\begin{lemma}
\label{Lemma:compact}If $P$ is a PO system, then elements of $P$
are compact in $\id(P)$.
\end{lemma}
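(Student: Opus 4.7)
The plan is to unpack the definition of compactness and show directly that if $\mathscr{J}$ is an ideal of $\id(P)$ with $p_{\downarrow}\leqslant \sup_{\id(P)}\mathscr{J}$, then $p_{\downarrow}\in\mathscr{J}$. The key observation is that the order on $\id(P)$ is simply inclusion of ideals, so the PO-system structure does not complicate matters: $J_{1}\leqslant J_{2}$ in $\id(P)$ is equivalent to $J_{1}\subseteq J_{2}$.

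First I would identify the supremum explicitly. Since $\mathscr{J}$ is an ideal of $\id(P)$ it is directed under inclusion, so $\bigcup\mathscr{J}$ is itself an ideal of $P$ (lower closure is immediate, and directedness follows by choosing $J_{3}\in\mathscr{J}$ containing given $J_{1},J_{2}\in\mathscr{J}$ and then using that $J_{3}$ itself is directed). Hence $\bigcup\mathscr{J}\in\id(P)$ and is clearly the least upper bound of $\mathscr{J}$ in $\id(P)$, so $\sup_{\id(P)}\mathscr{J}=\bigcup\mathscr{J}$.

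Second, the hypothesis $p_{\downarrow}\leqslant\bigcup\mathscr{J}$ unwinds to $p\in\bigcup\mathscr{J}$, so there exists $J\in\mathscr{J}$ with $p\in J$. Because $J$ is a lower subset of $P$, this forces $p_{\downarrow}\subseteq J$, equivalently $p_{\downarrow}\leqslant J$ in $\id(P)$. Finally, since $\mathscr{J}$ is a lower subset of $\id(P)$, we conclude $p_{\downarrow}\in\mathscr{J}$, as required.

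There is no real obstacle here: the only point requiring a little care is checking that the PO-system conventions on $\id(P)$ (with $\id(P)^{d}=\{p_{\downarrow}\mid p\in P^{d}\}$) do not disturb the equivalence of $\leqslant$ and $\subseteq$, and that the supremum in $\id(P)$ coincides with the set-theoretic union. Both are routine, so the lemma follows in a few lines.
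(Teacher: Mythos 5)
Your proof is correct and follows essentially the same route as the paper's: both identify $\sup_{\id(P)}\mathscr{J}$ as the union $\bigcup\mathscr{J}$ by verifying that this union is an ideal of $P$ (using directedness of $\mathscr{J}$ and of its members), and then conclude from $p\in\bigcup\mathscr{J}$ that $p_{\downarrow}\subseteq J$ for some $J\in\mathscr{J}$, hence $p_{\downarrow}\in\mathscr{J}$ by lower-closure. No gaps.
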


\begin{proof}
Suppose $p\in P$ and $\{p\}_{\downarrow}\subseteq\sup_{\id(P)}K$,
where $K$ is an ideal of $\id(P)$. We claim that $\bigcup_{J\in K}J$
is an ideal of $P$. For suppose $q_{1}\in J_{1}$ and $q_{2}\in J_{2}$,
where $J_{i}\in K$ for $i=1,2$. As $K$ is an ideal of $\id(P)$,
we can find $J_{3}\in K$ such that $J_{i}\subseteq J_{3}$ for $i=1,2$;
and we can then find $q_{3}\in J_{3}$ such that $q_{i}\leqslant q_{3}$
for $i=1,2$. It follows that $\sup_{\id(P)}K=\bigcup_{J\in K}J$.
Hence $p\in J$ for some $J\in K$, $\{p\}_{\downarrow}\subseteq J$
and so $\{p\}_{\downarrow}\in K$, as required.
\end{proof}
We are ready to describe the properties of the ideal completion of
a trim partition.
\begin{theorem}[Ideal completions]
\label{Theorem: ideal completion properties}Let $W$ be a primitive
$\omega$-Stone space, $P$ a PO system and $\mathscr{Y}$ a trim
$P$-partition of $W$. Then $\id(\mathscr{Y})$ is a rank $\id(P)$-partition
of $W$. Moreover:
\begin{enumerate}
\item \label{enu:SST1}$(\id(\mathscr{Y}))^{o}=\mathscr{Y}$;
\item \label{enu:SST2}$\id(\mathscr{Y})$ regularly extends $\mathscr{Y}$;
\item \label{enu:SST3}the elements of $\id(\mathscr{Y})$ are the orbits
of $P$-homeomorphisms of $W$;
\item \label{enu:SST4}if $\mathscr{V}$ is a complete semi-trim $Q$-partition
of $W$ such that $\mathscr{V}^{o}=\mathscr{Y}$ (so that $\widehat{Q}=P$),
then $\id(\mathscr{Y}^{o})\prec_{\beta}\mathscr{V}$, where $J\beta=\sup_{Q}J$
for $J\in\id(P)$;
\item \label{enu:SST5}if $Q$ is a PO system, $\mathscr{V}$ a trim $Q$-partition
of $W$ and $\alpha\colon Q\rightarrow P$ a surjective morphism such
that $\mathscr{V}\prec_{\alpha}\mathscr{Y}$, then there is a map
$\id(\alpha)\colon\id(Q)\rightarrow\id(P)$ such that $\id(\mathscr{V})\prec_{\id(\alpha)}\id(\mathscr{Y})$.
\end{enumerate}
\end{theorem}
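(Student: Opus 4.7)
My plan is to establish the main rank-partition claim first and then handle the five enumerated items, with item~\ref{enu:SST3} as the main obstacle. The key reduction is the correspondence $\Trim_{p_{\downarrow}}(\id(\mathscr{Y})) = \Trim_p(\mathscr{Y})$ for $p \in P$. One direction uses TP3: given $A \in \Trim_p(\mathscr{Y})$, every $w \in A$ has $A$ as a $p$-trim neighbourhood (so $p \in I_w$) and every ideal $J \ni p$ is realised as $I_w$ for some $w \in A$, whence $T_{\id(\mathscr{Y})}(A) = (p_{\downarrow})_{\uparrow}$; the singleton condition for $p \in P^d$ transfers because $Y(p_{\downarrow}) = X_p$ and $\id(P)^d = \{\{p\}_{\downarrow} \mid p \in P^d\}$. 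The other direction uses TP5 to split any $\id(\mathscr{Y})$-trim set into $\mathscr{Y}$-trim pieces and then match types. Given this, the derived-set identity $Y(J)^{\prime} \cap W = \bigcup_{K < J} Y(K)$ reduces to checking that for $J = p_{\downarrow}$ the limit points of $X_p$ are exactly the $w$ with $I_w \subsetneq p_{\downarrow}$, or $I_w = p_{\downarrow}$ and $p \notin P^d$ (the latter capturing self-accumulation of $X_p$ when $p < p$); T1, T2, and SST3 then transfer from the corresponding properties of $\mathscr{Y}$. Since $\widehat{\id(P)} = \{p_{\downarrow} \mid p \in P\}$ consists of elements compact in $\id(P)$ by Lemma~\ref{Lemma:compact}, Theorem~\ref{Propn rank diagram} yields that $\id(\mathscr{Y})$ is a rank $\id(P)$-partition of $W$.

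Items~\ref{enu:SST1} and~\ref{enu:SST2} are immediate. The clean interior of $\id(\mathscr{Y})$ assigns each $p_{\downarrow} \in \widehat{\id(P)}$ the clean points of $Y(p_{\downarrow}) = X_p$, which is all of $X_p$ by T3 of $\mathscr{Y}$; and Lemma~\ref{Lemma:regext} applied to the inclusion $p \mapsto p_{\downarrow}$, via the trim set correspondence, gives the regular extension. For~\ref{enu:SST4}, TP7 forces $\tau_{\mathscr{V}}(w) = \sup_Q I_w = J\beta$ for every $w \in Y(J)$, so $Y(J) \subseteq V_{J\beta}$; here $J\beta$ is well-defined because TP3 realises every $J \in \id(P)$ as some $I_w$ and TP7 then supplies the supremum. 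Regularity follows from Proposition~\ref{Prop:regref}\ref{enu:regref3}, since $\mathscr{V}$ and $\mathscr{Y}$ share their trim sets (from $\mathscr{V}^o = \mathscr{Y}$ and~\cite[Proposition~4.10]{Apps-Stone}), whence $\id(\mathscr{Y})$ and $\mathscr{V}$ do as well under $p_{\downarrow} \mapsto p$. For~\ref{enu:SST5}, I set $J\id(\alpha) = (J\alpha)_{\downarrow}$ and verify, writing $Y_V(K)$ for the parts of $\id(\mathscr{V})$, that for $w \in Y_V(K)$ with $K = I_w^{\mathscr{V}}$ we have $I_w^{\mathscr{Y}} = (K\alpha)_{\downarrow}$: one inclusion uses that every $\mathscr{V}$-trim set of type $q$ is $\mathscr{Y}$-trim of type $q\alpha$ (Proposition~\ref{Prop:regref}\ref{enu:regref3} applied to $\mathscr{V} \prec_\alpha \mathscr{Y}$); the other shrinks any $r$-trim $\mathscr{Y}$-neighbourhood of $w$ to a $\mathscr{V}$-sub-neighbourhood of some type $q' \in K$, giving $r \leq q'\alpha$. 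Regularity of the refinement follows from the same trim set correspondence.

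The main obstacle is item~\ref{enu:SST3}: showing that any two points $w, w' \in W$ with $I_w = I_{w'} = J$ lie in a common orbit of a $P$-homeomorphism of $W$. I pick nested $\mathscr{Y}$-trim neighbourhood bases $A_n \ni w$ and $B_n \ni w'$ sharing a common cofinal sequence of types $s_n \in J$, so that $A_n$ and $B_n$ are each $s_n$-trim, and construct a $P$-homeomorphism $\phi$ of $W$ with $w\phi = w'$ by a back-and-forth argument on compact opens: at each stage I invoke~\cite[Theorem~6.1]{Apps-Stone} to $P$-homeomorph the annular piece $A_n - A_{n+1}$ onto $B_n - B_{n+1}$ (each a compact open carrying a trim $(s_n)_{\uparrow}$-partition with matching extended PO system data), while simultaneously enumerating compact opens outside the $A_n$ and $B_n$ sequences to ensure that the partial maps cohere into a global homeomorphism of $W$. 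The converse direction (that each orbit lies within a single $Y(J)$) is immediate, since $P$-homeomorphisms send trim sets to trim sets of the same type and hence preserve $I_w$.
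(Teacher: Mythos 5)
Your handling of the main claim and of parts~\ref{enu:SST1}, \ref{enu:SST2}, \ref{enu:SST4} and~\ref{enu:SST5} is essentially the paper's argument in a slightly different order: the paper first verifies the $\id(P)$-partition axioms directly, then observes that $\id(\mathscr{Y})$ is a regular extension of $\mathscr{Y}$ so that Lemma~\ref{Lemma:regext} transfers the trim structure and gives $(\id(\mathscr{Y}))^{o}=\mathscr{Y}$, and concludes via Lemma~\ref{Lemma:compact} and Proposition~\ref{Prop: completeness condns}; your front-loaded identity $\Trim_{p_{\downarrow}}(\id(\mathscr{Y}))=\Trim_{p}(\mathscr{Y})$ packages the same information, and your treatments of~\ref{enu:SST4} (via TP7/TP3) and~\ref{enu:SST5} (via $J\id(\alpha)=(J\alpha)_{\downarrow}$) coincide with the paper's.

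The divergence, and the gap, is part~\ref{enu:SST3}. The paper disposes of it by citing~\cite[Corollary~3.6(ii)]{Apps-Stone}; you attempt to re-prove that result, and the annulus-matching step fails as stated. Even though $A_{n}$ and $B_{n}$ are both $s_{n}$-trim, the sets $A_{n}-A_{n+1}$ and $B_{n}-B_{n+1}$ need \emph{not} carry matching extended PO system data: their types are upper subsets of $(s_{n})_{\uparrow}$ that can differ (one annulus may miss a partition element $X_{q}$ entirely while the other meets it), and for $q\in P^{d}$ the finite counts $|(A_{n}-A_{n+1})\cap X_{q}|$ and $|(B_{n}-B_{n+1})\cap X_{q}|$ need not agree; in either case the two annuli are not $P$-homeomorphic and \cite[Theorem~6.1]{Apps-Stone} does not apply to them. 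This is repairable by shuffling trim pieces between $B_{n+1}$ and its complement in $B_{n}$ (using TP5 and the fact that adjoining a set of type contained in $(s_{n+1})_{\uparrow}$ to an $s_{n+1}$-trim set keeps it $s_{n+1}$-trim), but that argument is not in your sketch. A second, unaddressed, difficulty is the global step: $W-A_{1}$ and $W-B_{1}$ need not be $P$-homeomorphic, so the partial maps cannot simply be glued; the phrase ``simultaneously enumerating compact opens outside the $A_{n}$ and $B_{n}$'' is carrying the entire weight of what is in effect a re-run of the back-and-forth proof of \cite[Theorem~6.1]{Apps-Stone} with $w$ and $w^{\prime}$ as distinguished points. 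Either carry that construction out in full or, as the paper does, quote \cite[Corollary~3.6(ii)]{Apps-Stone}.
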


\begin{proof}
Let $\mathscr{Y}=\{Y_{p}\mid p\in P\}^{*}$ and $\mathscr{Z}=\id(\mathscr{Y})=\{Z(J)\mid J\in\id(P)\}$.
We show first that $\mathscr{Z}$ is an $\id(P)$-partition of $W$.
If $J,K\in\id(P)$ with $J<K$, and $w\in Z(J)$, choose any $A\in\Trim(\mathscr{Y})$
such that $w\in A$. Then $t(A)\in J\subseteq K$, so $A\cap Z(K)\neq\emptyset$
(TP3). Moreover if $J=K$ then $(A-\{w\})\cap Z(J)\neq\emptyset$:
for if $J=p_{\uparrow}$ with $p<p$ then $Y_{p}$ has no isolated
points, while if $J$ is non-principal we can find $B\in\Trim_{q}(\mathscr{Y})$
such that $w\in B\subseteq A$ for some $q\gneqq t(A)$ and now $(A-B)\cap Z(J)\neq\emptyset$
as $A-B$ is still $t(A)$-trim. Hence $(A-\{w\})\cap Z(K)\neq\emptyset$
and so $Z(J)\subseteq Z(K)^{\prime}$. Conversely, if $J\nless K$,
find $w\in Z(J)$, and $p\in I_{w}=J$ such that $p\notin K$ if $J\neq K$;
if $J=K=q_{\downarrow}$ with $q\in P^{d}$, let $p=q$. Let $A$
be a $p$-trim neighbourhood of $w$. Then $(A-\{w\})\cap Z(K)=\emptyset$,
so $w\notin Z(K)^{\prime}$. Hence $\mathscr{Z}$ is an $\id(P)$-partition
of $W$. 

Moreover, if $A$ is compact open and $w\in A\cap Y_{p}$ ($p\in P$),
then $I_{w}=p_{\downarrow}$, $w\in Z(I_{w})$, and $I_{w}$ is just
the image of $p$ in $\id(P)$. Hence $\mathscr{Z}$ is a regular
extension of $\mathscr{Y}$. By Lemma~\ref{Lemma:regext} $\mathscr{Z}$
is a semi-trim $\id(P)$-partition of $W$, and gives rise to the
same trim sets as $\mathscr{Y}$, so $\mathscr{Z}^{o}=\mathscr{Y}$.
Hence by Proposition~\ref{Prop: completeness condns} and Lemma~\ref{Lemma:compact},
$\mathscr{Z}$ is a rank $\id(P)$-partition of $W$.

We have established~\ref{enu:SST1} and~\ref{enu:SST2}, and~\ref{enu:SST3}
follows immediately from~\cite[Corollary~3.6(ii)]{Apps-Stone}.

For~\ref{enu:SST4}, let $Q$ be a PO system containing $P$ and
$\mathscr{V}=\{V_{q}\mid q\in Q\}$ a complete semi-trim $Q$-partition
of $W$ such that $\mathscr{V}^{o}=\mathscr{Y}$. If $x,y\in W$ and
$I_{x}=I_{y}$, then $x,y\in V_{q}$ where $q=\sup_{Q}I_{x}$ (TP7).
Hence $\id(\mathscr{Y})$ refines $\mathscr{V}$ via $\beta$, and
the refinement is regular by Lemma~\ref{Lemma:regext} as $\mathscr{V}^{o}=\mathscr{Y}=(\id(\mathscr{Y}))^{o}$.

For~\ref{enu:SST5}, let $\id(\alpha)\colon\id(Q)\rightarrow\id(P)$
be given by $J\id(\alpha)=(J\alpha)_{\downarrow}$ for $J\in\id(Q)$;
an easy check shows that $\id(\alpha)$ extends $\alpha$ and $(J\alpha)_{\downarrow}$
is an ideal of $P$. Choose $w\in W$ and let $I_{w}(\mathscr{V})=\{q\in Q\mid w\in A\text{ and }A\in\Trim_{q}(\mathscr{V})\text{ for some }A\}$,
and similarly for $I_{w}(\mathscr{Y})$. We claim that $I_{w}(\mathscr{V})\id(\alpha)=I_{w}(\mathscr{Y})$.
For if $w\in A$ and $A\in\Trim_{q}(\mathscr{V})$, then $A\in\Trim_{q\alpha}(\mathscr{Y})$;
hence $I_{w}(\mathscr{V})\id(\alpha)\subseteq I_{w}(\mathscr{Y})$.
Conversely, if $w\in A$ and $A\in\Trim_{p}(\mathscr{Y})$, then we
can find $B\in\Trim_{q}(\mathscr{V})$, say, such that $w\in B\subseteq A$;
and then $B\in\Trim_{q\alpha}(\mathscr{Y})$ and $p\leqslant q\alpha\in I_{w}(\mathscr{V})\id(\alpha)$.
Therefore $\id(\mathscr{V})$ refines $\id(\mathscr{Y})$ via $\id(\alpha)$,
and the refinement is regular as the map $\alpha$ between the underlying
trim partitions is regular.
\end{proof}
The following Corollary makes precise the link between rank partitions
and trim partitions.
\begin{corollary}[Ideal completions]
\label{Theorem: rank partition =00003D ideal completion}Let $W$
be a primitive $\omega$-Stone space. Then the maps $\mathscr{Y}\mapsto\id(\mathscr{Y})$
and $\mathscr{Z}\mapsto\mathscr{Z}^{o}$ are inverse bijections between
the set of trim partitions $\mathscr{Y}$ and the set of rank partitions
$\mathscr{Z}$ of $W$. In particular, the rank partitions of $W$
take precisely the form $\id(\mathscr{Y})$, where $\mathscr{Y}$
is a trim partition of $W$.
\end{corollary}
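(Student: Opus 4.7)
The plan is to combine the output of Theorem~\ref{Theorem: ideal completion properties} with Proposition~\ref{Prop: completeness condns} and the regularity bookkeeping from Proposition~\ref{Prop:regrefprops}. First I would check that the two maps are well-defined on the claimed domains: if $\mathscr{Y}$ is a trim $P$-partition then $\id(\mathscr{Y})$ is a rank $\id(P)$-partition by the main theorem, and conversely, if $\mathscr{Z}$ is a rank $P$-partition then $\mathscr{Z}$ is strongly semi-trim, so $\mathscr{Z}^o$ is a trim $\widehat{P}$-partition (via the clean-interior construction quoted after the definition of semi-trim partitions).

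One composition is immediate: $(\id(\mathscr{Y}))^o = \mathscr{Y}$ is Theorem~\ref{Theorem: ideal completion properties}\ref{enu:SST1}. The substantive direction is to show that for a rank $P$-partition $\mathscr{Z}$, we have $\id(\mathscr{Z}^o) = \mathscr{Z}$ (up to relabelling the index set). Apply Theorem~\ref{Theorem: ideal completion properties}\ref{enu:SST4} with $\mathscr{V} = \mathscr{Z}$ and $\mathscr{Y} = \mathscr{Z}^o$ (so $\widehat{Q} = \widehat{P}$ since $\mathscr{V}^o = \mathscr{Z}^o$): this yields a regular refinement $\id(\mathscr{Z}^o) \prec_\beta \mathscr{Z}$ where $\beta\colon \id(\widehat{P}) \to P$ sends $J \mapsto \sup_P J$. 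By Proposition~\ref{Prop:regrefprops}\ref{enu:refine1-5} it suffices to show that $\beta$ is an isomorphism of PO systems, and by Proposition~\ref{Prop:regrefprops}\ref{enu:refine1-1} we already know it is a surjective morphism, so only injectivity remains.

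The hard part is injectivity of $\beta$, and this is where the hypothesis that $\mathscr{Z}$ is a rank partition (as opposed to merely strongly semi-trim) is essential: it delivers, via Proposition~\ref{Prop: completeness condns}\ref{enu:complete2}, that every element of $\widehat{P}$ is compact in $P$. Given ideals $J_1, J_2$ of $\widehat{P}$ with $\sup_P J_1 = \sup_P J_2 = p$, for each $q \in J_1$ I would extend $J_2$ to an ideal $\tilde{J}_2 = (J_2)_\downarrow$ of $P$ having the same supremum, observe $q \leqslant \sup_P \tilde{J}_2$, apply compactness of $q$ in $P$ to get $q \in \tilde{J}_2$, and conclude $q \in J_2$ since $J_2$ is a lower subset of $\widehat{P}$ and $q \in \widehat{P}$. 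Symmetry then gives $J_1 = J_2$.

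Finally, once $\id(\mathscr{Z}^o) = \mathscr{Z}$ is established (after relabelling along $\beta$), the two maps are mutually inverse bijections, and the ``in particular'' statement is just a restatement. I expect the only subtlety to be keeping the index-set relabelling transparent when quoting Proposition~\ref{Prop:regrefprops}\ref{enu:refine1-5}, but no further work is needed beyond the compactness argument above.
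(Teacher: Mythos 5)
Your proposal follows essentially the same route as the paper: well-definedness and one composition from Theorem~\ref{Theorem: ideal completion properties}, the refinement $\id(\mathscr{Z}^o)\prec_\beta\mathscr{Z}$ from part~\ref{enu:SST4} of that theorem, and injectivity of $\beta$ from compactness of the elements of $\widehat{P}$ in $P$ via Proposition~\ref{Prop: completeness condns}\ref{enu:complete2} --- your down-closure argument for injectivity is exactly the point the paper relies on. The one misstep is the appeal to Proposition~\ref{Prop:regrefprops}\ref{enu:refine1-5}: that proposition is stated for \emph{trim} partitions, whereas $\mathscr{Z}$ and $\id(\mathscr{Z}^o)$ are rank (hence complete semi-trim) partitions, which are not trim whenever $\widehat{P}\neq P$; the paper instead concludes directly that a refinement via a bijection between two \emph{complete} partitions forces equality, which is immediate and is the substitute you should use.
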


\begin{proof}
By Theorem~\ref{Theorem: ideal completion properties}, if $\mathscr{Y}$
is a trim partition of $W$ then $\id(\mathscr{Y})$ is a rank partition
of $W$ and $(\id(\mathscr{Y}))^{o}=\mathscr{Y}$. Moreover, if $\mathscr{Z}$
is a rank $Q$-partition, say, of $W$ then $\mathscr{Z}^{o}$ is
a trim $\widehat{Q}$-partition and $\id(\mathscr{Z}^{o})\prec_{\beta}\mathscr{Z}$,
where $J\beta=\sup_{Q}J$ for $J\in\id(\widehat{Q})$. But elements
of $\widehat{Q}$ are compact in $Q$, by Proposition~\ref{Prop: completeness condns}.
So if $J$ and $K$ are ideals of $\widehat{Q}$ such that $\sup_{Q}J=\sup_{Q}K$,
then $J=K$. So $\beta$ is a bijection, and $\id(\mathscr{Z}^{o})=\mathscr{Z}$
as the two partitions are complete, as required.
\end{proof}
\begin{corollary}[Orbit diagrams]
\label{Thm: orbit diagram}

Let $W$ be a primitive $\omega$-Stone space. Then $\id(\mathscr{X}(W))$
is the orbit diagram $\mathscr{O}(W)$ of~$W$. Moreover, $\mathscr{O}(W)$
regularly consolidates $\id(\mathscr{Y})$ for every trim partition
$\mathscr{Y}$ of $W$.
\end{corollary}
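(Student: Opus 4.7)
The plan is to prove the two assertions in turn, leaning heavily on Theorem~\ref{Theorem: ideal completion properties} applied to the canonical trim partition.

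For the first assertion, I would first show that every self-homeomorphism of $W$ is automatically an $\mathscr{S}(W)$-homeomorphism for $\mathscr{X}(W)$. Indeed, if $\phi\colon W\rightarrow W$ is a homeomorphism and $w\in X_{p}$, choose a neighbourhood base $\{A_{n}\}$ of $w$ of compact opens with $[A_{n}]=p$; then $\{\phi(A_{n})\}$ is a neighbourhood base of $\phi(w)$ of compact opens with $[\phi(A_{n})]=[A_{n}]=p$, so $\phi(w)\in X_{p}$. Hence the $\mathscr{S}(W)$-homeomorphism orbits coincide with the usual homeomorphism orbits. By Theorem~\ref{Theorem: ideal completion properties}\ref{enu:SST3}, the elements of $\id(\mathscr{X}(W))$ are exactly the $\mathscr{S}(W)$-homeomorphism orbits, which therefore agree with the elements of $\mathscr{O}(W)$ as sets.

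It remains to verify that the order relation on $\id(\mathscr{X}(W))$ as a PO system matches that of $\mathscr{O}(W)$. Both are determined by the limit-point structure in $W$: for $\id(\mathscr{X}(W))$ viewed as an $\id(\mathscr{S}(W))$-partition, the partition axiom gives $I_{x}<I_{w}$ in $\id(\mathscr{S}(W))$ exactly when $Y(I_{x})\subseteq Y(I_{w})^{\prime}$, i.e.\ when some point with ideal $I_{x}$ is a limit of distinct points with ideal $I_{w}$; and the orbit diagram relation $[x]<[w]$ is defined in precisely the same way. Since orbits and ideal-classes coincide by the preceding paragraph, the two order relations coincide, and $\id(\mathscr{X}(W))=\mathscr{O}(W)$ as PO-system-labelled partitions.

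For the second assertion, let $\mathscr{Y}$ be a trim $Q$-partition of $W$. By Corollary~\ref{Cor:morphism to canonical}, there is a unique surjective morphism $\beta\colon Q\rightarrow\mathscr{S}(W)$ with $\mathscr{Y}\prec_{\beta}\mathscr{X}(W)$. Applying Theorem~\ref{Theorem: ideal completion properties}\ref{enu:SST5} with $(P,\alpha)=(\mathscr{S}(W),\beta)$ then yields a map $\id(\beta)\colon\id(Q)\rightarrow\id(\mathscr{S}(W))$ with $\id(\mathscr{Y})\prec_{\id(\beta)}\id(\mathscr{X}(W))=\mathscr{O}(W)$, as required.

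The only point needing care is the order-relation check in the first part, since the elements of $\id(\mathscr{X}(W))$ and of $\mathscr{O}(W)$ are defined by rather different-looking recipes (neighbourhood-base trim types versus homeomorphism orbits); but once the sets of elements are identified, both relations reduce to ``some representative is a limit of distinct representatives of the other class,'' so no further work is needed.
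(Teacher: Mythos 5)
Your proposal is correct and follows essentially the same route as the paper's own proof: both parts rest on Theorem~\ref{Theorem: ideal completion properties}, with the first assertion reduced to the observation that every homeomorphism of $W$ preserves isomorphism types of compact opens and is therefore an $\mathscr{S}(W)$-homeomorphism, plus the order-relation check via the $\id(\mathscr{S}(W))$-partition axiom, and the second assertion obtained from Corollary~\ref{Cor:morphism to canonical} together with part~\ref{enu:SST5} of that theorem. You merely spell out the orbit identification and the order comparison in slightly more detail than the paper does.
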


\begin{proof}
Let $P=\mathscr{S}(W)$ and $\id(\mathscr{X}(W))=\{X(J)\mid J\in\id(P)\}$.
The first assertion follows from Theorem~\ref{Theorem: ideal completion properties}\ref{enu:SST3}:
homeomorphisms of $W$ preserve the isomorphism types of the compact
opens, and so are also $P$-homeomorphisms; and the order relation
on $\mathscr{O}(W)$ matches that on $\id(\mathscr{X}(W))$, since
$\id(\mathscr{X}(W))$ is an $\id(P)$-partition of $W$ and so for
$J,K\in\id(P)$, $X(J)\subseteq X(K)^{\prime}$ iff $J<K$.

Suppose now that $\mathscr{Y}$ is a trim $Q$-partition of $W$.
By Corollary~\ref{Cor:morphism to canonical}, there is a surjective
morphism $\beta\colon Q\rightarrow\mathscr{S}(W)$ such that $\mathscr{Y}$
regularly refines $\mathscr{X}(W)$ via $\beta$. So by Theorem~\ref{Theorem: ideal completion properties}\ref{enu:SST5}
and the above, $\id(\mathscr{Y})$ regularly refines $\mathscr{O}(W)$,
as required.
\end{proof}
\begin{remark}
The first statement of Corollary~\ref{Thm: orbit diagram} was established
by Hansoul~\cite[Proposition~10]{Hansoul} for the case of compact
PI primitive spaces $W$, for which $\mathscr{O}(W)$ is the canonical
rank partition. The following corollary extends the results of Hansoul
and Myers to locally compact Boolean spaces. However, two $\omega$-Stone
spaces admitting rank $P$-partitions for some PO system $P$ need
not be homeomorphic: as the spaces $X$ and $Y$ in Example~\ref{exa:Iso P, non homeo space}
are not homeomorphic, but $\mathscr{S}(X)\cong\mathscr{S}(Y)$, so
$\id(\mathscr{S}(X))\cong\id(\mathscr{S}(Y))$ and $X$ and $Y$ both
admit rank $\id(\mathscr{S}(X))$-partitions.
\end{remark}

\begin{corollary}[Rank partitions]
Let $W$ be an $\omega$-Stone space. Then the following are equivalent:
\begin{enumerate}
\item $W$ is primitive;
\item $W$ admits a rank partition;
\item the orbit diagram of $W$ forms a rank partition.
\end{enumerate}
\end{corollary}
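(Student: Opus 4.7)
The plan is to establish the cycle (1) $\Rightarrow$ (3) $\Rightarrow$ (2) $\Rightarrow$ (1), leveraging the machinery built up in the preceding subsection.

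For (1) $\Rightarrow$ (3), I would assume $W$ is primitive and invoke Theorem~\ref{Thm:canonical trim} to obtain the canonical trim partition $\mathscr{X}(W)$. Then Theorem~\ref{Theorem: ideal completion properties} shows $\id(\mathscr{X}(W))$ is a rank $\id(\mathscr{S}(W))$-partition of $W$, and Corollary~\ref{Thm: orbit diagram} identifies this with the orbit diagram $\mathscr{O}(W)$. Hence the orbit diagram forms a rank partition of $W$.

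The implication (3) $\Rightarrow$ (2) is immediate from the definitions: if $\mathscr{O}(W)$ is a rank partition then \emph{a fortiori} $W$ admits a rank partition.

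For (2) $\Rightarrow$ (1), I would suppose $\mathscr{X}$ is a rank $P$-partition of $W$ for some PO system $P$. By Theorem~\ref{Propn rank diagram}, $\mathscr{X}$ is strongly semi-trim, so its clean interior $\mathscr{X}^{o}$ is a trim $\widehat{P}$-partition of $W$ (using the definition of clean interior and~\cite[Proposition~4.10]{Apps-Stone} as cited just after the definition of semi-trim partitions). Then Theorem~\ref{Primitive=00003Dtrim} (equivalently, TP9) yields that $W$ is primitive.

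There is no real obstacle here: each implication follows almost directly from a result established earlier in the section, and the only mild care required is in (2) $\Rightarrow$ (1), where one must pass from the given rank partition to its clean interior to obtain an honest trim partition before appealing to Theorem~\ref{Primitive=00003Dtrim}.
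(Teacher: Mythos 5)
Your proposal is correct and uses essentially the same ingredients as the paper's proof: Theorem~\ref{Primitive=00003Dtrim} together with Theorem~\ref{Theorem: ideal completion properties} for producing a rank partition from primitivity, Theorem~\ref{Propn rank diagram} plus TP9 for the converse, and Corollary~\ref{Thm: orbit diagram} to bring in the orbit diagram. The only difference is cosmetic --- you arrange the implications as a cycle $(1)\Rightarrow(3)\Rightarrow(2)\Rightarrow(1)$ whereas the paper proves $(1)\Leftrightarrow(2)$ and then appeals to the orbit-diagram corollary for $(2)\Leftrightarrow(3)$.
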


\begin{proof}
If $W$ is primitive, then $W$ admits a trim partition by Theorem~\ref{Primitive=00003Dtrim},
and hence a rank partition by Theorem~\ref{Theorem: ideal completion properties}.
Conversely, if $W$ admits a rank partition $\mathscr{Z}$, then $\mathscr{Z}$
is semi-trim by Theorem~\ref{Propn rank diagram}, so $W$ is primitive
by~TP9. The equivalence of the second and third statements now follows
from Corollary~\ref{Thm: orbit diagram}.
\end{proof}

\subsection{Images of $\id(P)$ and complete extensions of a trim $P$-partition}

We can characterise complete semi-trim regular extensions of a given
trim $P$-partition of an $\omega$-Stone space as certain images
of $\id(P)$.
\begin{proposition}
\label{Propn maps}Let $W$ be an $\omega$-Stone space, $P$ a PO
system and $\mathscr{Y}$ a trim $P$-partition of $W$. Then there
is a natural correspondence between (i) complete regular semi-trim
extensions $\mathscr{Z}$ of $\mathscr{Y}$ and (ii) surjective order-preserving
maps $\beta\colon\id(P)\rightarrow Q$ for some PO system $Q$ satisfying
the following properties (viewing $P$ as a subset of $Q$):
\begin{enumerate}
\item \label{enu:map1}$\beta$ restricted to $P$ is an isomorphism onto
$P\beta$;
\item \label{enu:map2}if $J$ is an ideal of $P$, then $J\beta=\sup_{Q}J$;
\item \label{enu:map3}if $p\lneqq q$ for $p\in P$ and $q\in Q$, then
there is an ideal $J$ of $P$ such that $p\in J$ and $J\beta=q$;
\item \label{enu:map4}if $p\in P^{d}$ and $p=\sup_{Q}J$ for an ideal
$J$ of $P$ then $p\in J$.
\end{enumerate}
Under this correspondence, $\tau_{\mathscr{Z}}(w)=I_{w}\beta$ for
$w\in W$ if $\mathscr{Z}$ is a $Q$-partition. 

Moreover, $\mathscr{Z}$ is strongly semi-trim iff $J\beta\in P$
only when $J$ is principal; and $\mathscr{Z}$ is a rank partition
iff $\beta$ is an isomorphism, in which case properties~\ref{enu:map1}~to~\ref{enu:map4}
automatically hold.
\end{proposition}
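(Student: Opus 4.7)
The plan is to set up mutually inverse constructions between complete regular semi-trim extensions $\mathscr{Z}$ of $\mathscr{Y} = \{Y_p \mid p \in P\}^*$ and pairs $(Q, \beta)$ satisfying (a)--(d), then deduce the additional characterisations. The key structural fact, from Lemma~\ref{Lemma:regext}, is that for any such $\mathscr{Z}$ we have $\mathscr{Z}^o = \mathscr{Y}$ and $\widehat{Q} = P$, so trim sets and the ideals $I_w$ coincide in $\mathscr{Y}$ and $\mathscr{Z}$, and $P$ sits naturally as a sub-PO system of $Q$.

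For the direction $\mathscr{Z} \mapsto (Q, \beta)$, I define $\beta(J) = \sup_Q J$ for $J \in \id(P)$. By TP3 some $w$ has $I_w = J$, and by TP7 applied to $\mathscr{Z}$ we have $\tau_{\mathscr{Z}}(w) = \sup_Q I_w$, which gives well-definedness, surjectivity, order-preservation, and the identity $\tau_{\mathscr{Z}}(w) = I_w \beta$. Property~(a) is the observation $p_\downarrow \beta = p$; property~(b) is the definition; property~(d) follows from TP2 for $\mathscr{Y}$: if $p \in P^d$ and $J\beta = p$, any $w$ with $I_w = J$ is a clean point of $Y_p$, forcing $J = p_\downarrow$. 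For~(c), given $p \lneqq q$ in $Q$ with $p \in P$, pick $y \in Y_p$; the $Q$-partition relation gives $Y_p \subseteq Z_q^\prime$, so there exist $z_n \in Z_q$ with $z_n \to y$, and a $p$-trim $\mathscr{Y}$-neighbourhood of $y$ contains $z_n$ eventually, giving $p \in I_{z_n}$ and making $J = I_{z_n}$ work.

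Conversely, from $(Q, \beta)$ satisfying (a)--(d), set $Z_q = \{w \in W \mid I_w \beta = q\}$; surjectivity of $\beta$ and TP3 make these non-empty. The principal obstacle is verifying the $Q$-partition relation $Z_q^\prime \cap W = \bigcup_{r<q} Z_r$. For $\supseteq$, take $w \in Z_r$ with $r < q$; a trim neighbourhood $A$ of $w$ of type $p \in I_w$ has $p \leqslant r$ in $Q$, and in the main subcase $p \lneqq q$, property~(c) furnishes an ideal $J$ with $p \in J$ and $J\beta = q$, whereupon TP3 yields $z \in A \setminus \{w\}$ with $I_z = J$, so $z \in Z_q$; the degenerate subcase $p = q$ forces $q \in P \cap Q_1$ and is handled directly by the trim structure of $\mathscr{Y}$. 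For $\subseteq$, trim-neighbourhood arguments show $s = I_w\beta \leqslant q$; if $s = q$ with $q \in P^d$, property~(d) forces $I_w = q_\downarrow$ and the trim structure ($|A \cap Y_q| = 1$) contradicts $w \in Z_q^\prime$, whereas if $q \in Q \setminus P$, then $q = J\beta$ for a non-principal $J \in \id(P)_1$, so order-preservation forces $q \in Q_1$ and hence $s < q$ in the PO system. Axioms T1 and T2 transfer from $\mathscr{Y}$ via the common trim sets; ST3 holds since any trim $\mathscr{Y}$-neighbourhood of $w \in Z_p$ with $p \in P$ contains $Y_p$-points by the trim structure, making $w \in \overline{Y_p}$; regularity is Lemma~\ref{Lemma:regext}. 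The two constructions are inverse since $\tau_{\mathscr{Z}}(w) = I_w\beta$ recovers each from the other.

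For the additional characterisations, SST3 translates via TP2 to the condition that $I_w$ be principal whenever $w \in Z_p$ with $p \in P$, equivalently $J\beta \in P$ implies $J$ principal. Finally, $\mathscr{Z}$ is a rank partition iff, by Corollary~\ref{Theorem: rank partition =00003D ideal completion} combined with the bijection just established, $\mathscr{Z} = \id(\mathscr{Y})$, which under the correspondence is realised by $(Q, \beta) = (\id(P), \id_{\id(P)})$; a direct verification (using Lemma~\ref{Lemma:compact} for compactness of $P$ in $\id(P)$) shows that $(\id(P), \id)$ satisfies (a)--(d), and since these properties transport across isomorphisms of $\beta$, they hold automatically whenever $\beta$ is an isomorphism.
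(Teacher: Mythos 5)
Your proposal is correct in substance and establishes the same correspondence, but the two harder parts are handled by genuinely different routes from the paper's. For the direction from a pair $(Q,\beta)$ to a partition, the paper does not build $\mathscr{Z}$ by hand: it checks that properties (a)--(d) imply conditions (i)--(iv) of~\cite[Theorem~4.11]{Apps-Stone} and then invokes that theorem to produce the regular complete semi-trim extension with $\tau_{\mathscr{Z}}(w)=\sup_{Q}I_{w}$. You instead define $Z_{q}=\{w\mid I_{w}\beta=q\}$ directly and re-verify the $Q$-partition relation and axioms T1, T2, ST3; this is more self-contained but in effect re-proves the relevant special case of the cited theorem, and it is precisely here that your sketch has its only real soft spots. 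First, in the inclusion $Z_{r}\subseteq Z_{q}^{\prime}$ for $r<q$, when $q<q$ the ideal $J$ supplied by property (c) may coincide with $I_{w}$, in which case TP3 can return $z=w$; you must first shrink the trim neighbourhood to exclude $w$ (the same device the paper uses when proving that $\id(\mathscr{Y})$ is an $\id(P)$-partition). Second, order-preservation of $\beta$ in the forward direction is not automatic for PO systems: one needs the short argument that $J\beta=K\beta\in Q^{d}$ forces $J=K=p_{\downarrow}$ via property (d), which you assert but do not supply. For the rank-partition clause, the paper argues directly, using compactness of the elements of $\widehat{Q}$ in $Q$ (Proposition~\ref{Prop: completeness condns}) to make $\beta$ injective and order-reflecting, and Lemma~\ref{Lemma:compact} for the converse; you instead quote Corollary~\ref{Theorem: rank partition =00003D ideal completion} to identify a rank extension with $\id(\mathscr{Y})$ and hence $\beta$ with an isomorphism of $\id(P)$. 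That is legitimate (the corollary precedes this proposition, so there is no circularity) and shorter, at the cost of importing the full strength of that corollary where the paper only needs the two compactness facts. The forward construction $\beta(J)=\sup_{Q}J$ and the strongly-semi-trim characterisation are handled essentially as in the paper.
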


\begin{proof}
Suppose first that $P\subseteq Q$ and that $\mathscr{Z}$ is a complete
semi-trim $Q$-partition of $W$ that regularly extends $\mathscr{Y}$.
The map $\beta\colon\id(P)\rightarrow Q:J\mapsto\sup_{Q}J$ is surjective,
by~TP3~and~TP7. Properties~\ref{enu:map1}~and~\ref{enu:map2}
are immediate. For~\ref{enu:map3}, choose $B\in\Trim_{p}(\mathscr{Y})$,
choose $w\in B$ such that $\tau_{\mathscr{Z}}(w)=q$ (as by Lemma~\ref{Lemma:regext}
$B\in\Trim_{p}(\mathscr{Z})$) and let $J=I_{w}$ (using TP7). For~\ref{enu:map4}
choose $q\in J$, a $q$-trim set $B$, and $w\in B$ such that $I_{w}=J$
(TP3), so that $w\in X_{p}$ (TP7); but $w$ is a clean point by~\cite[Proposition~2.13,~STP5]{Apps-Stone},
so $I_{w}$ is principal (TP2) and $p\in J$. Lastly, $\beta$ is
order preserving: as if $J\leqslant K$ then $J\beta\leqslant K\beta$;
and if $J\beta=K\beta=p\in P^{d}$, noting that $Q^{d}=P^{d}$ by
Lemma~\ref{Lemma:regext}, then $J=K=p_{\downarrow}$ by~\ref{enu:map4}
and $J\nless K$; so if $J<K$ then $J\beta<K\beta$.

Conversely, if $\beta\colon\id(P)\rightarrow Q$ satisfies properties~\ref{enu:map1}~to~\ref{enu:map4},
then conditions (i) to (iv) of~\cite[Theorem~4.11]{Apps-Stone} are
satisfied as:

(i) for each $q\in Q$, choose $J\in\id(P)$ such that $J\beta=q$
and pick $p\in J$, so that $\{p\}_{\downarrow}\subseteq J$ and $p=\{p\}_{\downarrow}\beta\leqslant J\beta=q$;

(ii) if $p\in P$ and $q\in Q$ with $p\lneqq q$, then by~\ref{enu:map3}~and~\ref{enu:map2}
there is an ideal $J$ of $P$ such that $p\in J$ and $q=\sup_{Q}J$;

(iii) $Q$ is $\omega$-complete over $P$, as an increasing sequence
of elements of $P$ generates an ideal of $P$ whose image in $Q$
is their supremum, by~\ref{enu:map2}; 

(iv) is immediate from~\ref{enu:map4}.

Finally, $Q^{d}=P^{d}$: for by~\ref{enu:map1} if $p\in P$ then
$p<p$ in $P$ iff $p<p$ in $Q$; and if $J$ is a non-principal
ideal of $P$ then $J<J$ in $\id(P)$, so $J\beta<J\beta$ and $J\beta\notin Q^{d}$.

Hence by~\cite[Theorem~4.11]{Apps-Stone}, $\mathscr{Y}$ can be
regularly extended to a complete semi-trim $Q$-partition $\mathscr{Z}$,
say, of $W$, with $\tau_{\mathscr{Z}}(w)=\sup_{Q}I_{w}=I_{w}\beta$
for $w\in W$.

The stated necessary and sufficient condition for $\mathscr{Z}$ to
be strongly semi-trim is equivalent to $P$ being weakly separated
in $Q$, and so follows from Proposition~\ref{Prop: completeness condns}\ref{enu:complete1}. 

If $\mathscr{Z}$ is a rank partition then all elements of $P$ are
compact in $Q$ by Proposition~\ref{Prop: completeness condns}\ref{enu:complete2}.
So if $\sup_{Q}J=\sup_{Q}K$ for ideals $J$ and $K$ of $P$ then
$J=K$ (by considering $J_{\downarrow}$ and $K_{\downarrow}$ in~$Q$)
and so $\beta$ is injective. Finally, if $J\beta<K\beta$, then compactness
again gives $J<K$, as if $J=K=\{p\}_{\downarrow}$ for $p\in P^{d}$
then $J\nless J$, and so $\beta$ is an isomorphism. Conversely,
if $\beta$ is an isomorphism, then all elements of $P$ are compact
in $Q$ by Lemma~\ref{Lemma:compact} and so $\mathscr{Z}$ is a
rank partition by Proposition~\ref{Prop: completeness condns}\ref{enu:complete2};
moreover, $\beta$ preserves suprema, so properties~\ref{enu:map1}~to~\ref{enu:map4}
are immediate, as for~\ref{enu:map4} if $\{p\}_{\downarrow}\beta=p=\sup_{Q}J=J\beta$
then $J=\{p\}_{\downarrow}$.
\end{proof}
\begin{remark}
The previous proposition provides a ``recipe'' for constructing
semi-trim and strongly semi-trim partitions. Property~\ref{enu:map4}
above is redundant if we are considering strongly semi-trim or rank
partitions. It is not obvious whether or not property~\ref{enu:map2}
can be dispensed with.

Property~\ref{enu:map3} says that $\beta$ satisfies the morphism
conditions ``on~$P$''. However, $\beta$ need not be a morphism.
For example, let $\{J_{n}\mid n\geqslant1\}$ be copies of $\mathbb{N}$,
let $p_{n}$ be the minimum element of $J_{n}$, and let $P=\bigcup J_{n}$,
with the added relations $p_{n}<p_{n+1}$, with $P^{d}=\emptyset$.
Let $Q=P\cup\{r,s\}$ with added relations $p_{n}<r<r$, $J_{n}<s<s$
and $r<s$, and define $\beta\colon\id(P)\rightarrow Q$ by $p_{\downarrow}\beta=p$
for $p\in P$, $J_{n}\beta=\{s\}$ for each $n\geqslant1$ and $K\beta=r$,
where $K=\{p_{n}\mid n\geqslant1\}$. Then $\beta$ satisfies the
conditions of Proposition~\ref{Propn maps}, and $K\beta=r<s$, but
there is no ideal $L$ of $P$ such that $K<L$ and $L\beta=s$.
\end{remark}

\subsection{The prime topological Boolean algebra of a primitive space}

We recall that the \emph{prime TBA} (prime topological Boolean algebra)
$\mathscr{U}(W)$ of a space $W$ is the smallest complete sub-TBA
of $2^{W}$ under the usual (infinite) Boolean operations together
with the derived set unitary operation $^{\prime}$. We write $Fi(W)$
for the set of subsets of $W$ invariant under all homeomorphisms
of $W$, which is also a complete sub-TBA of $2^{W}$, and whose elements
are unions of elements of $\mathscr{O}(W)$. 

Hansoul~\cite[Theorem~12]{Hansoul} obtained the following result
for the case of compact pseudo-indecomposable $W$; we extend this
to general primitive $\omega$-Stone spaces.
\begin{theorem}[Hansoul]
\label{Fi=00003Dprime TBA}Let $W$ be a primitive $\omega$-Stone
space. Then the partition formed by the atoms of $\mathscr{U}(W)$
is equal to $\mathscr{O}(W)$, and $\mathscr{U}(W)=Fi(W)$. 
\end{theorem}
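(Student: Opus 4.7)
The plan is to prove both inclusions $\mathscr{U}(W) \subseteq Fi(W)$ and $Fi(W) \subseteq \mathscr{U}(W)$ and to identify the atoms of $\mathscr{U}(W)$ with $\mathscr{O}(W)$. For the easy direction, every homeomorphism of $W$ commutes with complete Boolean operations and with the derived-set operator, so $Fi(W)$ is itself a complete sub-TBA of $2^W$ containing $W$, and therefore contains the minimal such, namely $\mathscr{U}(W)$. Separately, the atoms of $Fi(W)$ coincide with $\mathscr{O}(W)$: each orbit $[w]$ is invariant, and any non-empty invariant subset of $[w]$ must equal $[w]$; since orbits partition $W$, every non-empty element of $Fi(W)$ contains an orbit, so $Fi(W)$ is atomic with atom-partition $\mathscr{O}(W)$ and equals $\{\bigcup \mathscr{S} : \mathscr{S} \subseteq \mathscr{O}(W)\}$.

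It therefore suffices to prove $\mathscr{O}(W) \subseteq \mathscr{U}(W)$: combined with completeness of $\mathscr{U}(W)$ under unions, this forces $Fi(W) \subseteq \mathscr{U}(W)$, hence equality, and forces their atom-partitions to coincide. By Corollary~\ref{Thm: orbit diagram}, $\mathscr{O}(W) = \id(\mathscr{X}(W))$, so its elements are the sets $X(J) = \{w \in W : I_w = J\}$ indexed by the ideals $J$ of $P = \mathscr{S}(W)$, with $X(p_\downarrow) = X_p$ for principal $J$.

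The cleanest route is a reduction to Hansoul's compact PI case. By Lemma~\ref{Prop: direct sum}\ref{enu:sum1}, together with a finite PI sub-decomposition of each primitive summand via TP5 applied to $\mathscr{X}(W)$, decompose $W = \bigcup_{k \geq 1} W_k$ into clopen compact PI primitive subspaces. Each $W_k$ inherits the subspace topology, and since $W_k$ is clopen the derived-set operator on $2^{W_k}$ agrees with $S \mapsto S' \cap W_k$; hence whenever $W_k \in \mathscr{U}(W)$, the whole algebra $\mathscr{U}(W_k)$ embeds into $\mathscr{U}(W)$ as $\{T \cap W_k : T \in \mathscr{U}(W)\}$. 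Hansoul's theorem~\cite[Theorem~12]{Hansoul} applied to each compact PI $W_k$ then places every local orbit of $W_k$ inside $\mathscr{U}(W_k) \subseteq \mathscr{U}(W)$. Two points of $W$ lie in the same global orbit iff they have the same ideal $I_w$ (by Corollary~\ref{Thm: orbit diagram}), so a global orbit $X(J)$ is a countable union of local orbits of matching type across the $W_k$'s, and this assembly stays in $\mathscr{U}(W)$.

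The main obstacle is the recognition step: demonstrating that each $W_k$, or at least each union $\bigcup \{W_k : W_k \cong W_{k_0}\}$ of compact PI primitive clopens of a common homeomorphism type, lies in $\mathscr{U}(W)$. Concretely, iterated applications of the derived-set operator combined with Boolean operations must be powerful enough to separate compact PI pieces of distinct topological type. A natural route is transfinite induction on Cantor--Bendixson rank of $W$, refined by local invariants extractable from iterated derivatives on clopen subsets (which reflect the PI structure of each piece), and these should collectively distinguish the homeomorphism types of compact PI primitive clopens. Once this separation is achieved, Hansoul's theorem applied piece-by-piece, together with the countable assembly of local orbits into global orbits $X(J)$, completes the proof.
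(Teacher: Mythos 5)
Your reduction is sound as far as it goes: $\mathscr{U}(W)\subseteq Fi(W)$ because $Fi(W)$ is a complete sub-TBA, the atoms of $Fi(W)$ are the orbits, and everything comes down to showing $\mathscr{O}(W)\subseteq\mathscr{U}(W)$. But the argument then stalls exactly where you say it does, and the "recognition step" you defer is not a technical loose end -- it is the entire content of the theorem, and the route you sketch for it cannot work. First, the intermediate claim that each compact PI clopen piece $W_{k}$ (or that enough of them) lies in $\mathscr{U}(W)$ is generally false: if $W$ is the disjoint union of two copies of the Cantor set, then $W$ has a single orbit, $\mathscr{U}(W)=Fi(W)=\{\emptyset,W\}$, and neither copy belongs to $\mathscr{U}(W)$; so the proposed embedding $\mathscr{U}(W_{k})\hookrightarrow\mathscr{U}(W)$ via $T\mapsto T\cap W_{k}$ is unavailable. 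Second, the suggested engine for recognition -- Cantor--Bendixson rank and iterated derived sets -- gives no information on perfect spaces, where $W^{\prime}=W$ at the first step; yet primitive spaces are typically perfect (any $W$ with $\mathscr{S}(W)^{d}=\emptyset$), so iterated derivatives of clopen sets cannot separate homeomorphism types of the pieces. Even the weaker goal of putting the union of all pieces of a fixed homeomorphism type into $\mathscr{U}(W)$ is essentially equivalent to the theorem and is nowhere established.

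For comparison, the paper avoids piecewise decomposition entirely. It takes the partition $\mathscr{Y}$ of $W$ by the atoms of $\mathscr{U}(W)$, observes that $\mathscr{O}(W)$ refines $\mathscr{Y}$ via a surjective order-preserving map $\alpha\colon\id(P)\rightarrow Q$ with $P=\mathscr{S}(W)$, and then uses trim sets of the canonical partition to show that $\alpha$ restricted to $P$ is a morphism and cannot identify a principal ideal with a non-principal one. Simplicity of $\mathscr{S}(W)$ (Theorem~\ref{Thm:trim partitions}) then forces $\alpha$ to be injective on $P$, so each $X_{p}$ ($p\in P$) is itself an atom of $\mathscr{U}(W)$ and in particular lies in $\mathscr{U}(W)$. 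Finally Lemma~\ref{Lemma:ideal generation} expresses every remaining orbit as $\overline{X_{J}}-\bigcup_{K\subsetneqq J}\overline{X_{K}}$, which is manifestly in the complete TBA generated by the $X_{p}$. If you want to salvage your outline, you would need to replace the CB-rank recognition step with an argument of this global, order-theoretic kind; as written, the proof has a genuine gap.
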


\begin{proof}
Let $R=\Co(W)$, and let $\mathscr{Y}=\{Y_{q}\mid q\in Q\}$ be the
partition of $W$ consisting of the atoms of $\mathscr{U}(W)$; $Q$
becomes a PO system with the relation $q<r$ iff $Y_{q}\subseteq Y_{r}^{\prime}$,
and $\mathscr{Y}$ is a complete $Q$-partition of $W$, as $Y_{q}\cap Y_{r}^{\prime}$
is either $Y_{q}$ or $\emptyset$ for each $q,r\in Q$ (we note that
$<$ is antisymmetric, by Hansoul~\cite[Lemma~1.11(1)]{Hansoul2}). 

Now $Fi(W)$ is also a complete sub-TBA of $2^{W}$, so $\mathscr{U}(W)\subseteq Fi(W)$.
By Corollary~\ref{Thm: orbit diagram}, the atoms of $Fi(W)$ are
$\mathscr{O}(W)=\{X_{p}\mid p\in\id(P)\}$, say, where $P=\mathscr{S}(W)$.
Hence $\mathscr{O}(W)$ refines~$\mathscr{Y}$: say $X_{p}\subseteq Y_{p\alpha}$,
where $\alpha\colon\id(P)\rightarrow Q$ is surjective; $\alpha$
is also order-preserving, as if $p<r$ then $X_{p}\subseteq X_{r}^{\prime}\subseteq Y_{r\alpha}^{\prime}$,
so $p\alpha<r\alpha$. For $A\in R$, write $T(A)=\{q\in Q\mid A\cap Y_{q}\neq\emptyset\}$.

We claim first that if $p\alpha=J\alpha=q$, say, where $p\in P$
and $J\in\id(P)$ is non-principal, then we can find $r\in J$ such
that $s\alpha=q$ for $r\leqslant s\in J$. For choose $A\in\Trim_{p}(\mathscr{O}(W))$;
then $T(A)=(p_{\uparrow})\alpha=q{}_{\uparrow}$, as $q_{\uparrow}\subseteq T(A)$,
and $\alpha$ is order-preserving. Let $N_{q}=Q-q_{\uparrow}$; then
$Y_{q}\cap\overline{Y_{N_{q}}}=\emptyset$, so any element of $Y_{q}$
has a neighbourhood of type $q_{\uparrow}$ in $\mathscr{Y}$. Choose
$w\in W$ such that $I_{w}=J$ (TP3), so that $w\in Y_{q}$, and
find $C\in\Trim_{r}(\mathscr{O}(W))$, say, such that $w\in C$ and
$T(C)=q_{\uparrow}$, so that $r\in J$. If now $r\leqslant s\in J$,
then $q\leqslant r\alpha\leqslant s\alpha\leqslant J\alpha=q$, and
so $s\alpha=q$ as required.

We show next that $\alpha$ restricted to $P$ is a morphism. For
if $p,q\in P$ and $p\alpha<q\alpha$, then by considering $T(A)$
for $A\in\Trim_{p}(\mathscr{O}(W))$ we can find $J\in\id(P)$ such
that $p_{\downarrow}<J$ and $J\alpha=q\alpha$. If $J$ is non-principal,
find $s\in J$ such that $s\alpha=q\alpha$, and we may choose $s>p$
as $J$ is an ideal; if however $J=s_{\downarrow}$ with $s\in P$,
then $p<s$ and $s\alpha=q\alpha$.

But $P$ is simple. So $\alpha$ is injective on $P$, and by the
above we cannot have $p\alpha=J\alpha$ with $J$ non-principal. Hence
$X_{p}=Y_{p\alpha}\in\mathscr{U}(W)$ for $p\in P$. But now by Lemma~\ref{Lemma:ideal generation},
$X_{p}\in\mathscr{U}(W)$ for all $p\in\id(P)$. So $\mathscr{Y}=\mathscr{O}(W)$
and $\mathscr{U}(W)=Fi(W)$.
\end{proof}
\begin{lemma}[{Hansoul~\cite[Lemma~11]{Hansoul}}]
\label{Lemma:ideal generation}Let $N$ be a PO system, $J$ an ideal
of $N$ and $\{U_{n}\mid n\in N\}^{*}$ a trim $N$-partition of the
$\omega$-Stone space $W$. Then $\{w\in W\mid I_{w}=J\}=\overline{U_{J}}-\bigcup_{K\in M}\overline{U_{K}}$,
where $M=\{K\in\id(N)\mid K\subsetneqq J\}$. 
\end{lemma}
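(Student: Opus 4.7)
The plan is to establish the set equality by two inclusions, exploiting three pieces of structure already available: (i) for any trim $A$, $T(A) = t(A)_{\uparrow}$ is an upper subset of $N$; (ii) ideals of $N$ are lower subsets; and (iii) by T1, every point of $W$ has a neighbourhood base of trim sets, so any open neighbourhood of $w$ contains a trim neighbourhood of $w$ of type in $I_w$.

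For the inclusion $\{w \mid I_w = J\} \subseteq \overline{U_J} - \bigcup_{K \in M}\overline{U_K}$, assume $I_w = J$. To see $w \in \overline{U_J}$, take any open neighbourhood $V$ of $w$; by~T1 it contains a trim neighbourhood $A$ of $w$ with $t(A) \in I_w = J$, and then $A \cap U_{t(A)} \neq \emptyset$ with $U_{t(A)} \subseteq U_J$. To see $w \notin \overline{U_K}$ for $K \in M$, pick $p \in J - K$ (possible since $K \subsetneqq J$) and a $p$-trim neighbourhood $A$ of $w$: if $A$ met $U_K$ at some point of $U_q$ with $q \in K$, then $q \in T(A) = p_{\uparrow}$, so $p \leqslant q$, forcing $p \in K$ by the lower-set property of $K$ and contradicting $p \notin K$.

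For the reverse inclusion, suppose $w \in \overline{U_J}$ and $w \notin \overline{U_K}$ for every $K \in M$. First, $I_w \subseteq J$: given $p \in I_w$, a $p$-trim neighbourhood $A$ of $w$ must meet $U_J$, so $A$ meets some $U_q$ with $q \in J$, whence $q \in T(A) = p_{\uparrow}$ and $p \leqslant q$ puts $p \in J$ since $J$ is a lower set. Since $I_w$ is itself an ideal of $N$ by~TP1, if $I_w \subsetneqq J$ then $I_w \in M$, so it will suffice to show $w \in \overline{U_{I_w}}$ to obtain a contradiction and force $I_w = J$.

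The key (and essentially only) observation requiring any case analysis is that $w \in \overline{U_{I_w}}$ always holds: if $I_w$ is principal, say $I_w = p_{\downarrow}$, then by~TP2 $w$ is a clean point, and one checks $w \in U_p \subseteq U_{I_w}$; if $I_w$ is non-principal (so $w$ is a limit point), then every open neighbourhood of $w$ contains a trim set of some type $q \in I_w$ by~T1, which meets $U_q \subseteq U_{I_w}$. No step presents any real obstacle; the whole argument is driven by the upper-set/lower-set duality between trim types and ideals, together with the density property~T1.
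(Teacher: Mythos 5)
Your proof is correct and follows essentially the same route as the paper's: the paper condenses the argument to the single equivalence $w\in\overline{U_{K}}$ iff $I_{w}\subseteq K$ (for any ideal $K$) and then takes $K=I_{w}$ when $I_{w}\subsetneqq J$, which is exactly the logic of your two inclusions, using the same ingredients (T1, TP1, the upper-set property of $T(A)$ for trim $A$, and the lower-set property of ideals). Your case split on whether $I_{w}$ is principal is unnecessary --- the non-principal argument for $w\in\overline{U_{I_{w}}}$ works uniformly --- but this is harmless.
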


\begin{proof}
It is enough to show that if $w\in W$ and $K$ is an ideal of $N$,
then $w\in\overline{U_{K}}$ iff $I_{w}\subseteq K$. But if $I_{w}\subseteq K$
then each $A\in V_{w}$ meets $U_{K}$ (TP3), while if $I_{w}\nsubseteq K$
then for $r\in I_{w}-K$ we can find $A\in\Trim_{r}(\{U_{n}\}^{*})$
such that $w\in A$, so that $w\notin\overline{U_{K}}$.
\end{proof}
\begin{remark}
The converse of Theorem~\ref{Fi=00003Dprime TBA} is false in general.
For example, one can construct a compact $\omega$-Stone space $W$
with a distinguished point $w_{0}$ such that $X=W-\{w_{0}\}$ is
primitive but $W$ is not, and for which the atoms of both $\mathscr{U}(W)$
and $Fi(W)$ are equal to those of $\mathscr{U}(X)$ (being the same
as $Fi(X)$) together with $\{w_{0}\}$.
\end{remark}

\section{Touraille's characterisation of primitive spaces}

Finally we provide a topological proof of a result of Touraille~\cite[Theorem~4.1]{Touraille primitive},
extended to locally compact $\omega$-Stone spaces. We recall that
the set of open subsets $\mathscr{H}(W)$ of a Stone space~$W$ becomes
a \emph{complete Heyting{*} algebra }(see~\cite{Touraille Heyting}
for full definition) with the definitions for open subsets $\{I_{i}\},I,J$:
$\sup\{I_{i}\}=\bigcup I_{i}$; $\inf\{I_{i}\}=[\bigcap I_{i}]^{o}$,
where $A^{o}$ denotes the interior of a set $A$; $I\rightarrow J=[(W-I)\cup J]^{o}$;
and with the added unary operation $I^{*}=W-(W-I)^{\prime}$, so that
$I^{*}$ contains $I$ and any isolated points of $W-I$. We recall
further that an element $J$ of a complete lattice is \emph{completely
join irreducible (cji)} if whenever $J\leqslant\sup E$ for some subset
$E$ of the lattice, then $J\leqslant I$ for some $I\in E$. Let
$\Inv(W)$ denote the set of open subsets of $W$ invariant under
homeomorphisms of~$W$, which is a complete Heyting{*} subalgebra
of $\mathscr{H}(W)$.

First, we have a proposition that sheds light on how far we can get
towards a trim partition of a general Stone space.
\begin{proposition}
\label{Heyting general}Let $W$ be the Stone space of the Boolean
ring $R$, and let $\mathscr{V}$ be a complete Heyting{*} subalgebra
of $\mathscr{H}(W)$. Let $P=\mathscr{V}_{\mathrm{cji}}$ denote the
cji elements of $\mathscr{V}$. For $A\in R$, let $I(A)$ denote
the smallest element of $\mathscr{V}$ containing $A$. Then there
is a relation $<$ such that $(P,<)$ becomes a PO system, and a $P$-partition
$\mathscr{Y}=\{Y_{U}\mid U\in P\}^{*}$ of $W$ such that:
\begin{enumerate}
\item \label{enu:cji1}the $U$-supertrim sets, namely $\Trim_{U}(\mathscr{Y})\cap2^{U}$,
consist of all $A\in R$ such that $I(A)=U$, with also $|A\cap Y_{U}|=1$
if $U\in P^{d}$, for $U\in P$;
\item \label{enu:cji2}each point of $Y_{U}$ is clean and has a supertrim
neighbourhood;
\item \label{enu:cji4}$U$ is the union of the $U$-supertrim sets for
$U\in P$;
\item \label{enu:cji3}for $A\in R$, $A$ is a disjoint union of supertrim
sets iff $I(A)=\bigcup\{U\mid U\in Q\}$ for some subset $Q$ of $P$.
\end{enumerate}
\end{proposition}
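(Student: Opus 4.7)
The plan is to construct the partition by assigning each $w \in W$ a canonical ``type''. For each $w$, the family $\mathcal{N}_w = \{I(A) : A \in R,\ w \in A\}$ is downward directed in $\mathscr{V}$, since $I(A_1 \cap A_2) \subseteq I(A_1) \cap I(A_2)$. Define
\[
Y_U = \{w \in W : \exists A_0 \in R,\ w \in A_0,\ I(A_0) = U,\text{ and }I(B) = U\text{ for every }B \in R\text{ with }w \in B \subseteq A_0\},
\]
that is, $Y_U$ consists of the points at which $\mathcal{N}_w$ stabilises at $U$. Disjointness of the $Y_U$'s is immediate: intersecting two witnesses for $U$ and $V$ forces $U=V$, and $Y_U \subseteq U$ holds because $A_0 \subseteq I(A_0) = U$. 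To show $Y_U \cap A \neq \emptyset$ whenever $I(A)=U$---which yields the nonemptiness of $Y_U$ and one direction of~(a)---observe that if no $w \in A$ lay in $Y_U$, each $w$ would admit $B_w \in R$ with $w \in B_w \subseteq A$ and $I(B_w) \subsetneq U$; compactness of $A$ extracts a finite subcover $B_{w_1},\ldots,B_{w_n}$, giving $U = I(A) = \bigcup_i I(B_{w_i})$, which contradicts cji of~$U$. A variant yields~(c): cover $U$ by compact opens $A_i \subseteq U$, note $U = \bigcup I(A_i)$, apply cji of $U$ to obtain some $A^*$ with $I(A^*)=U$, and enlarge by a compact open neighbourhood $A_w$ of any $w \in U$ to get a $U$-supertrim set $A^* \cup A_w$ containing $w$.

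Equip $P = \mathscr{V}_{\mathrm{cji}}$ with the relation $U < V$ iff $Y_U \subseteq Y_V'$, setting $V \in P^d$ iff $V \not< V$. The main $P$-partition identity $Y_V' \cap Y_P = \bigcup_{U < V} Y_U$ reduces via disjointness to showing that the ``limit of $Y_V$'' property is homogeneous across each $Y_U$: if some $w \in Y_U$ is a limit of $Y_V$, then so is every $w' \in Y_U$. My plan is to take an arbitrary $U$-supertrim neighbourhood $A_0'$ of $w'$, and use the cji decomposition of $V$ inside a fixed $U$-supertrim neighbourhood of the original $w$, combined with cji of~$V$ applied to disjoint splittings, to extract a $V$-supertrim $B' \subseteq A_0'$. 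This transfer argument, together with antisymmetry and transitivity of~$<$, is what I expect to be the principal obstacle.

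Finally, properties (a)--(d) follow: (a) combines the $Y_U \cap A \neq \emptyset$ argument above with a splitting argument showing that $|A \cap Y_U| \geq 2$ for a $U$-supertrim $A$ forces a disjoint decomposition $A = B \cup C$ with $I(B) = I(C) = U$ by cji of~$U$ applied iteratively, producing $U < U$ and hence contradicting $U \in P^d$; (b) is immediate from the definition of $Y_U$, with the witness $A_0$ serving as the required supertrim neighbourhood; (c) was established above; and (d) follows from (a) via a finite disjoint decomposition of $A \in R$ whose pieces match the cji decomposition of $I(A)$ delivered by the Stone-space compactness argument.
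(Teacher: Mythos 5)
Your preliminary steps are sound: the compactness-plus-cji argument showing that $A\cap Y_U\neq\emptyset$ whenever $I(A)=U$, and the existence of some compact open $A\subseteq U$ with $I(A)=U$, are both correct and are facts the paper's proof also relies on. But the heart of the proposition --- that $\mathscr{Y}$ is a $P$-partition, i.e.\ the dichotomy that for all $U,V\in P$ either $Y_U\subseteq Y_V'$ or $Y_U\cap Y_V'=\emptyset$ --- is precisely the step you leave as a ``plan'' and an ``expected principal obstacle'', and your sketch of it (cji decompositions and disjoint splittings) is missing the idea that actually makes it work. The paper does not transfer limit points between supertrim neighbourhoods; it uses the Heyting{*} operations to exhibit the relevant sets as elements of $\mathscr{V}$ and then invokes minimality of $I(A)$. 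Concretely, with $Y_V=V-V^{<}$ one has $(W-Y_V)^{o}=V\rightarrow V^{<}$ and $W-Y_U'=\bigl((W-Y_U)^{o}\bigr)^{*}=(U\rightarrow U^{<})^{*}$; so if some $w\in Y_U$ has a compact open neighbourhood $A\subseteq U$ missing $Y_V$ (resp.\ missing $Y_U-\{w\}$), then $A\subseteq U\cap(V\rightarrow V^{<})\in\mathscr{V}$ (resp.\ $A\subseteq U\cap(U\rightarrow U^{<})^{*}\in\mathscr{V}$), whence $U=I(A)$ forces $U\cap Y_V=\emptyset$ (resp.\ $U\cap Y_U'=\emptyset$). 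The $*$ operation is essential: it is the only operation in the Heyting{*} signature that sees derived sets, and the reflexive case ($U<U$ versus $U\nless U$) requires removing a single point of $Y_U$ from a neighbourhood while staying inside an element of $\mathscr{V}$, which cji arguments and $\rightarrow$ alone do not provide. Since parts (a) and (b) as stated require $T(A)=U_{\uparrow}$ for a supertrim $A$ (cleanness and trimness are assertions about which $Y_V$ a set meets), they also rest on this unproven step, so the gap is not localised.

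Two smaller points. First, your $Y_U$ (points where the filter $\{I(B)\mid w\in B\in R\}$ stabilises at $U$) is not obviously the paper's $Y_U=U-\bigcup\{V\in P\mid V\subsetneq U\}$: a point of $U$ lying in no cji element properly below $U$ could still have arbitrarily small neighbourhoods $B$ with $I(B)$ a \emph{non-cji} element properly below $U$, so you would need either to prove the two definitions agree or to verify directly that your $Y_U$'s exhaust $U$ as required for (c). Second, in (c) the enlargement $A^{*}\cup A_w$ can fail to be $U$-supertrim when $U\in P^{d}$, since it may meet $Y_U$ in more than one point; this is repairable (for $w\notin Y_U$ choose $A_w$ disjoint from $Y_U$, using $U\cap Y_U'=\emptyset$), but only once the partition identity is in place.
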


\begin{proof}
For $U\in P$, let $U^{<}=\bigcup\{V\in P\mid V\subsetneqq U\}$,
so that $U^{<}\neq U$, and let $Y_{U}=U-U^{<}$. For $A\in R$, write
$T(A)=\{U\in P\mid A\cap Y_{U}\neq\emptyset\}$; we note that for
$A\subseteq U\in P$, $I(A)=U$ iff $U\in T(A)$. 

Define a relation $<$ on $P$ by $U<V$ iff $Y_{U}\subseteq Y_{V}^{\prime}$;
this determines a PO system as $<$ is antisymmetric (e.g.\ by Naturman~\cite[Lemma~4.2.7]{Naturman}). 

Firstly, if $U\neq V$ ($U,V\in P)$, then $Y_{U}\cap Y_{V}=\emptyset$:
as if $w\in Y_{U}\cap Y_{V}$, we can find $A\in R$ such that $w\in A\subseteq U\cap V$,
and then $I(A)=U=V$.

Next, $\mathscr{Y}$ is a $P$-partition. For clearly $Y_{V}^{\prime}\cap Y_{P}\supseteq\bigcup_{U<V}Y_{U}$.
Suppose $U\nless V$ with $U\neq V$, and choose $w\in Y_{U}-Y_{V}^{\prime}$.
Find $A\in R$ such that $w\in A\subseteq U$ and $A\cap Y_{V}=\emptyset$.
Let $B=U\cap(W-Y_{V})^{o}$, so that $A\subseteq B\in\mathscr{V}$
(as $(W-Y_{V})^{o}=V\rightarrow V^{<}$). Now $I(A)=U$; hence $B=U$,
$U\cap Y_{V}=\emptyset$ and $Y_{U}\cap Y_{V}^{\prime}=\emptyset$.
Similarly, if $U\nless U$, apply the same argument but taking $B=U\cap(W-Y_{U}^{\prime})$,
noting that $Y_{U}^{\prime}=\overline{Y_{U}}^{\prime}$, so $W-Y_{U}^{\prime}=(W-\overline{Y_{U}})^{*}=\left\{ (W-Y_{U})^{o}\right\} ^{*}=(U\rightarrow U^{<})^{*}$,
to obtain $U\cap Y_{U}^{\prime}=\emptyset$. Thus $Y_{V}^{\prime}\cap Y_{P}=\bigcup_{U<V}Y_{U}$
as required.

For~\ref{enu:cji1}, suppose first that $A\in\Trim_{U}(\mathscr{Y})$
and $A\subseteq U$. Then $I(A)=U$ and we must also have $|A\cap Y_{U}|=1$
if $U\in P^{d}$. Conversely, if these conditions hold, then $A\subseteq U$,
$U\in T(A)$ and $T(A)\supseteq U_{\uparrow}$. If $U\nless V$ and
$U\neq V$, then $U\cap Y_{V}=\emptyset$ by the above and so $V\notin T(A)$;
hence $A\in\Trim_{U}(\mathscr{Y})$ as required. 

For~\ref{enu:cji2}, for any $w\in Y_{U}$ ($U\in P)$ we can choose
$A\in R$ such that $w\in A\subseteq U$, so that $T(A)=U_{\uparrow}$
(as above). If also $U\in P^{d}$, then $Y_{U}\cap Y_{U}^{\prime}=\emptyset$,
so $w\notin Y_{U}^{\prime}$ and we can reduce $A$ as necessary to
assume that $|A\cap Y_{U}|=1$. Hence $w$ is a clean point.

For~\ref{enu:cji4}, we use~\ref{enu:cji1} and~\ref{enu:cji2},
and observe that if $A\in R$ is $U$-supertrim and $B\in R$ with
$B\subseteq U^{<}$, then $A\cup B$ is also $U$-supertrim.

For~\ref{enu:cji3}, if $A=A_{1}\dotplus\ldots\dotplus A_{n}$, where
$A_{i}\in\Trim_{U_{i}}(\mathscr{Y})\cap2^{U_{i}}$ ($U_{i}\in P$),
then it is easy to see that $I(A)=\bigcup_{i\leqslant n}I(A_{i})=\bigcup_{i\leqslant n}U_{i}$. 

Conversely, if $I(A)=\bigcup\{U\mid U\in Q\}$ for some subset $Q$
of $P$, then by compactness we can write $I(A)=\bigcup_{i\leqslant n}U_{i}$,
with each $U_{i}\in P$, and with the $U_{i}$ chosen so as to minimise
$n$. Considering each $U_{i}$ as a union of elements of $R$ and
using compactness again, we can write $A=A_{1}\dotplus\ldots\dotplus A_{n}$,
where $A_{i}\in R$ and $A_{i}\subseteq U_{i}$ for each $i$. So
$U_{i}\subseteq I(A)\subseteq\bigcup_{j\leqslant n}I(A_{j})$, and
we must therefore have $I(A_{i})=U_{i}$ as $U_{i}$ is cji and $n$
is minimal (cf the proof of Touraille~\cite[Proposition~3.2]{Touraille primitive}).
Moreover, if $U_{i}\in P^{d}$, then $U_{i}\cap Y_{U_{i}}^{\prime}=\emptyset$
by the above, so $A_{i}\cap Y_{U_{i}}^{\prime}=\emptyset$ and $A_{i}\cap Y_{U_{i}}$
is finite; therefore we can write $A_{i}$ as a disjoint union of
sets $B_{ij}$ such that |$B_{ij}\cap Y_{U_{i}}|=1$, and now each
$B_{ij}$ will be $U_{i}$-supertrim. Hence we can express $A$ as
a disjoint union of supertrim sets, as required. 
\end{proof}
\begin{remark}
\label{Alternative defn}The relation $<$ on $P$ could alternatively
be defined by setting $U<V$ iff $V\subseteq U$ when $U\neq V$,
with $U<U$ iff $Y_{U}$ is not discrete. For if $U<V$, pick $w\in Y_{U}$
and $A\in R$ such that $w\in A\subseteq U$; then $A\cap Y_{V}\neq\emptyset$,
so we can find $B\subseteq A\cap V$ with $B\cap Y_{V}\neq\emptyset$,
so that $V=I(B)\subseteq I(A)=U$; and if $U<U$ then $Y_{U}\subseteq Y_{U}^{\prime}$,
so $Y_{U}$ is not discrete. Conversely, if $V\subseteq U$ with $U\neq V$,
then $U<V$ as otherwise $U\cap Y_{V}=\emptyset$; and if $Y_{U}$
is not discrete then $U\cap Y_{U}^{\prime}\supseteq Y_{U}\cap Y_{U}^{\prime}\neq\emptyset$
and so $U<U$ (in both cases, using results from the above proof that
$\mathscr{Y}$ is a $P$-partition).
\end{remark}

\begin{theorem}[Touraille]
Let $W$ be an $\omega$-Stone space and $\mathscr{V}$ a complete
Heyting{*} subalgebra of $\Inv(W)$. Let $\mathscr{V}_{\mathrm{cji}}$
denote the cji elements of $\mathscr{V}$. Then $W$ is primitive
iff $\mathscr{V}_{\mathrm{cji}}$ is a basis for $\mathscr{V}$. 
\end{theorem}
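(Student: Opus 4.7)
The strategy is to apply Proposition~\ref{Heyting general} to $\mathscr{V}$, obtaining the PO system $P=\mathscr{V}_{\mathrm{cji}}$ and the associated $P$-partition $\mathscr{Y}=\{Y_{U}\mid U\in P\}^{*}$ of $W$, and then handle the two directions separately: for sufficiency of the basis property I would show that $\mathscr{Y}$ is actually a trim $P$-partition (so $W$ is primitive by Theorem~\ref{Primitive=00003Dtrim}); for necessity I would exploit the orbit structure of the canonical trim partition $\mathscr{X}(W)$ to exhibit enough cji elements.

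Suppose first that $\mathscr{V}_{\mathrm{cji}}$ is a basis for $\mathscr{V}$. Then for every $A\in R=\Co(W)$, $I(A)\in\mathscr{V}$ is a union of elements of $P$, so by Proposition~\ref{Heyting general}\ref{enu:cji3} $A$ is a disjoint union of supertrim sets. This gives T1 at once (supertrim sets form a neighbourhood base at every point), and T3 is Proposition~\ref{Heyting general}\ref{enu:cji2}. For T2, given $U\in\widehat{P}$ and $w\in W$ with a neighbourhood base of $U$-trim sets, pick a $U$-trim neighbourhood $A$ of $w$ and then (using T1) a $V$-supertrim $C\subseteq A$ with $w\in C$; then $V\geqslant U$ in the PO since $C\subseteq A$. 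If $V\subsetneq U$ as open sets (i.e.\ $V>U$ strictly), then $V\subseteq U^{<}$ because $V\in P$, so choosing a $U$-trim $A'\subseteq V$ gives $A'\cap Y_{U}\subseteq V\cap Y_{U}=\emptyset$, contradicting $U\in T(A')$. Hence $V=U$, forcing $w\in U$; the same contradiction applied to any $V''\in P$ with $w\in V''\subsetneq U$ rules out $w\in U^{<}$, so $w\in Y_{U}$. Thus $\mathscr{Y}$ is a trim $P$-partition and $W$ is primitive.

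Conversely, suppose $W$ is primitive, and let $\mathscr{X}(W)$ be the canonical trim partition, so that trim sets coincide with PI sets (Theorem~\ref{Thm:canonical trim}). Any $I\in\mathscr{V}$ is a union of compact opens contained in $I$, each of which decomposes into PI pieces by primitivity, so $I=\bigcup\{B\text{ PI}\mid B\subseteq I\}$. Since $I(B)\subseteq I$ for each such $B$, it suffices to prove $I(B)$ is cji in $\mathscr{V}$. The key observation is that if $A,B\in R$ are both $p$-trim in $\mathscr{X}(W)$, then by TP3 and Corollary~\ref{Thm: orbit diagram} they meet exactly the same orbits $\{X(J)\mid J\in\id(\mathscr{S}(W)),\,p\in J\}$; since elements of $\mathscr{V}\subseteq\Inv(W)$ are unions of orbits, any $\mathscr{V}$-element containing $B$ contains all these orbits and hence contains $A$, so $I(A)=I(B)$ depends only on $p$, and we write $I_{p}$ for this common value. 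To show $I_{p}$ is cji, if $I_{p}\subseteq\bigcup_{j}V_{j}$ with $V_{j}\in\mathscr{V}$, compactness of any chosen $p$-trim $A$ gives $A\subseteq V_{j_{1}}\cup\cdots\cup V_{j_{n}}$, whence we may write $A=C_{1}\dotplus\cdots\dotplus C_{n}$ with $C_{i}\in R$ and $C_{i}\subseteq V_{j_{i}}$; iterating the PI property of $A$ produces some $i_{0}$ with $[C_{i_{0}}]=p$, and then $I_{p}=I(C_{i_{0}})\subseteq V_{j_{i_{0}}}$.

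The main obstacle is the cji step in the converse direction, and specifically the orbit-theoretic observation that $I(A)$ depends only on $[A]$ for PI $A$: this requires combining TP3 (which characterises the ideals realised inside a trim set) with Corollary~\ref{Thm: orbit diagram} (which identifies the orbits of $W$ with the elements of $\id(\mathscr{X}(W))$) and the invariance of $\mathscr{V}$. Once $I_{p}$ is well-defined, both the compactness plus PI-decomposition argument establishing its cji-ness, and the verification of trim axioms in the sufficiency direction, are routine.
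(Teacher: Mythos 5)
Your proof is correct, and the overall skeleton matches the paper's: both directions hinge on Proposition~\ref{Heyting general} for sufficiency (with T1 from the basis property plus part (d), T3 from part (b), and an explicit fullness check that is essentially identical to the paper's), and on showing that $I(A)$ is completely join irreducible for every PI compact open $A$ for necessity. The one place where you take a genuinely different route is the key invariance step in the converse direction. The paper, after splitting $A=A_{1}\dotplus A_{2}$ with $A_{i}\subseteq U_{i}$ and using pseudo-indecomposability to assume $(A)\cong(A_{1})$, simply cites Touraille's Lemma~2.2(1) to conclude $A\subseteq U_{1}$ from the invariance of $U_{1}$. You instead prove the needed fact internally: two $p$-trim sets in $\mathscr{X}(W)$ meet exactly the orbits $X(J)$ with $p\in J$ (one inclusion from TP3, the other because $p\in I_{w}$ for every $w$ in a $p$-trim neighbourhood), and since members of $\mathscr{V}\subseteq\Inv(W)$ are unions of orbits by Corollary~\ref{Thm: orbit diagram}, any such member containing one $p$-trim set contains them all, so $I(A)=I_{p}$ depends only on $p=[A]$ (Theorem~\ref{Thm:canonical trim}). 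This buys a self-contained argument that stays entirely within the paper's partition machinery at the cost of invoking the heavier orbit-diagram corollary; the paper's version is shorter but leans on an external lemma. Your $n$-fold iteration of the PI property in place of the paper's reduction to the two-set case is an inessential variant, and it is worth noting (as your iteration in fact delivers) that the distinguished piece $C_{i_{0}}$ is not merely of class $p$ but satisfies $(C_{i_{0}})\cong(A)$, hence is itself PI and therefore $p$-trim, which is what legitimises $I(C_{i_{0}})=I_{p}$.
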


\begin{proof}
Suppose first that $P=\mathscr{V}_{\mathrm{cji}}$ is a basis for
$\mathscr{V}$. Then by Proposition~\ref{Heyting general}, there
is a relation $<$ such that $(P,<)$ becomes a PO system, and a $P$-partition
$\mathscr{Y}=\{Y_{U}\mid U\in P\}^{*}$ of $W$ which is a trim partition
(as supertrim sets are trim), subject to checking that $\mathscr{Y}$
is full. 

But if $w\in W$ has a neighbourhood base of $U$-trim sets for some
$U\in P$, then $w\in U$: as if $w\in A$ and $A\in\Trim_{U}(\mathscr{Y})$,
then we can find a $V$-supertrim set $B$, say, for some $V\in P$,
and a $U$-trim set $C$ such that $w\in C\subseteq B\subseteq A$
and $B\subseteq V$; hence $U=V$ and $w\in U$. Moreover, $w\notin U^{<}$
as otherwise we could find a $U$-trim subset of $U^{<}$; hence $w\in Y_{U}$
and $\mathscr{Y}$ is full. So $W$ admits a trim partition and is
primitive.

Suppose now that $W$ is primitive. We will show, as per~\cite[Proposition~2.7]{Touraille primitive},
that if $A\in R$ is PI then $I(A)$ is cji, where $I(A)$ denotes
the smallest element of $\mathscr{V}$ containing~$A$. By compactness,
it is enough to show that if $I(A)\subseteq U_{1}\cup U_{2}$, where
$U_{i}\in\mathscr{V}$, then $I(A)\subseteq U_{i}$ for $i=1$ or
$2$. As usual, we can write $A=A_{1}\dotplus A_{2}$, where $A_{i}\in U_{i}$,
and we may assume that $A\cong A_{1}$. But $U_{1}$ is invariant
under homeomorphisms of $W$, and so $A\in U_{1}$ by~\cite[Lemma~2.2(1)]{Touraille primitive};
hence $I(A)\subseteq U_{1}$ as required. 

It follows easily that if $U\in\mathscr{V}$, then $U=\bigcup\{I(A)\mid A\in\PI(R),A\subseteq U\}$,
and hence that $\mathscr{V}_{\mathrm{cji}}$ is a basis for $\mathscr{V}$.
\end{proof}
\begin{remark}
Touraille's result, translated into topological language, concerned
the case where $\mathscr{V}$ is the smallest complete Heyting{*}
subalgebra of $\mathscr{H}(W)$ and where $W$ is compact. The following
extension of Touraille's final result~\cite[Proposition~4.2]{Touraille primitive}
completes the link between the Heyting{*} algebra and topological
Boolean algebra viewpoints of primitive spaces, and is the dual result
to that of Hansoul (our Theorem~\ref{Fi=00003Dprime TBA}).
\end{remark}

\begin{corollary}[Touraille]
If $W$ is a primitive $\omega$-Stone space, then $\Inv(W)$ is
the smallest complete Heyting{*} subalgebra of $\mathscr{H}(W)$.
\end{corollary}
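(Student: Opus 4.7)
The plan is to establish both inclusions of $\mathscr{V} = \Inv(W)$, where $\mathscr{V}$ denotes the smallest complete Heyting{*} subalgebra of $\mathscr{H}(W)$. The easy direction $\mathscr{V} \subseteq \Inv(W)$ is immediate, since $\Inv(W)$ is itself a complete Heyting{*} subalgebra of $\mathscr{H}(W)$: every homeomorphism of $W$ commutes with $\bigcup$, $\bigcap$ (hence~$^o$), $\rightarrow$, and~$'$ (hence~$^*$). For the reverse inclusion, the preceding theorem gives that $P := \mathscr{V}_{\mathrm{cji}}$ is a basis for $\mathscr{V}$, and Proposition~\ref{Heyting general} produces an associated trim $P$-partition $\mathscr{Y} = \{Y_U \mid U \in P\}^*$ of~$W$. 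By Corollary~\ref{Cor:morphism to canonical} there is a surjective morphism $\beta\colon P \to \mathscr{S}(W)$ with $\mathscr{Y} \prec_\beta \mathscr{X}(W)$, so $Y_U \subseteq X_{U\beta}$ for each $U \in P$.

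The first step is to show $\beta$ is injective, and hence a PO-system isomorphism. Each $U \in P \subseteq \mathscr{V} \subseteq \Inv(W)$ is homeomorphism-invariant, and so is $Y_U = U - U^<$. But each $X_p$ is a single orbit (by Corollary~\ref{Thm: orbit diagram}, $X_p = Y(p_\downarrow) \in \mathscr{O}(W)$), so the non-empty invariant set $Y_U \subseteq X_{U\beta}$ must equal $X_{U\beta}$; distinct $U \in P$ therefore correspond to distinct elements of $\mathscr{S}(W)$.

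The main step is to identify, for each $U \in P$ with $p := U\beta$, the set $U$ with $I_{p_\uparrow} := \{w \in W : p \in I_w\}$. A routine argument using TP3 shows that $I_{p_\uparrow}$ is invariant, open (since a $p$-trim neighbourhood $A$ of $w$ makes $p \in I_v$ for every $v \in A$), contains~$X_p$, and equals $\bigcup\{A : A \text{ is } p\text{-trim in } \mathscr{X}(W)\}$. The inclusion $U \subseteq I_{p_\uparrow}$ follows because $U$ is a union of $U$-supertrim sets by Proposition~\ref{Heyting general}\ref{enu:cji4}, each of which is $p$-trim in $\mathscr{X}(W)$ via~$\beta$. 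For the reverse inclusion, establish the key lemma that $I_{p_\uparrow}$ is the smallest invariant open containing~$X_p$: given $V \in \Inv(W)$ with $X_p \subseteq V$, take $w \in I_{p_\uparrow}$, so $w \in Y(I_w)$ with $p \in I_w$; by TP3 applied to the ideal $I_w$, every $p$-trim neighbourhood of any $v \in X_p$ meets $Y(I_w)$, giving $X_p \subseteq \overline{Y(I_w)}$. If $w \notin V$, then $Y(I_w) \subseteq W - V$ by invariance of $W - V$, so $\overline{Y(I_w)} \subseteq W - V$, contradicting $X_p \subseteq V$. Applying this with $V = U$ yields $I_{p_\uparrow} \subseteq U$, hence $U = I_{p_\uparrow}$.

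Finally, given $I \in \Inv(W)$, write $I = \bigcup\{A \in \PI(\Co(W)) : A \subseteq I\}$ using primitivity and that compact opens form a basis. For each such $A$, invariance of $I$ together with $X_{t(A)} \cap A \neq \emptyset$ gives $X_{t(A)} \subseteq I$, and the key lemma gives $I_{(t(A))_\uparrow} \subseteq I$; since also $A \subseteq I_{(t(A))_\uparrow}$, we obtain $I = \bigcup_A I_{(t(A))_\uparrow}$, which lies in $\mathscr{V}$ as each $I_{p_\uparrow}$ is an element of $P \subseteq \mathscr{V}$ and $\mathscr{V}$ is closed under~$\bigcup$. The main obstacle is the key lemma: proving $I_{p_\uparrow}$ is the smallest invariant open set containing $X_p$, via the density argument $X_p \subseteq \overline{Y(I_w)}$ that rests on TP3. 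Everything else is structural bookkeeping using results already established.
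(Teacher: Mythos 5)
Your proof is correct, but it takes a genuinely different route from the paper's. The paper stays entirely inside the partition $\mathscr{Y}=\{Y_{U}\mid U\in P\}^{*}$ supplied by Proposition~\ref{Heyting general}: it shows that each cji element $V$ equals $\bigcup\Trim_{V}(\mathscr{Y})$ and that any $U\in\Inv(W)$ equals $\bigcup\{V\mid V\in T_{\mathscr{Y}}(U)\}$, in both cases deducing ``an invariant open containing one $V$-trim set contains them all'' from TP6 together with Touraille's Lemma~2.2(1) (homeomorphic compact opens lie in the same invariant opens). You instead pass to the canonical partition via the morphism $\beta\colon P\rightarrow\mathscr{S}(W)$, identify each $U\in P$ with $I_{(U\beta)_{\uparrow}}=\bigcup\Trim_{U\beta}(\mathscr{X}(W))$, and replace Touraille's lemma by an orbit-theoretic density argument: $X_{p}\subseteq\overline{Y(I_{w})}$ whenever $p\in I_{w}$ (via TP3), so the smallest invariant open containing the orbit $X_{p}$ is exactly $I_{p_{\uparrow}}$. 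What your approach buys is independence from the external citation to Touraille's Lemma~2.2(1) and an explicit description of the cji elements of $\mathscr{V}$ as the sets $I_{p_{\uparrow}}$ for $p\in\mathscr{S}(W)$ (your injectivity observation $Y_{U}=X_{U\beta}$ even shows $P\cong\mathscr{S}(W)$ as a by-product); what it costs is a heavier reliance on the orbit-diagram machinery of Corollary~\ref{Thm: orbit diagram}. The individual steps all check out: $Y_{U}$ is invariant because $U$ and $U^{<}$ are, $X_{p}$ is a single orbit, the supertrim sets are $p$-trim in $\mathscr{X}(W)$ by regularity of the refinement, and surjectivity of $\beta$ guarantees that every $I_{p_{\uparrow}}$ occurring in your final union is an element of $P\subseteq\mathscr{V}$. (One small presentational point: trimness of $\mathscr{Y}$ comes from the proof of the preceding theorem rather than from Proposition~\ref{Heyting general} itself, which only delivers a $P$-partition with the supertrim properties.)
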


\begin{proof}
Let $\mathscr{V}$ be the smallest complete Heyting{*} subalgebra
of $\mathscr{H}(W)$, so that $\mathscr{V}\subseteq\Inv(W)$; let
$P=\mathscr{V}_{\mathrm{cji}}$ denote the cji elements of $\mathscr{V}$,
and let $\mathscr{Y}=\{Y_{V}\mid V\in P\}^{*}$ be the trim $P$-partition
of $W$ defined in Proposition~\ref{Heyting general}. If $V\in P$,
then $A\subseteq V$ for all $A\in\Trim_{V}(\mathscr{Y})$ (using~TP6
and~\cite[Lemma~2.2(1)]{Touraille primitive}) and so all $V$-trim
sets are supertrim; hence $V=\bigcup\Trim_{V}(\mathscr{Y})$, by Proposition~\ref{Heyting general}\ref{enu:cji4}.

For $U\in\Inv(W)$, let $J(U)=\bigcup\{\bigcup\Trim_{V}(\mathscr{Y})\mid V\in T_{\mathscr{Y}}(U)\}$;
we claim that $J(U)=U$. For if $A$ is a compact open subset of $U$,
then we can write $A=A_{1}\dotplus\cdots\dotplus A_{n}$, where each
$A_{i}\in\Trim(\mathscr{Y})$, and so $U\subseteq J(U)$. Conversely,
if $V\in T_{\mathscr{Y}}(U)$ then we can find $A\in\Trim_{V}(\mathscr{Y})$
such that $A\subseteq U$; and now if $B\in\Trim_{V}(\mathscr{Y})$
then $B\cong A$ (TP6) and so $B\subseteq U$ (using~\cite[Lemma~2.2(1)]{Touraille primitive});
hence $J(U)=U$ as claimed.

Therefore for $U\in\Inv(W)$, we have $U=\bigcup\{V\mid V\in T_{\mathscr{Y}}(U)\}\in\mathscr{V}$,
as required.
\end{proof}

\paragraph{Declarations:}
\begin{description}
\item [{Ethical~approval}] Not applicable
\item [{Competing~interests}] Not applicable
\item [{Authors\textquoteright ~contributions}] Not applicable
\item [{Availability~of~data~and~materials}] Not applicable. 
\item [{Funding}] No funding was received for conducting this study or
to assist with the preparation of this manuscript.
\end{description}


\begin{thebibliography}{10}
\bibitem{Apps-Stone}Apps, A.\,B.: Stone space partitions indexed
by a poset. Algebra Universalis \textbf{84}, 18 (2023)

\bibitem{FlumZiegler}Flum, J., Ziegler, M.: Topological model theory
(vol. 769). Springer. (2006) 

\bibitem{Hanf}Hanf, W.: Primitive Boolean algebras. Proc. Sympos.
Pure Math. \textbf{25}, 75--90 (1974)

\bibitem{Hansoul}Hansoul, G.: Primitive Boolean algebras: Hanf and
Pierce reconciled. Algebra Universalis \textbf{21}(2), 250--255 (1985)

\bibitem{Hansoul2}Hansoul, G.: Alg\`ebres de Boole primitives. Discrete
Math. \textbf{53}, 103--116 (1985)

\bibitem{Hansoul3}Hansoul, G.: Shuffle products and other operations
in the class of primitive spaces. Period. Math. Hungar. \textbf{32}(1--2),
45--56 (1996)

\bibitem{Myers}Myers, D.: Lindenbaum-Tarski algebras. In: Monk, J.\,P.\ and
Bonnet, R.\ (eds.) Handbook of Boolean algebras, vol.\ 3., pp.\ 1167--1195.
North-Holland, Amsterdam (1989)

\bibitem{Naturman}Naturman, C. \,A.: Interior algebras and topology.
Ph.\,D Thesis (1991). https://doi.org/10.13140/RG.2.2.33400.96001

\bibitem{Pierce}Pierce, R.\,S.: Compact zero-dimensional metric
spaces of finite type. Mem.\ Amer.\ Math.\ Soc. \textbf{130} (1972)

\bibitem{PierceMonk}Pierce, R.\,S.: Countable Boolean algebras.
In: Monk, J.\,P.\ and Bonnet, R.\ (eds.) Handbook of Boolean algebras,
vol.\ 3., pp.\ 775--876. North-Holland, Amsterdam (1989)

\bibitem{Touraille Heyting}Touraille, A.: Heyting{*} algebras, topological
boolean algebras and PO systems. Algebra Universalis \textbf{24},
21--31 (1987)

\bibitem{Touraille primitive}Touraille, A.: A characterization of
primitive Boolean algebras. Algebra Universalis \textbf{28}, 339--348
(1991)

\bibitem{Williams}Williams, J.: Structure diagrams for primitive
Boolean algebras. Proc. Amer. Math. Soc. \textbf{47}(1), 1--9 (1975)
\end{thebibliography}
\end{document}